\theoremstyle{plain}
\newtheorem{theorem}{Theorem}[section]
\newtheorem{lemma}[theorem]{Lemma}
\newtheorem{proposition}[theorem]{Proposition}
\newtheorem{corollary}[theorem]{Corollary}
\newtheorem{question}[theorem]{Question}
\newtheorem{conjecture}[theorem]{Conjecture}
\newtheorem{conjecture/question}[theorem]{Conjecture/Question}
\newtheorem{problem}[theorem]{Problem}
\theoremstyle{definition}
\newtheorem{definition}[theorem]{Definition}
\newtheorem{remark}[theorem]{Remark}
\newtheorem{remarks}[theorem]{Remarks}
\DeclareMathOperator{\length}{length}
\DeclareMathOperator{\area}{area}
\DeclareMathOperator{\volume}{volume}
\DeclareMathOperator{\genus}{genus}
\DeclareMathOperator{\inte}{int}
\DeclareMathOperator{\injrad}{injrad}
\DeclareMathOperator{\vol}{vol}
\newcommand{\BH}{\mathbb H}
\newcommand{\BR}{\mathbb R}
\newcommand{\BN}{\mathbb N}
\newcommand{\BZ}{\mathbb Z}
\newcommand{\mF}{\mathcal{F}}
\newcommand{\mG}{\mathcal{G}}
\newcommand{\mH}{\mathcal{H}}
\newcommand{\mT}{\mathcal{T}}
\theoremstyle{definition}
\def\fnum{equation} 
\newtheorem{Thm}[\fnum]{Theorem}
\numberwithin{equation}{section}
\newcommand{\nn}{{\bf{n}}}
\newcommand{\Ric}{{\text{Ric}}}
\def\RR{{\mathbf R}}
\begin{document}

\title{Geometric methods in Heegaard theory}

\author{Tobias Holck Colding}\address{Department of Mathematics\\ MIT\\Cambridge, MA 02139-4307}

\author{David Gabai}\address{Department of Mathematics\\Princeton
University\\Princeton, NJ 08544}

\author{Daniel Ketover}


\thanks{Version 0.59 February 2, 2020\\The first author was partially supported by NSF Grants DMS 1404540,  DMS 181214, FRG DMS 0854774, the second by DMS-1006553, DMS-1607374 and NSF FRG grant DMS-0854969 and the third by an NSF Postdoctoral Fellowship.}
 
 \email{colding@math.mit.edu, gabai@math.princeton.edu and dketover@math.princeton.edu}
\begin{abstract} We survey some recent geometric methods for studying Heegaard splittings of 3-manifolds \end{abstract}

\maketitle

\setcounter{section}{-1}

\section{Introduction}\label{S0} A \emph{Heegaard splitting} of a closed orientable 3-manifold $M$ is a decomposition of $M$ into two handlebodies $H_0$ and $H_1$  which intersect exactly along their boundaries.   This way of thinking about 3-manifolds (though in a slightly different form) was discovered by Poul Heegaard in his  forward looking 1898 Ph. D. thesis \cite{Hee1}.  He wanted to classify 3-manifolds via their diagrams.  In his words\footnote{See the first paragraph of chapter 11 of \cite{Hee1}}:   \emph{Vi vende tilbage til Diagrammet.  Den Opgave,  der burde l\o ses, var at reducere det til en Normalform; det er ikke lykkedes mig at finde en saadan, men jeg skal dog frems\ae tte nogle Bem\ae rkninger angaaende Opgavens L\o sning.} ``We will return to the diagram.  The problem that ought to be solved was to reduce it to the normal form; I have not succeeded in finding such a way but I shall express some remarks about the problem's solution."  For more details and a historical overview see \cite{Go}, and \cite{Zie}.  See also the French translation \cite{Hee2}, the English translation \cite{Mun} and the partial English translation \cite{Prz}.  See also the encyclopedia article of Dehn and Heegaard \cite{DH}.

In his 1932 ICM address in Zurich \cite{Ax}, J. W. Alexander  asked  \emph{to determine in how many essentially different ways a canonical region can be traced in a manifold} or in modern language \emph{how many different isotopy classes are there for splittings of a given genus}.  He viewed this as a step towards Heegaard's program. 

Most of this survey paper is about the geometric and topological techniques that have been recently used towards answering Alexander's problem for irreducible splittings of non Haken hyperbolic manifolds.    In particular we give the key ideas behind the proofs of the following results.

\begin{theorem}\cite{CG}  If $M$ is a closed non-Haken 3-manifold, then there exists an effective algorithm to produce a finite list of Heegaard surfaces which contains all the irreducible Heegaard splittings up to isotopy.\end{theorem}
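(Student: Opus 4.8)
The plan is to combine Thurston's geometrization, the Casson--Gordon weak reducibility theorem, min--max minimal surface theory, and normal surface theory. First I would reduce to the case that $M$ is hyperbolic: if $M$ is reducible, Haken's lemma turns every Heegaard splitting into a connected sum of splittings of the prime factors, and the remaining non-Haken geometric manifolds --- $S^2\times S^1$, elliptic manifolds, and small Seifert fibered spaces --- have their Heegaard splittings classified (Waldhausen, Bonahon--Otal, Boileau--Otal, Moriah--Schultens); each of these possibilities is algorithmically recognizable, and a prime atoroidal $M$ that is none of them is hyperbolic with an effectively computable metric. By the Casson--Gordon theorem a weakly reducible Heegaard splitting of a non-Haken $3$-manifold is reducible, so every \emph{irreducible} Heegaard splitting of $M$ is in fact \emph{strongly irreducible}. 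It then suffices to produce (i) an effective bound $g_0=g_0(M)$ on the genus of strongly irreducible Heegaard surfaces of $M$, and (ii) for each $g\le g_0$ an effective finite list of surfaces containing every strongly irreducible Heegaard surface of genus $g$ up to isotopy.

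For (i) I would pass to minimal surfaces. Given a strongly irreducible Heegaard surface $\Sigma$ of genus $g$, the associated sweepout has a min--max width realized --- by the Pitts--Rubinstein program, made rigorous in the work of Colding--De Lellis, De Lellis--Pellandini, Ketover and others --- by a smooth closed embedded minimal surface $S\subset M$ of index at most $1$ such that either $S$ is isotopic to $\Sigma$, or $\Sigma$ is recovered from a one-sided minimal surface by doubling a tubular neighborhood and adding one vertical handle; in every case $\genus(\Sigma)\le\genus(S)+1$. Hence (i) follows once one bounds the genus of closed embedded (two- or one-sided) minimal surfaces in $M$ of index at most $1$, i.e.\ once one proves there is an effective $C(M)$ with $\genus(S)\le C(M)$.

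This last point is the geometric heart of the argument. I would use the Margulis thick--thin decomposition $M=M_{\ge\epsilon_0}\cup\mathcal{T}$, where $\mathcal{T}$ is a union of Margulis tubes whose number is at most $\vol(M)/c$ for a universal $c>0$. On the thick part the ambient geometry is uniformly controlled, and since $\mathrm{index}(S)\le 1$ the surface becomes stable after deleting a single small ball $B$ where the index concentrates; Schoen's curvature estimates then bound $|A_S|$ on $S\setminus B$, so monotonicity and the bound on $\vol(M)$ bound $\area(S\cap M_{\ge\epsilon_0})$, while the Gauss equation ($K_S\le -1$, so $|K_S|$ bounded on the stable part) together with a local estimate on $B$ bounds the genus of $S\cap M_{\ge\epsilon_0}$ via Gauss--Bonnet. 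Inside a Margulis tube the ambient geometry degenerates, and the hard part is to show that an index-$\le 1$ minimal surface meets each tube in a topologically and geometrically controlled piece, so that the tubes contribute only a bounded amount of area and genus; the leverage here is again the index bound, via second-variation (``stacking'') arguments showing that too much essential intersection with a long thin tube would force index at least $2$. Since the Margulis constant, $\vol(M)$, the lengths of the short geodesics and the constants in Schoen's estimates are all computable, this yields an effective $g_0(M)$.

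With the genus bounded, (ii) is handled by normal surface theory: fixing a triangulation of $M$, every strongly irreducible Heegaard surface is isotopic to an almost-normal surface (Rubinstein, Stocking), and for each $g\le g_0$ the theory of almost-normal surfaces gives an effectively computable finite list of candidates containing all of them up to isotopy --- alternatively one can use the finitely many minimal surfaces of bounded genus and index produced above. Discarding the candidates that are not Heegaard surfaces (a decidable test) and amalgamating the lists over $g\le g_0$ produces the required finite list. I expect the main obstacle to be the thin-part analysis in the previous paragraph: quantitatively controlling how an index-$\le 1$ minimal surface can penetrate a long Margulis tube, where the ambient hyperbolic geometry is nearly degenerate and where min--max theory, the curvature estimates, and hyperbolic geometry must be made to interact effectively.
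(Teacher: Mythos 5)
Your reduction steps match the paper: geometrization plus Casson--Gordon reduces to bounding the genus of strongly irreducible splittings of a hyperbolic non-Haken $M$, and the Pitts--Rubinstein conjecture (proved in \cite{KLS}) reduces that to bounding the genus of closed embedded minimal surfaces of index $\le 1$. The gap is in the way you propose to get that genus bound. You write that Schoen's curvature estimates on the stable part together with ``monotonicity and the bound on $\vol(M)$'' bound $\area(S\cap M_{\ge\epsilon_0})$, and then Gauss--Bonnet gives the genus bound. This does not work: monotonicity gives a \emph{lower} bound on area near a point of $S$, not an upper bound, and bounded $|A_S|$ plus bounded ambient volume do not bound the area of $S$ --- the surface can consist of many nearly parallel, nearly flat sheets stacked through the same region. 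That is precisely what happens: Theorem \ref{t:t2} shows a sequence of index-$\le 1$ minimal surfaces in a fixed hyperbolic $3$-manifold subconverges to a lamination, which is only possible because the areas (equivalently, by Lemma \ref{hyperbolicareabound} and Gauss--Bonnet, the genera) of such surfaces are in general unbounded. So there is no direct geometric genus bound for index-$\le 1$ minimal surfaces, and the thick part is already where your argument breaks, not just the Margulis tubes.

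The paper's proof of Theorem \ref{main1} goes around this. Rather than bounding area, Theorem \ref{t:t4} produces finitely many effectively constructible $\eta$-negatively curved branched surfaces carrying all the index-$\le 1$ minimal surfaces; then one shows that any Heegaard surface $S$ carried by such a $B_i$ whose normal coordinates $n_1,\dots,n_q$ (with respect to the fundamental solutions of the normal surface equations of $B_i$) include a very large entry is \emph{weakly reducible}. The leverage is a polynomial-versus-exponential growth dichotomy: leaves of measured laminations carried by $B_i$ have polynomial growth (Theorem \ref{plante}), while an $\eta$-negatively curved leaf has exponential intrinsic growth, so after a controlled regular splitting (Proposition \ref{r splitting}) one finds a $\pi_1$-injective pair of pants $P\subset\partial_h N(B)$, and then compressing-disc/monogon arguments in the interstitial bundle over $P$ produce disjoint compressing discs on opposite sides of $S$. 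This yields an effectively computable genus bound $G(N)$ without ever bounding area. With that bound in hand, the almost-normal surface enumeration you describe for step (ii) is fine and matches the paper.
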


This result is an effective version of theorems of Tao Li  \cite{Li2}, \cite{Li3}.  To essentially answer Alexander's question for irreducible splittings one must root out the reducible splittings and duplicate splittings from this list.  Achieving that gives the following. 

\begin{theorem} \cite{CGK} Let $N$ be a closed non-Haken hyperbolic 3-manifold. There exists an effectively constructible set $S_0, S_1, \cdots, S_n$ such that if $S$ is an irreducible Heegaard splitting, then $S$ is isotopic to exactly one $S_i$.\end{theorem}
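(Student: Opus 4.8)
Starting from the finite list furnished by Theorem~0.1, the strategy is to produce a list containing exactly the irreducible splittings, each once, by passing to minimal-surface representatives and making the remaining bookkeeping effective.

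\emph{Step 1: a genus bound, and irreducible $=$ strongly irreducible.} By Theorem~0.1 one effectively produces a finite list of Heegaard surfaces that contains, up to isotopy, every irreducible Heegaard splitting of $N$; let $g$ be the largest genus occurring, so every irreducible splitting has genus $\le g$. Since $N$ is non-Haken it has no closed essential surface, so by the theorem of Casson--Gordon every weakly reducible Heegaard splitting of $N$ is reducible; equivalently, every irreducible Heegaard splitting of $N$ is strongly irreducible. Strong irreducibility (and hence its negation, reducibility) is algorithmically decidable: one searches, via normal surface theory in the two handlebodies, for a weak reducing pair of disks. This is what lets one discard the reducible members promised in the statement.

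\emph{Step 2: minimal representatives and an effectively enumerated pool.} Equip $N$ with its hyperbolic metric (unique by Mostow rigidity, computable to any precision). By the theorem of Pitts--Rubinstein, made rigorous by Colding--De~Lellis and by Ketover, every strongly irreducible Heegaard surface of genus $\le g$ is isotopic either to an embedded minimal surface of Morse index at most one, or to the boundary of a twisted $I$-bundle over a one-sided minimal surface with a thin tube attached. For such a minimal surface $\Sigma$ the Gauss equation gives $K_\Sigma\le -1$, so Gauss--Bonnet yields the a priori area bound $\area(\Sigma)\le 4\pi(g-1)$; combined with Schoen-type curvature estimates for minimal surfaces of bounded index this forces a uniform bound on $|A_\Sigma|$, so $\Sigma$ has bounded geometry. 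Consequently $\Sigma$ is isotopic, in a fixed sufficiently fine triangulation of $N$, to an almost normal surface (Rubinstein, Stocking) of weight $\le W_0$ for an effective constant $W_0=W_0(N,g)$. Enumerating the finitely many almost normal surfaces of weight $\le W_0$ and keeping those that are strongly irreducible Heegaard surfaces of genus $\le g$ (both properties being decidable) produces an effective finite list $\Sigma_1,\dots,\Sigma_k$ of irreducible Heegaard splittings such that every irreducible Heegaard splitting of $N$ is isotopic to some $\Sigma_j$.

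\emph{Step 3: removing duplicates.} It remains to decide, for $1\le a<b\le k$, whether $\Sigma_a$ and $\Sigma_b$ are isotopic; picking one $\Sigma_j$ per isotopy class then gives the list $S_0,\dots,S_n$, and the conclusion (every irreducible $S$ isotopic to exactly one $S_i$) follows from Step~2. This is the heart of the argument, and the geometric input is used again here: after replacing $\Sigma_a$ and $\Sigma_b$ by minimal representatives of index $\le 1$ and bounded geometry, one argues that if they are isotopic then there is an isotopy between them running through surfaces of effectively bounded weight, so that the existence of such an isotopy reduces to a finite search; equivalently, one establishes an effective bound on how far apart, in $N$, two isotopic strongly irreducible Heegaard surfaces of genus $\le g$ can be pushed. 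I expect Step~3 --- turning the rigidity of the minimal representatives into an honest effective bound on the isotopy complexity --- to be the main obstacle, with making the Pitts--Rubinstein existence statement and the index-one curvature estimates fully effective the other principal technical point.
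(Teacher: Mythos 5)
Your overall framework is right — start from the finite list of Theorem~0.1, pass to minimal representatives via Pitts--Rubinstein, use the hyperbolic area bound $4\pi(g-1)$ and curvature estimates to get a finite effectively computable pool of normal/almost-normal representatives, and then decide isotopy between pairs. And you correctly identify deciding isotopy as the crux. But the proposal stops precisely at the point where the paper's argument begins; two issues stand out.

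First, Step~3 is not an argument but a statement of what one would like to prove. The content of the paper here is the \emph{Thick Isotopy Lemma} (Lemma~8.2/\ref{thin1}): given two $8\delta$-incompressible isotopic Heegaard surfaces, there is an isotopy $\Sigma_t$ between them whose areas are bounded by a computable constant \emph{and} such that every intermediate surface remains $\delta$-incompressible. The area bound alone (which min-max provides, via Lemma~\ref{bounds}: width of any nontrivial genus-$g$ $k$-parameter sweepout in a hyperbolic $3$-manifold is $\le 4\pi(g-1)$) is not enough, since a bounded-area surface can still develop short essential compressions, and it is the $\delta$-incompressibility that lets one transfer the isotopy into the finite category of crudely (almost) normal surfaces of bounded weight. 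The nontrivial step, which you do not gesture at, is the $2$-parameter sweepout argument: one extends the two Heegaard foliations to a sweepout of $I\times I$, pulls it tight by min-max to bound areas by $C=\max\{C_1+1,4\pi(g-1)+1\}$, and then shows that the failure to find a $\delta$-incompressible path in $I\times I$ forces some slice to be $1.5\delta$-bicompressible, hence weakly reducible, hence reducible by Casson--Gordon — contradicting irreducibility. This reduction of ``effective isotopy'' to ``Casson--Gordon plus a two-parameter min-max'' is the main idea and is missing.

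Second, your Step~1 asserts that strong irreducibility is directly algorithmically decidable via a normal-surface search for a weak reducing pair of disks, and proposes to use this to discard reducible members. This is not justified as stated, and it is not how the paper proceeds. Detecting reducibility is handled by a companion result, \emph{Thick Isotopy Lemma~II} (Lemma~\ref{thin2}), which shows that a weakly reducible $8\delta$-locally-incompressible splitting can be isotoped, through area-bounded $\delta$-incompressible surfaces, to a $4\delta$-bicompressible surface (exploiting that a reducible splitting destabilizes to a strongly irreducible one whose Heegaard foliation has leaves of area $<4\pi(g-1)$, again by min-max/Pitts--Rubinstein). Reducibility is then detected as membership in the same component of the graph $\mG$ as an $\eta$-bicompressible vertex, where $\eta$-bicompressibility of a splitting is \emph{locally} detectable (short curves compressing to both sides). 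So both duplication and reducibility are decided through the same graph-connectivity mechanism, driven by the two Thick Isotopy Lemmas, rather than by a direct algorithm for weak reducibility.
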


Both results make crucial use of the recent resolution of the circa 1987 Pitts-Rubinstein conjecture.

\begin{conjecture}[Pitts-Rubinstein]
A strongly irreducible Heegaard surface in a Riemannian three-manifold is either 
\begin{enumerate}
\item isotopic to a minimal surface of index at most $1$ 
\item isotopic after a single compression to the boundary of a tubular neighborhood of a stable one-sided Heegaard surface.  
\end{enumerate}
\end{conjecture}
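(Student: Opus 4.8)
The plan is to run a smooth min-max construction on the one-parameter family of surfaces naturally attached to the splitting, and to extract the dichotomy from the structure of the min-max limit together with the strong irreducibility hypothesis. Fix the Riemannian three-manifold $(M,g)$ and write $M = H_0\cup_S H_1$ with $S$ strongly irreducible. A \emph{Heegaard sweepout} is a continuous family $\{\Sigma_t\}_{t\in[0,1]}$ with $\Sigma_t$ isotopic to $S$ for $t\in(0,1)$, with $\Sigma_0,\Sigma_1$ the spines of $H_0,H_1$, and with the $\Sigma_t$ foliating the complement of those spines. Let $\Lambda$ be the saturation of one such family under ambient isotopies depending continuously on $t$, and put
\[
W(\Lambda)\;=\;\inf_{\{\Sigma_t\}\in\Lambda}\ \sup_{t\in[0,1]}\ \area(\Sigma_t),
\]
which one checks is positive and attained along a min-max sequence.

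First I would apply the Simon--Smith/Colding--De Lellis min-max theorem in the refined form that controls topology (De Lellis--Pellandini, Ketover): there is a min-max sequence whose maximal slices converge as varifolds to a smooth embedded minimal surface $\Gamma=\sum_i n_i\,\Gamma_i$, $n_i\in\BZ_{>0}$, where two-sided components may occur with any multiplicity, one-sided components occur with even multiplicity, the total genus of $\Gamma$ (counted with multiplicity, one-sided pieces contributing their orientable double) is at most $\genus(S)$, and for large $k$ the maximal slice $\Sigma_{t_k}$ is isotopic to the surface obtained from $\Gamma$ by replacing each two-sided $\Gamma_i$ by $n_i$ parallel copies, each one-sided $\Gamma_i$ by the boundary $\partial N(\Gamma_i)$ of its twisted $I$-bundle neighborhood, and then joining everything by finitely many thin tubes. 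Next I would invoke the Marques--Neves index bound for one-parameter min-max to conclude that the two-sided part of $\Gamma$ has Morse index at most $1$, and then the catenoid estimate of Ketover--Marques--Neves to exclude $n_i\ge 2$ on any two-sided component: such a component would allow one to graft a thin catenoidal neck and thereby strictly lower $\sup_t\area(\Sigma_t)$ while staying inside $\Lambda$, contradicting the minimality of $W(\Lambda)$; checking that the grafted family stays in $\Lambda$ is where the isotopy class of $S$ re-enters.

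It then remains to run the topological argument with $n_i=1$ on every two-sided component and $n_i=2$ on every one-sided one. Since $\Sigma_{t_k}\simeq S$ and $S$ is strongly irreducible, $S$ has no pair of disjoint compressing disks on opposite sides, and this is exactly the constraint that forces the blow-up picture of $\Gamma$ into one of two possibilities. Either $\Gamma$ is a single two-sided surface of genus $\genus(S)$ with no essential tubes, so that $S$ is isotopic to the minimal surface $\Gamma$, of index $\le 1$ --- this is alternative (1); here the point is that a genus drop would require pinching tubes on both sides of the maximal slice, i.e.\ disjoint compressing disks on opposite sides, which is forbidden. Or $\Gamma$ has a one-sided component $\tilde\Gamma$ with multiplicity $2$, the blow-up near the collapse is $\partial N(\tilde\Gamma)$ tubed to itself exactly once, and hence $S$ is isotopic to $\partial N(\tilde\Gamma)$ after a single compression; that $\tilde\Gamma$ is a \emph{one-sided Heegaard} surface follows because the single compression uses a disk on one side of $S$ only, so the other handlebody is $M\setminus N(\tilde\Gamma)$, and the \emph{stability} of $\tilde\Gamma$ follows from the general fact that one-sided min-max limits appearing with multiplicity $2$ are stable (otherwise a normal deformation would lower the width) --- this is alternative (2). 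Any disconnectedness of $\Gamma$ or further genus drop would again exhibit disjoint compressions of $S$ on the two sides and is excluded.

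The main obstacle, in my view, is not the existence half of the min-max construction but the two deeper inputs: (a) the genus/isotopy control on the min-max limit, which is what guarantees that the blow-up of $\Gamma$ really is isotopic to $S$ rather than to some surface of strictly smaller complexity; and (b) the catenoid estimate, which is precisely what rules out the spurious alternative in which the width is realized by a doubled two-sided minimal surface. These then have to be combined with a careful accounting of which compressions of $S$ are compatible with strong irreducibility, and with the analysis of the one-sided collapse, including the proof that the limiting one-sided surface is stable. Rubinstein's original heuristic --- sweepouts by almost normal surfaces and piecewise-linear minimal surfaces --- already gives the correct picture; what the smooth min-max machinery, the genus bounds and the catenoid estimate supply is the rigor that the original Almgren--Pitts-based argument lacked.
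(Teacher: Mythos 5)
Your proposal correctly identifies the analytic ingredients that enter (the Simon--Smith/Colding--De Lellis smooth min-max theorem, the Ketover genus bounds, the Marques--Neves one-parameter index bound) and correctly isolates multiplicity and the one-sided case as the two places the topology of $S$ has to be used. However, the central topological claim on which your ``one-shot'' argument rests is false, and this is exactly the difficulty that forces the paper into an iteration rather than a single min-max pass.

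You assert that ``a genus drop would require pinching tubes on both sides of the maximal slice, i.e.\ disjoint compressing disks on opposite sides, which is forbidden,'' and likewise that ``any disconnectedness of $\Gamma$ or further genus drop would again exhibit disjoint compressions of $S$ on the two sides.'' Strong irreducibility only rules out disjoint essential compressions on \emph{opposite} sides; it says nothing about performing several compressions on the \emph{same} side. The min-max sequence can therefore perfectly well degenerate by pinching multiple necks all into $H_1$, say, yielding a disconnected limit $\Gamma=\Gamma_1\cup\cdots\cup\Gamma_k$ of total genus strictly less than $g(S)$, each $\Gamma_i$ appearing with multiplicity one. Nothing in the genus bound, the index bound, or a catenoid-type multiplicity estimate prevents this: those results control multiplicity and total genus from above, not from below. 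So after one pass you may simply be left with a small stable minimal surface deep inside one handlebody, not a surface isotopic to $S$ and not the boundary of a twisted $I$-bundle.

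The paper's route through this is the Pitts--Rubinstein \emph{iteration}. After the first min-max, strong irreducibility and the Multiplicity One proposition (proved there via Scharlemann--Thompson's classification of Heegaard splittings of $\mathrm{surface}\times I$, not a catenoid estimate, though your analytic alternative for that particular point is legitimate) show that all compressions go into one handlebody and all positive-genus two-sided components have multiplicity one. One then excises the handlebodies bounded by the $\Gamma_i$, minimizes the unstable boundary component to obtain a compression body $M''$ with strictly stable minimal boundary, and runs min-max again inside $M''$. The technical heart of the whole argument --- which your proposal does not touch --- is Proposition \ref{stableboundary}: that min-max in a region with strictly stable boundary produces a minimal surface in the \emph{interior}, rather than just regurgitating $\partial M''$ with multiplicity. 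That proposition in turn rests on the interpolation result Proposition \ref{main_deformation} (``mean curvature flow by hand''), which replaces the unavailable mean curvature flow with an explicit area-controlled isotopy collapsing any sphere in a tubular neighborhood of a strictly stable sphere. Finally, termination of the iteration uses bumpiness of the metric to exclude an infinite nested sequence of minimal surfaces of bounded genus. Your one-shot scheme would need none of this machinery, which should have been a warning sign that it cannot be correct as stated.
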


\begin{theorem}[K-Liokumovich-Song \cite{KLS}]
The Pitts-Rubinstein conjecture is true. 
\end{theorem}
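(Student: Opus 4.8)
The plan is to run one-parameter min--max over the sweepout determined by the Heegaard surface, read off sharp genus and index information from the resulting minimal surface, and then convert the analytic conclusions into the stated topological dichotomy by a careful degeneration analysis. Let $M$ be the closed Riemannian three-manifold and let $\Sigma$ be the strongly irreducible Heegaard surface, of genus $g$, splitting $M$ as $H_0\cup_\Sigma H_1$. First I would fix a \emph{Heegaard sweepout}: a continuous family $\{\Sigma_t\}_{t\in[0,1]}$ sweeping out $M$, with endpoints the spines of $H_0$ and $H_1$ and every generic level isotopic to $\Sigma$. Let $\Pi$ be the saturation of this sweepout under ambient isotopies and $W=W(\Pi)>0$ its width. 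By the Simon--Smith refinement of Almgren--Pitts min--max, in the form given by Colding--De Lellis, $W$ is realized by a smooth closed embedded minimal surface $\Gamma=\sum_i n_i\Gamma_i$ with $n_i\in\mathbb N$, the $\Gamma_i$ connected and pairwise disjoint, and $\sum_i n_i\area(\Gamma_i)=W$. Two quantitative inputs constrain $\Gamma$: the genus bound of De Lellis--Pellandini and Ketover, which --- after the bookkeeping forced by one-sided components and by the necks joining nearby sheets --- prevents the creation of genus and bounds the topological complexity of $\partial N(\Gamma)$ by that of $\Sigma$; and the index upper bound of Marques--Neves for one-parameter families (adapted to the Simon--Smith setting), which gives that the reduced surface $\bigcup_i\Gamma_i$ has Morse index at most $1$.

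Next I would turn the varifold convergence into topology. Choosing an optimal sweepout and parameters $t_j$ with $\Sigma_{t_j}\rightharpoonup\Gamma$ as varifolds and every $\Sigma_{t_j}$ isotopic to $\Sigma$: away from the finite set where the second fundamental form concentrates, $\Sigma_{t_j}$ converges smoothly and graphically to a fixed number of sheets over each $\Gamma_i$, while near the concentration points catenoid and ``no neck'' estimates identify the local pictures as thin tubes. Hence, for $j$ large, $\Sigma_{t_j}$ is obtained from a disjoint union of the boundaries $\partial N(\Gamma_i)$ of tubular neighborhoods --- taken with the appropriate number of parallel sheets, each one-sided $\Gamma_i$ being forced to appear as $\partial N(\Gamma_i)$ since the $\Sigma_{t_j}$ are two-sided --- by attaching finitely many thin tubes along disjoint arcs and deleting inessential spheres. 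Thus $\Sigma$ is isotopic to a \emph{pipe surface} assembled from the minimal pieces $\Gamma_i$, and the genus bound says this pipe surface has genus exactly $g$ and admits very few tubes.

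Now strong irreducibility collapses the combinatorics. Compressing a tube attached to a parallel copy of $\Gamma_i$, or to $\partial N(\Gamma_i)$, yields a disk on one side of $\Sigma$; were the pipe surface to involve more than one piece, or a nontrivial tube graph, one would produce disjoint essential disks on the two sides of $\Sigma$, contradicting strong irreducibility --- this is a refinement of the original Pitts--Rubinstein argument combined with Rubinstein--Scharlemann and Bachman style sweepout combinatorics, and it uses that a strongly irreducible Heegaard surface is neither a stabilization nor a nontrivial amalgamation. Working through the cases leaves exactly two possibilities. If the surviving piece $\Gamma_0$ is two-sided, it occurs with multiplicity one and $\genus(\Gamma_0)=g$ (the genus bound gives $\leq g$, and a genuine genus drop would again expose disjoint essential disks on both sides), so $\Sigma$ is isotopic to $\Gamma_0$, which has index at most $1$ by the index bound: this is conclusion (1). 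If $\Gamma_0$ is one-sided, it appears in $\Gamma$ with multiplicity $2$ (the two-sided slices limit onto $\partial N(\Gamma_0)$), the genus count forces $\partial N(\Gamma_0)$ to have genus $g-1$ with exactly one tube attached, so $\Sigma$ becomes $\partial N(\Gamma_0)$ after a single compression; finally $\Gamma_0$ is stable, for an unstable one-sided $\Gamma_0$ would let one isotope the slices near $2|\Gamma_0|$ below $W=2\area(\Gamma_0)$, contradicting that $W$ is the width --- more precisely one invokes the multiplicity-one analysis of Marques--Neves and Zhou, by which a two-sided min--max surface has multiplicity one, so that higher multiplicity can occur only along a one-sided stable surface. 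This is conclusion (2).

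The main obstacle is the middle step. The Colding--De Lellis/Simon--Smith theory a priori gives only varifold convergence, so the real work is to upgrade this --- via curvature estimates on the thick parts, catenoid estimates on the necks, and White-type regularity --- to a statement relating the minimizing slices to $\partial N(\Gamma)$ by \emph{controlled} surgeries with sharp control on the number of tubes, and then to carry out the three-manifold topology that matches the resulting pipe surfaces against strong irreducibility and eliminates every configuration but the two listed. This geometric-to-topological passage, together with the one-sided multiplicity and stability bookkeeping, is exactly the content that the original Pitts--Rubinstein manuscript left incomplete; the genus and index accounting, by contrast, are by now standard.
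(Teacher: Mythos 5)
Your proposal runs a single one-parameter min--max on the Heegaard sweepout, invokes the genus bound, the index bound and the Zhou/Marques--Neves multiplicity-one theorem, and then tries to close the argument by ``pipe surface'' combinatorics against strong irreducibility. This is not the route taken in \cite{KLS}, and as written it has a real gap: a single pass of min--max does not produce the dichotomy, and the topological step you sketch does not hold.

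Concretely, after one min--max one obtains $\Gamma=\sum n_i\Gamma_i$ with only the genus bound \eqref{gb} and the index bound available, and there is no reason the connected surface $\Sigma$ is isotopic to a ``pipe surface'' over the $\Gamma_i$'s with multiplicity-one sheets and a controlled tube graph. Two specific failure points. First, the genus bound is silent about sphere components, and spheres \emph{can} appear with integer multiplicity; the Zhou multiplicity-one theorem applies to the Almgren--Pitts/$\mathbb Z_2$-cycle setting on a closed manifold, not to the Simon--Smith sweepout setting you are using, and in any case the really dangerous multiplicity in this argument occurs on the stable \emph{boundary} of a compression body at a later stage, not on an interior two-sided min--max slice. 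Second, your claim that ``more than one piece, or a nontrivial tube graph, produces disjoint essential disks on the two sides of $\Sigma$'' is not justified. Strong irreducibility forces all compressions to be into a single handlebody, say $H_1$; a min--max limit with several positive-genus components $\Gamma_1,\dots,\Gamma_k$ bounding handlebodies $J_1,\dots,J_k$ on the $H_1$-side, with $V=M\setminus\bigcup J_i$ the remaining compression body, is perfectly consistent with strong irreducibility and does occur. Indeed the proof in \cite{KLS}, following the original Pitts--Rubinstein sketch, \emph{expects} this and deals with it by iterating: one stabilizes the unstable boundary component of $M'=M\setminus\bigcup J_i$ to obtain $M''$ with strictly stable minimal boundary, re-runs min--max inside $M''$, and repeats. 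The two genuinely new ingredients are (i) the genus control for min--max limits (Theorem~\ref{genusbounds}), which you do cite, and (ii) Proposition~\ref{stableboundary} (min--max in a compression body with strictly stable boundary produces an \emph{interior} minimal surface), whose proof rests on the interpolation Proposition~\ref{main_deformation}, a ``mean curvature flow by hand.'' Your proposal contains nothing that replaces (ii), and (ii) is exactly what rules out the degenerate scenario in which the min--max value is achieved only by multiple copies of the stable boundary sphere. Without the iteration and the stable-boundary interpolation, the ``geometric-to-topological passage'' you flag as the main obstacle is left entirely open.
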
 
\vskip 8pt

For excellent earlier surveys on Heegaard theory see  \cite{Zie},\cite{Sc2}, \cite{So}, \cite{SSS}.  See also the bibliography which includes many references to papers not discussed in this survey.
\vskip 10 pt
\noindent\emph{Acknowledgement}  We thank the referee for his/her constructive comments on this paper.

\section{Summary of this paper} 

Section \S2 gives basic definitions and now classical results about Heegaard splittings.  \S3 surveys min-max methods to construct minimal surfaces from $k$-parameter sweepouts.  Section \S4 outlines the recent resolution of the Pitts-Rubinstein conjecture, asserting roughly speaking that strongly irreducible Heegaard surfaces can be isotoped to index at most $1$ minimal surfaces.  Section \S5 outlines the proof that there are finitely many $\eta$-negatively curved branched surfaces that carry all index-$\le 1$ minimal surfaces in hyperbolic 3-manifolds and hence the strongly irreducible Heegaard surfaces in these manifolds.  \S6 is about mean convex foliations.  \S7 and \S8 outline a proof of an effective version of Tao Li's finiteness theorem for irreducible splittings of non-Haken 3-manifolds.  \S9 outlines an effective algorithm for enumerating without duplication the irreducible Heegaard splittings of a hyperbolic non-Haken 3-manifold. In \S10 we state some open questions.  In the Appendix we very briefly mention some speculative thoughts of the first two authors on the geometry of hyperbolic handlebodies $V$ such that  $\partial V$ is an index-1 minimal surface,  $\vol(V)>>0$ and $V$ has a uniformly bounded Cheeger constant.

There are many results and techniques that we do not discuss in this paper, though they appear in the references.  E.g. geometric topological techniques for putting Heegaard splittings into a normal form with respect to some other structure, normal and almost normal surfaces, connections with the curve complex, hierarchies, the Rubinstein - Scharlemann graphic, thin position, Tao Li's work on geometric vs algebraic rank of closed 3-manifolds, and some aspects of minimal surfaces.  In addition there are unmentioned algebraic techniques for distinguishing Heegaard splittings, e.g. invariants of a certain double coset of the mapping class group of the Heegaard surface and Nielsen Equivalence.

\section{Basic Definitions and Facts}  

\begin{definition}  An \emph{effective} algorithm is one that produces an output as a computable function of the initial data.  \end{definition}

It is interesting to note that the algorithms behind the results in this paper are elementary and combinatorial, yet the proofs that they work often require sophisticated geometric arguments, e.g. Theorem \ref{main} requires a 2-parameter sweepout argument and a mulit-parameter min-max argument.
\vskip 10pt

For the definitions of basic notions related to branched surfaces see \cite{O} and \S 1 \cite{CG}.
\begin{definition}  A \emph{Heegaard splitting} of a closed orientable 3-manifold $M$ consists of an ordered pair  $(H_0, H_1)$ of handlebodies whose union is $M$ and whose intersection is their boundaries.  This common boundary $S$ is called a \emph{Heegaard surface}.  Two Heegaard splittings $(H_0, H_1)$, $(H'_0, H'_1)$ are \emph{isotopic} if there exists an ambient isotopy of $M$ taking $H_0$ to $H'_0$.  The \emph{genus} of the splitting is the genus of the splitting surface and the \emph{Heegaard genus} of $M$ is the smallest $g$ for which there is a Heegaard splitting of that genus.  \emph{Stabilization} is the process of increasing a genus-n splitting to a genus-$n+1$ splitting by adding a handle in the obvious manner.  The inverse operation is called \emph{destabilization}.  A splitting is \emph{irreducible} if it is not a stabilization.\end{definition}

\begin{remarks}  i)  In this paper we consider equivalence classes up to isotopy.  One can also consider splittings up to orientation preserving homeomorphism, or up to isotopy or homeomorphism, orientation preserving or not, without regard to whether the sides are preserved.  

ii)  Here are a few results.  The 3-dimensional Schoenflies theorem of Alexander implies that the 3-sphere has a unique splitting of genus-0.  Waldhausen \cite{Wa1} showed that every positive genus Heegaard splitting of the 3-sphere is a stabilization, hence the genus-0 splitting is the unique one up to stabilization.  Note that there is an isotopy that switches the sides of the splitting.  Engmann \cite{Eng} and Birman \cite{Bir} independently first showed that there exist manifolds with non homeomorphic Heegaard splittings, even for allowing the switching of sides.  Bonahon - Otal \cite{BoOt} and Hodgeson - Rubinstein \cite{HR} independently classified Heegaard splittings of lens spaces.  In particular, the genus-1 Heegaard surface is unique up to isotopy, but some manifolds have isotopies that switch sides and some do not.  Further, all higher genus splittings are stabilizations.   \end{remarks}

\noindent\textbf{Existence of Heegaard Splittings} Heegaard \cite{Hee1} proved that every triangulated 3-manifold has a Heegaard splitting, more or less, by showing that after removing a neighborhood of a point it deformation retracts to a 2-complex.  The boundary of a neighborhood of the 1-skeleton of this 2-complex gives a Heegaard surface.  Alternatively, the boundary of the  neighborhood of the 1-skeleton of the triangulation is a Heegaard surface.  More generally Moise \cite{Mo} proved that every 3-manifold has a triangulation and hence a Heegaard splitting.

\vskip 8pt

\noindent\textbf{Uniqueness up to Stabilization}  The Reidemeister-Singer theorem, see \cite{Re}, \cite{Si}, \cite{Cr}, \cite{Sie},\cite{Lau} asserts that given two Heegaard splittings of a closed 3-manifold, then after stabilizing each a finite number of times, the resulting Heegaard splittings are isotopic.  A consequence of this is that associated to a 3-manifold is the \emph{tree of Heegaard splittings}. Here vertices are the isotopy classes of splittings and there is a directed edge from $v$ to $v'$ if $v $ is obtained from $v'$ by a single stabilization.

\vskip 8pt 

\noindent\textbf{Connect Sums}  Haken \cite{Ha3} showed that if $S$ is a Heegaard surface in $M=M_1\#M_2$, then a summing sphere $Q$ can be isotoped to intersect the Heegaard surface $S$ in a single circle and hence after isotopy the Heegaard splitting  \emph{restricts} to a Heegaard splitting of each with Heegaard surfaces $S_1$ and $S_2$ and hence genus is additive under connect sum.  Bachman \cite{Ba2} and Qiu - Scharlemann \cite{QS} further showed that if $S$ is reducible, then one of $S_1$ and $S_2$ is reducible.
\begin{definition} (Casson - Gordon \cite{CaGo1})The Heegaard splitting $\mH= (H_0, H_1)$ is \emph{weakly reducible} if there  exist essential compressing discs $D_i$ for $H_i, \ i=0,1$  such that $\partial D_0\cap \partial D_1=\emptyset$.  The splitting $\mH$ is \emph{strongly irreducible} if it is not weakly reducible.  \end{definition}

The following theorem of Casson - Gordon \cite{CaGo1} plays a central role in this paper.

\begin{theorem}  If $M$ is a closed irreducible 3-manifold with an irreducible, weakly reducible Heegaard splitting, then $M$ has an embedded incompressible surface and hence is Haken.\end{theorem}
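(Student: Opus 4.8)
The plan is to deduce the theorem from the Casson--Gordon dichotomy: \emph{a weakly reducible Heegaard splitting of a closed orientable $3$-manifold is either reducible as a splitting or $M$ contains a two-sided non-spherical incompressible surface.} Granting this, suppose $M$ is irreducible and $(H_0,H_1)$ is an irreducible weakly reducible splitting. If the splitting were reducible there would be a $2$-sphere meeting $S$ in a single essential circle; since $M$ is irreducible this sphere bounds a ball, and the classical argument (take an innermost such ball and slide the corresponding handle) shows the splitting would then be a stabilization, contradicting irreducibility of the splitting. Hence the splitting is not reducible, so by the dichotomy $M$ contains a two-sided non-spherical incompressible surface; since $M$ is irreducible this is precisely the assertion that $M$ is Haken.

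It remains to prove the dichotomy, which I would establish by a maximal weak reduction. Let $\mathcal C$ be the collection of pairs $\Delta=(\Delta_0,\Delta_1)$ in which $\Delta_i$ is a nonempty system of pairwise disjoint, pairwise non-parallel essential discs for $H_i$ and $\partial\Delta_0\cap\partial\Delta_1=\emptyset$; this is nonempty by weak reducibility, and since the number of disjoint non-parallel discs in a handlebody of fixed genus is bounded, we may choose $\Delta\in\mathcal C$ maximizing the number of components of the surface $S_\Delta$ obtained from $S$ by simultaneously surgering along all discs of $\Delta_0\cup\Delta_1$ (legitimate as these discs are disjoint). Then $M=W_0\cup_{S_\Delta}W_1$, where $W_0$ is $H_0$ cut along $\Delta_0$ with $2$-handles attached along the curves $\partial\Delta_1\subset S$, and symmetrically for $W_1$. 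Let $F\subset S_\Delta$ be the union of the non-sphere components.

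Now I would argue in two cases. If $S_\Delta$ has a $2$-sphere component, use irreducibility of $M$ to produce a ball it bounds, take an innermost such ball, and unwind the surgeries to obtain essential discs in $H_0$ and in $H_1$ with the same boundary curve on $S$ — a reduction of the splitting. If $S_\Delta$ has no sphere component, so $F=S_\Delta$, I claim $F$ is incompressible. Given a hypothetical compressing disc $E$ for $F$, say $E\subset W_0$, first make $E$ disjoint from the co-cores of the $2$-handles by innermost-disc surgeries, placing $E$ inside $H_0$ cut along $\Delta_0$; then remove the arcs of $\partial E\cap\Delta_0^{\pm}$ by outermost-arc surgeries on $E$, so that $\partial E$ becomes an essential curve on $S$ disjoint from $\partial\Delta_0\cup\partial\Delta_1$ and $E$ an essential disc for $H_0$ disjoint from $\Delta_0$. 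A short case analysis — whether $E$ is parallel to a disc of $\Delta_0$, and whether $\partial E$ separates $S$ cut along $\partial\Delta_0$ — shows that adjoining $E$ to $\Delta_0$ yields a member of $\mathcal C$ with strictly more surgered components, contradicting maximality, while the remaining sub-case contradicts essentiality of $\partial E$ on $F$; hence $F$ is incompressible, and it is non-empty precisely because the all-sphere situation already falls in the reducible alternative. The main obstacle is exactly this incompressibility step: one must check that each cleaning move genuinely preserves a \emph{compressing} disc with \emph{essential} boundary on $F$, and set up the complexity so that the non-separating sub-case is also excluded; keeping the bookkeeping of the surgeries and the disc-scars consistent, so that ``disjoint from the $2$-handles and from the $\Delta_0$-scars'' really does return $E$ to $H_0$ with boundary on $S$, is where the real work lies, the rest being routine innermost-disc arguments.
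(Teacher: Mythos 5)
The paper does not prove this result; it is quoted verbatim from Casson--Gordon \cite{CaGo1} and used as a black box. So I will compare your argument against the standard Casson--Gordon untelescoping proof (which is what you are attempting to reproduce). Your overall architecture is the right one: reduce to the dichotomy ``weakly reducible implies reducible or Haken,'' rule out the reducible branch using irreducibility of $M$ and of the splitting, and prove the dichotomy by choosing a weak reduction that is extremal for some complexity and showing the non-sphere part of $S_\Delta$ is incompressible. However, there is a genuine gap in the heart of the argument.

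Your complexity function is wrong. You maximize the number of components of $S_\Delta$. The problem is the non-separating case: if a (cleaned-up) compressing disc $E$ for a component $F_0$ of $F$ has $\partial E$ non-separating on $F_0$, then surgering $S_\Delta$ along $\partial E$ does \emph{not} change the number of components, so adjoining $E$ to $\Delta_0$ produces a new weak reduction with the same component count, and you get no contradiction from maximality. Your fallback --- that ``the remaining sub-case contradicts essentiality of $\partial E$ on $F$'' --- is simply false: a non-separating simple closed curve on a closed surface of positive genus is always essential, so there is no contradiction to be had there. Casson--Gordon's complexity is designed precisely to avoid this: one minimizes something like $c(S_\Delta)=\sum_i \max(0, 2g(F_i)-1)$ over components $F_i$, which strictly decreases under compression whether the curve separates or not (and assigns $0$ to spheres, so that the all-sphere case feeds cleanly into the reducible alternative). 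An alternative fix closer in spirit to what you wrote is to maximize $|\Delta_0|+|\Delta_1|$ itself (bounded in terms of $g(S)$): then a non-parallel cleaned $E$ immediately contradicts maximality, and the remaining case is ``$E$ parallel to some $D\in\Delta_0$,'' which one must show forces either $\partial E$ inessential on $F$ or a reducing sphere --- the parallelism annulus in $S$ may carry curves of $\partial\Delta_1$, and if it does, some curve bounds essential discs on both sides. Your stated case split (``parallel or not'' crossed with ``$\partial E$ separates $S$ cut along $\partial\Delta_0$'') does not track the quantity you chose to maximize and does not close the argument.

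Two smaller points. First, in ruling out the reducible branch, you need the fact that a reducible Heegaard splitting of a closed irreducible $3$-manifold is stabilized; your parenthetical sketch (``take an innermost such ball and slide the corresponding handle'') is not an argument. What is actually used is Waldhausen's theorem \cite{Wa1} that every positive-genus Heegaard splitting of $S^3$ (equivalently, of a $3$-ball rel boundary) is stabilized, applied to the ball bounded by the reducing sphere; the Heegaard surface inside that ball has a priori unbounded genus, so this is a real input. Second, your cleaning step says ``make $E$ disjoint from the co-cores of the $2$-handles''; in a $3$-manifold the co-core of a $2$-handle is an arc, so the innermost-disc argument should be run against the cores (or the copies of $\Delta_1$). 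That one is cosmetic; the complexity issue is not.
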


\noindent\textbf{Compact Manifolds with Boundary}  The theory of Heegaard splittings naturally extends to connected compact manifolds with boundary.  Let $M$ be such a manifold with $\partial M$ the disjoint union of $\partial_0 M$ and $\partial_1 M$ with possible $\partial_1 M=\emptyset$.  Then a Heegaard splitting $(H_0, H_1)$ of $(M,\partial_0 M, \partial _1 M)$ is a decomposition where each $H_i$ is a compression body with $\partial_-(H_i)=\partial_iM$ and $\partial_+(H_0)=\partial_+(H_1)$.    The above results and their proofs directly extend to Heegaard splittings of compact manifolds.  This paper will restrict itself to closed manifolds, though most results extend to the compact case.  See the earlier survey papers and many of the papers in the references for results on manifolds with boundary.

\section{Minimal surfaces and min-max theory}
A closed surface $\Sigma$ in a $3$-manifold $N$ is a \emph{minimal surface} if the first variation of area is zero for all variations. Equivalently, $\Sigma$ is minimal if its mean curvature is identically equal to zero.  On a small enough scale, a minimal surface minimizes area.  There may however be global deformations which bring the area down.  To that end, on a minimal surface we can consider the second variation of area.  Let $\phi$ be a function on $\Sigma$ and $\nn$ the unit normal of $\Sigma$.  When $\Sigma$ is minimal and $\Sigma_s$ is a normal variation of $\Sigma$ with $\Sigma_0=\Sigma$ and variational vector field $\phi\,\nn$, then the second variation of area is
\begin{align}
\frac{d^2}{ds^2}\bigg\lvert_{s=0}\,\mbox{area} (\Sigma_s)=-\int_{\Sigma}\phi\,L\,\phi\, .
\end{align}
Here $L$ is the second variational operator given by that 
\begin{align}
L\,u=\Delta_{\Sigma}\,u+|A|^2\,u+\Ric_N(\nn,\nn)\,u\, ,
\end{align}
$A$ is the second fundamental form of $\Sigma$, and $\Ric_N(\nn,\nn)$ is the Ricci curvature of $N$ in the normal direction.  The operator $L$ is a Schr\"odinger operator and has only finitely many negative eigenvalues since $\Sigma$ is closed.  The \emph{Morse index} of a minimal surface is the number of negative eigenvalues of $L$.    It is said to be \emph{stable} when the index is zero and \emph{strictly stable} if the index is zero and $0$ is not an eigenvalue.  

Meeks-Simon-Yau \cite{MSY} proved that one can always minimize area in some non-trivial isotopy class to obtain a stable minimal surface.  Many three-manifolds (such as the three-sphere) contain no such non-trivial classes, and min-max methods are necessary to construct higher Morse index critical points of the area functional.  

The idea of min-max theory is the following. For some fixed $g$, consider the space of embedded genus $g$ surfaces in a Riemannian three-manifold.  On this space one can define the area functional, associating to each surface its area.  As in analogy with finite-dimensional Morse theory, non-trivial topology in the space of all genus $g$ surfaces should force the existence of critical points of the area functional, or minimal surfaces.  The simplest such genus $g$ families arise from considering Heegaard foliations. This infinite dimensional Morse theory was begun by Almgren in the 60s, completed by Pitts \cite{Pi} and Simon-Smith \cite{SS} in the 80s.  There were further works by Pitts-Rubinstein (\cite{PR1} \cite{PR2}) and Frohman-Hass \cite{FH}. Let us now give more details.  

Let $X^k$ be a $k$-dimensional manifold with boundary.   A \emph{sweepout parameterized by $X$} is a family of closed sets $\left\{\Sigma_t\right\}_{t\in X}$ continuous in the Hausdorff topology such that 

\begin{enumerate}
\setlength{\itemsep}{1pt}
  \setlength{\parskip}{0pt}
  \setlength{\parsep}{0pt}

\item $\Sigma_t$ is an embedded smooth surface for $t$ in the interior of $X$
\item $\Sigma_t$ varies smoothly for $t$ in the interior of $X$
\item For $t\in\partial X$, $\Sigma_t$ is a (possibly empty) smooth surface together with some $1$-dimensional arcs.  
\end{enumerate}

The simplest example of such a sweepout is a Heegaard foliation, where $X=I$ is the unit interval, and the endpoints of $I$ correspond to spines in the two handlebodies.  

If $\Lambda$ is a collection of sweepouts, we say that the set $\Lambda$ is \emph{saturated} if given a map $\phi\in C^{\infty}(I\times M,M)$ such that $\phi(t,-)\in\text{Diff}_{0} M$ for all $t\in I$, and a family  $\left\{\Sigma_t\right\}_{t\in X}\in\Lambda$, we have  $\left\{\phi(t,\Sigma_t)	\right\}_{t\in I}\in\Lambda$.  Denote by $\Lambda$ the smallest saturated family of sweepouts containing $\{\Sigma_t\}_{t\in X}$.

The width associated to $\Lambda$ is defined to be
\begin{equation}\label{w}
W:=W(M,\Lambda_H)=\inf_{\left\{\Sigma_t\right\}\in\Lambda}\sup_{t\in I} \mathcal{H}^2(\Sigma_t),
\end{equation}
\noindent
where $\mathcal{H}^2$ denotes $2$-dimensional Hausdorff measure. 

Suppose that \begin{equation} W > \sup_{t\in\partial X} \mbox{area}(\Sigma_t).\end{equation}  In this case we say that the sweep-out $\{\Sigma_t\}_{t\in X}$ is \emph{non-trivial}.  

We can now define a sequence of sweepouts in the saturation ``pulled tight" in the sense that their maximal areas are approaching the width. Namely, a \emph{minimizing sequence} is a sequence of families $\left\{\Sigma^n_t\right\}\in\Lambda_H$ such that
\begin{equation} 
\lim_{n\rightarrow\infty}\sup_{t\in[0,1]} \mbox{area}(\Sigma^n_t)=W.
\end{equation}
\
A \emph{min-max sequence} is then a sequence of slices $\Sigma^n_{t_n}$, $t_n\in (0,1)$ such that
\begin{equation} 
\mbox{area}(\Sigma^n_{t_n})\rightarrow W.
\end{equation}

The main result of Simon-Smith 80s \cite{SS} (c.f. \cite{CD}, \cite{DP}) is that some min-max sequence converges to a minimal surface (potentially disconnected, and with multiplicities):
\begin{theorem}[Simon-Smith]\label{SS}
Let $M$ be a closed oriented Riemannian $3$-manifold and $\Sigma_t$ $t\in X$ a $k$-parameter sweepout, and $\Lambda$ the smallest saturated set containing $\{\Sigma_t\}_{t\in X}$.  
Suppose 
\begin{equation}\label{isnontrivial}
W > \sup_{t\in\partial X} \mbox{area}(\Sigma_t).
\end{equation}

Then some min-max sequence $\Sigma_{t_i}^i$ converges as varifolds to $\sum_{j=1}^k n_j \Gamma_j$, where $\Gamma_j$ are smooth embedded pairwise disjoint minimal surfaces and where $n_j$ are positive integers.  Moreover, \begin{equation}
W=\sum_{j=1}^k n_j \mbox{area}(\Gamma_j).
\end{equation}
\end{theorem}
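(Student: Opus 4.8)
The plan is to follow the Almgren--Pitts scheme in the smooth (Simon--Smith) category, as developed in \cite{Pi}, \cite{SS} and streamlined in \cite{CD}, \cite{DP}. First I would \emph{pull tight} an arbitrary minimizing sequence $\{\Sigma^n_t\}\in\Lambda$: using the ambient isotopies allowed by saturation, one produces from it a new minimizing sequence whose slices of area close to $W$ are close, as varifolds, to the set of stationary integral varifolds of mass $W$. The mechanism is standard --- if a slice is bounded away from stationarity there is an ambient vector field whose flow strictly decreases its area, and a compactness/covering argument upgrades this to a deformation of the whole family that does not increase $\sup_t \mbox{area}$ and strictly improves the bad slices. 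The nontriviality hypothesis $W > \sup_{t\in\partial X}\mbox{area}(\Sigma_t)$ lets one take this deformation to be the identity near $\partial X$, so one stays inside $\Lambda$. Choosing a min-max sequence $\Sigma^i_{t_i}$ with $\mbox{area}(\Sigma^i_{t_i})\to W$ and passing to a subsequential varifold limit $V$ (possible since the masses are bounded by $W$) gives a stationary integral varifold with $\|V\|(M)=W$.

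The heart of the argument is to arrange, in addition, that the min-max sequence is \emph{almost minimizing in small annuli}. I would argue by contradiction: if no min-max sequence enjoyed this property at the relevant scales, then every slice near the width could be deformed, within $\Lambda$ and inside a controlled finite collection of annuli, to strictly smaller area; interpolating these local competitors across the parameter space $X$ --- this is the delicate combinatorial step of Almgren and Pitts --- would yield a family in $\Lambda$ with $\sup_t\mbox{area} < W$, contradicting the definition of the width. The output is a min-max sequence whose varifold limit $V$ is stationary \emph{and} almost minimizing in sufficiently small annuli about every point of $M$.

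With $V$ stationary and almost minimizing I would prove regularity by the \emph{replacement} method. In a small ball $B$ in which the slices are almost minimizing in every sub-annulus, minimize area in the isotopy class of $\Sigma^i_{t_i}\cap B$ and take $i\to\infty$; by the Meeks--Simon--Yau theory \cite{MSY} the limit is a varifold $V^*$ agreeing with $V$ off $B$, stationary, and stable and smooth inside $B$. One checks $V^*$ is again stationary across $\partial B$ and again almost minimizing, so the replacement can be iterated over a second collection of balls overlapping the first; combining the two families of (stable, hence curvature-estimated) replacements with a tangent-cone and unique-continuation analysis forces $V$ itself to be a smooth embedded minimal surface. Near each point the convergence $\Sigma^i_{t_i}\to V$ is then by finitely many sheets, which by the maximum principle for the limiting minimal graphs must be pairwise disjoint or identical; calling the distinct limit components $\Gamma_j$ and their sheet-multiplicities the positive integers $n_j$ yields $W = \|V\|(M) = \sum_j n_j\,\mbox{area}(\Gamma_j)$.

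The main obstacle is this last regularity step. Pull-tight and the combinatorial extraction of an almost-minimizing min-max sequence, while technical, are by now routine; but converting ``stationary $+$ almost minimizing'' into ``smooth embedded minimal surface'' requires the full replacement machinery --- building replacements that are simultaneously stable, stationary across the replacement sphere, and still almost minimizing, and then excluding singular tangent cones. A second, genuinely Simon--Smith-specific point is keeping control of the genus of the slices all along, so that the bound on topology survives in the limit and the components $\Gamma_j$ are honest embedded surfaces rather than merely the support of a stationary varifold; this is what separates the theorem above from the purely Almgren--Pitts varifold statement.
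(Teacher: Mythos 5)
The paper does not prove Theorem~\ref{SS}; being a survey, it states the result and refers the reader to Simon--Smith \cite{SS} and the expositions \cite{CD}, \cite{DP}. Your outline faithfully reproduces that standard proof scheme: pull-tight by ambient isotopies, the Almgren--Pitts combinatorial extraction of an almost-minimizing min-max sequence, and replacements via Meeks--Simon--Yau \cite{MSY} with stability, curvature estimates, and unique continuation to obtain smooth embedded limits. So the approach is the correct one, and you have identified the genuinely hard steps (the combinatorial covering argument and the regularity through replacements).

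Two small corrections for precision. First, the almost-minimizing property that the Pitts-type argument yields, and hence the replacement construction, live in \emph{annuli} $An(x,s,t)$ rather than in balls: the combinatorial dichotomy only guarantees that for each point one member of a pair of disjoint concentric annuli is good, so the whole replacement machinery is set up annulus by annulus, with two overlapping annulus families glued by unique continuation. Phrasing it with balls will lead you astray when you try to show the replacement is still almost minimizing. Second, the genus control is not part of Theorem~\ref{SS} and is not what makes the $\Gamma_j$ embedded: embeddedness and smoothness come entirely out of the replacement regularity theory, while the genus bound is a separate statement (the paper's Theorem~\ref{genusbounds}, due to \cite{DP} and \cite{Ke}) proved afterwards by a careful lifting/surgery analysis of the convergence. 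Conflating these two makes the last paragraph sound as if regularity depended on the genus bound, which it does not.
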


One also has the following genus bounds for the limiting minimal surfaces, expressing that the limit is achieved after surgeries.  Namely, after finitely many compressions on a min-max sequence, one obtains a surface isotopic to $n_i$ parallel copies about each $\Gamma_i$:  
\begin{theorem}[Genus bounds \cite{Ke}]\label{genusbounds}
The genus of the limiting minimal surface (in the notation of the previous theorem) can be controlled as follows:
\begin{equation}\label{gb}
\sum_{i\in O} n_ig(\Gamma_i) + \frac{1}{2}\sum_{i\in N} n_i(g(\Gamma_i)-1)\leq g,
\end{equation}
where $O$ denotes the set of $i$ such that $\Gamma_i$ is orientable, and $N$ the set of $i$ such that $\Gamma_i$ is non-orientable, and $g(\Gamma)$ denotes the genus of $\Gamma$.  The genus of a non-orientable surface is the number of cross-caps one must attach to a two-sphere to obtain a homeomorphic surface.
\end{theorem}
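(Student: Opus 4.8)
The plan is to combine the refined description of the convergence underlying Theorem~\ref{SS} with a surgery argument: one converts the min-max slices into a disjoint union of parallel copies of the $\Gamma_j$ (and, in the non‑orientable case, of their orientation double covers) by compressing along the necks that appear in the limit, observes that each compression can only lower the total genus, and then reads off \eqref{gb} from a short Euler‑characteristic computation. Here ``genus'' for a possibly disconnected surface means the sum of the genera of its components.

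First I would use the sharpened form of the Simon--Smith convergence (in the form developed in \cite{CD}, \cite{DP}, \cite{Ke}): the min-max sequence $\Sigma_i:=\Sigma^i_{t_i}$ can be chosen so that there is a finite set $\{p_1,\dots,p_\ell\}\subset M$, with $\ell$ bounded independently of $i$ because the uniform area bound together with the monotonicity formula caps the number of necks, such that for every small $\rho>0$ the surfaces $\Sigma_i$ converge, for $i$ large, smoothly and with the correct multiplicities to $\sum_j n_j\Gamma_j$ on $M\setminus\bigcup_m B_\rho(p_m)$. Fix $\rho$ small enough that the $\rho$-tubular neighbourhoods $T_j\supset\Gamma_j$ are pairwise disjoint, and fix one large $i$. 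Then, outside $\bigcup_m B_\rho(p_m)$, the surface $\Sigma_i$ is a union of $n_j$ disjoint normal graphs over $\Gamma_j$ with finitely many small disks removed whenever $\Gamma_j$ is two-sided, and a multi-sheeted graph inside the twisted $I$-bundle $T_j$ whenever $\Gamma_j$ is one-sided.

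Next I would remove the necks. In each ball $B:=B_\rho(p_m)$ the piece $\Sigma_i\cap B$ is a compact surface properly embedded in a $3$-ball; by the standard argument using irreducibility of $B$ (discard sphere components, which bound balls; any remaining non-disk component is compressible or $\partial$-compressible inside $B$, and one checks neither operation raises the total genus), finitely many surgeries reduce $\Sigma_i\cap B$ to a union of disks. Carrying this out in every ball produces a closed embedded surface $\hat\Sigma_i$ with $\mathrm{genus}(\hat\Sigma_i)\le\mathrm{genus}(\Sigma_i)$ that agrees with $\Sigma_i$ outside the balls and consists of disks capping off the graph boundary circles inside them. Hence near a two-sided $\Gamma_j$ the surface $\hat\Sigma_i$ is exactly $n_j$ parallel copies of $\Gamma_j$, while near a one-sided $\Gamma_j$ each component of $\hat\Sigma_i$ is a closed orientable incompressible surface in the twisted $I$-bundle $T_j$, hence (by the classification of incompressible surfaces in a twisted $I$-bundle) isotopic to the boundary $\widetilde\Gamma_j$ of that bundle, i.e.\ to the orientation double cover of $\Gamma_j$; comparing varifold masses forces exactly $n_j/2$ such copies, so $n_j$ is even in the one-sided case. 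Up to isotopy and deletion of finitely many two-spheres,
\[
\hat\Sigma_i\ \cong\ \Big(\bigsqcup_{j\in O} n_j\text{ copies of }\Gamma_j\Big)\ \sqcup\ \Big(\bigsqcup_{j\in N}\tfrac{n_j}{2}\text{ copies of }\widetilde\Gamma_j\Big).
\]

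Finally comes the bookkeeping. Being an interior slice of a genus-$g$ sweepout, $\Sigma_i$ has genus at most $g$; each compression either lowers the genus of a component by one or splits a component of genus $h$ into two components of genera summing to $h$, and deleting two-spheres does nothing, so the total genus is non-increasing and $\mathrm{genus}(\hat\Sigma_i)\le g$. On the other hand $\chi(\widetilde\Gamma_j)=2\chi(\Gamma_j)$ forces $g(\widetilde\Gamma_j)=g(\Gamma_j)-1$ for the orientation double cover of a non-orientable surface, so from the displayed identification $\mathrm{genus}(\hat\Sigma_i)=\sum_{j\in O}n_j\,g(\Gamma_j)+\sum_{j\in N}\tfrac{n_j}{2}\big(g(\Gamma_j)-1\big)$, and comparing with $\mathrm{genus}(\hat\Sigma_i)\le g$ gives \eqref{gb}. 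I expect the first step to be the real obstacle: establishing smooth graphical convergence off a uniformly finite set of points and, above all, upgrading the heuristic ``the limit is reached after finitely many surgeries'' to honest compressions carried out on the $\Sigma_i$ with the graphical region left intact — once this local structure is secured everything else is topological bookkeeping, modulo the extra care, in the non-orientable case, that the surgered pieces are genuinely double covers rather than some other orientable surface sitting in the $I$-bundle.
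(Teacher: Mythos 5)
Your outline is correct and follows the same route as the cited reference \cite{Ke} and the paper's own one-line description of it (``after finitely many compressions on a min-max sequence, one obtains a surface isotopic to $n_i$ parallel copies about each $\Gamma_i$''): establish smooth, multiplicity-correct graphical convergence off a finite set, surgery away the necks inside small balls, observe that total genus is non-increasing under compression and deletion of spheres, and then do the Euler characteristic bookkeeping (using $g(\widetilde\Gamma_j)=g(\Gamma_j)-1$ for the orientation double cover). You also correctly identify where the real work lies — upgrading Simon--Smith varifold convergence to genuine graphical convergence with the stated multiplicities away from a uniformly finite bad set, which is the principal technical contribution of \cite{Ke} beyond the earlier De Lellis--Pellandini bound without the $n_i$'s — though the precise source of the finiteness of the bad set is the almost-minimizing-in-annuli property together with Schoen--Simon curvature estimates rather than the monotonicity formula alone, a small inaccuracy that does not affect the structure of the argument.
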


Recently Marques-Neves \cite{MN} obtained the following upper Morse index bounds:

\begin{proposition}[Upper Index Bounds \cite{MN}]\label{indexbounds}
Suppose all components of the min-max limit are orientable.  Then 
\begin{equation}
\sum_{i\in O} index(\Gamma_i) \leq k, 
\end{equation}
where $k$ denotes the dimension of the parameter space $X$.  
\end{proposition}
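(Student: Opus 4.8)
The plan is to follow the strategy of Marques–Neves: upgrade the min-max construction from producing a varifold to producing a minimal surface whose index is controlled by the number of parameters, by exploiting the topology of the parameter space $X$ and a deformation argument in the space of sweepouts. First I would set up the relevant ambient space of ``boundaries'' or of embedded surfaces (the flat-cycle or Simon–Smith setting), where the area functional is defined, and recall that a min-max sequence accumulating at the width $W$ concentrates on a stationary integral varifold $V=\sum_j n_j\Gamma_j$ with each $\Gamma_j$ smooth, embedded, and (by hypothesis) orientable. The key mechanism is the \emph{deformation theorem}: if some $\Gamma_j$ had index strictly larger than allowed, or if $\sum_j \mathrm{index}(\Gamma_j) > k$, then one could use the negative eigenfunctions of the stability operators $L_{\Gamma_j}$ to build a $k$-parameter family of ambient isotopies that strictly decreases the maximal area along any min-max family near $V$, contradicting that $W$ is the width of the saturated family $\Lambda$.

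The core steps, in order, are as follows. (i) Pass to a min-max sequence $\{\Sigma^n_{t_n}\}$ converging as varifolds to $\sum_j n_j\Gamma_j$ as in Theorem~\ref{SS}, and fix a component $\Gamma=\Gamma_j$ together with an orthonormal basis $\varphi_1,\dots,\varphi_m$ of the span of eigenfunctions of $L_\Gamma$ with negative eigenvalue, so $m=\mathrm{index}(\Gamma)$. (ii) Build a smooth ``decreasing'' deformation: for small $\delta>0$ and $v\in B^m_\delta\subset\mathbb R^m$, construct an ambient diffeomorphism $F_v\in\mathrm{Diff}_0(M)$, depending smoothly on $v$, whose restriction near $\Gamma$ pushes $\Gamma$ in the normal direction by $\sum_i v_i\varphi_i\,\mathbf n$, so that $\mathrm{area}(F_v(\Gamma)) \le \mathrm{area}(\Gamma) - c|v|^2$ for some $c>0$, and so that $F_v$ is the identity outside a fixed small neighborhood of $\Gamma$; do this simultaneously and disjointly for all components $\Gamma_j$, producing a family parameterized by $\prod_j B^{m_j}_\delta \cong B^{M'}_\delta$ with $M'=\sum_j m_j$. (iii) \emph{Catenoid / neck estimate:} control the area of the pieces of $\Sigma^n_{t_n}$ lying in the thin annular regions between the $n_j$ sheets near $\Gamma_j$, showing their contribution to the change in area under $F_v$ is lower order — this is where multiplicity $n_j>1$ is handled and is the technical heart. (iv) Conclude by a topological argument: if $M' = \sum_j \mathrm{index}(\Gamma_j) > k = \dim X$, then for a min-max family $\{\Sigma^n_t\}$ with max area near $W$, the slices with area within $\epsilon$ of $W$ are confined (after the varifold convergence and an interpolation/almost-minimizing argument) to a region where the deformation $F_{v(t)}$ can be applied with $v$ chosen as a continuous function of $t\in X$ that is nonzero wherever the area is near-maximal — but a continuous map from $X^k$ (or the relevant neighborhood in $X$) into $B^{M'}_\delta$ that is forced to be nonzero on the near-maximal set, when $M'>k$, can be used to strictly lower the sup of the area, contradicting the definition~\eqref{w} of $W$ as an infimum over the saturated family. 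Hence $\sum_{i\in O}\mathrm{index}(\Gamma_i)\le k$.

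The main obstacle I expect is step (iii), the \textbf{neck/multiplicity estimate}: when $n_j>1$, the min-max slices $\Sigma^n_{t_n}$ near $\Gamma_j$ look like $n_j$ nearly-parallel sheets joined by small necks (catenoidal bridges), and one must verify that applying the normal deformation $F_v$ — which is designed to decrease the area of a single copy of $\Gamma_j$ by $c|v|^2$ — actually decreases the total area of all $n_j$ sheets by a comparable amount, with the neck regions contributing only an error that is $o(|v|^2)$ or can be absorbed. This requires the Simon–Smith ``no mass in the necks'' / almost-minimality estimates together with a careful Taylor expansion of area under the pushoff, uniformly in $n$; it is exactly the point where the varifold convergence must be supplemented by the finer structure of how the sequence degenerates. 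A secondary subtlety is making the choice $v=v(t)$ genuinely continuous in $t$ across the parameter space and compatible with the saturation (so the deformed family still lies in $\Lambda$), which uses that $F_v\in\mathrm{Diff}_0(M)$ and a partition-of-unity/interpolation argument on $X$.
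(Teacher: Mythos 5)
The paper does not prove this proposition; it is quoted verbatim from Marques--Neves \cite{MN} and used as a black box (the reference is to \emph{Morse index and multiplicity of min-max minimal hypersurfaces}, Camb.\ J.\ Math.\ 4 (2016)). So there is no in-paper proof to compare your attempt against.

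That said, a few remarks on your sketch, which does capture the right high-level mechanism (negative eigenfunctions of the Jacobi operator produce ambient, area-decreasing diffeomorphisms, and more than $k$ independent decreasing directions is incompatible with a $k$-parameter sweepout realizing the width). The main place I'd push back is your step (iii), the ``neck/multiplicity estimate,'' which you flag as the technical heart. I don't think it is. Once you build the deformation as an \emph{ambient} diffeomorphism $F_v$ (your step (ii)), the area of a pushed-forward surface, $\mathrm{area}(F_v(\Sigma)) = \int_{\Sigma} JF_v$, is an integral of a continuous function of the point and the tangent plane, hence it is continuous in the varifold topology on $\Sigma$ with $v$ fixed. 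Therefore $\Sigma^n_{t_n}\to\sum n_j\Gamma_j$ as varifolds already gives $\mathrm{area}(F_v(\Sigma^n_{t_n}))\to\sum n_j\,\mathrm{area}(F_v(\Gamma_j))$; the neck regions carry negligible mass and contribute nothing, and the factor $n_j$ comes out for free by linearity. No separate catenoid estimate is required. By contrast, the genuinely delicate points, which your sketch compresses into a sentence in step (iv), are: (a) choosing $v=v(t)$ continuously over the whole parameter space so that the deformed family still lies in the saturated class $\Lambda$ while the max slices are cut off from the near-boundary slices; (b) the topological argument itself --- ``a nonzero continuous map $X^k\to B^{M'}_\delta$ with $M'>k$'' is not by itself a contradiction, and what is actually used in \cite{MN} is a careful Lusternik--Schnirelmann-type argument involving the unstable cone of the area functional and the critical set of the min-max family, together with a discrete-to-continuous interpolation theorem; and (c) making the quadratic decay estimate for $F_v$ uniform over the compact parameter set, so that the competitor has $\sup$-area strictly below $W$ rather than just $\le W$. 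Your outline is consistent with the Marques--Neves strategy, but as written it understates (a)--(c) and overstates the role of the neck analysis.
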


\section{Pitts-Rubinstein Conjecture}

Given a Heegaard splitting of a three-manifold, a natural question is when one can isotope the Heegaard surface to be a minimal surface.   A Heegaard surface gives a sweepout of a three-manifold (parameterized by $X$ equal to the unit interval $I$ where at the two points of $\partial I$ the surface is fixed to degenerate to one-dimensional spines of the handlebodies).  It is a consequence of the isoperimetric inequality that $W>0$.   Thus \eqref{isnontrivial} is satisfied and this family is \emph{non-trivial} and the Min-Max Theorem \ref{SS} applies.    

The difficulty is that the minimal surface obtained may contain several connected components, some with positive integer multiplicities.   In fact, if one starts with a random stabilized Heegaard surface, one does not expect to produce a minimal surface isotopic to the surface.  For instance, in round $S^3$, if one runs a min-max procedure with respect to the stabilized genus $1$ splitting by Proposition \ref{indexbounds} one obtains a minimal surface of index at most $1$, which must be an equator of genus $0$.

However, in the 80s Pitts-Rubinstein conjectured that if one begin with a \emph{strongly irreducible} splitting, one has much better control.  Namely, 

\begin{conjecture}[Pitts-Rubinstein]\label{pitts}
A strongly irreducible Heegaard surface in a Riemannian three-manifold is either 
\begin{enumerate}
\item isotopic to a minimal surface of index at most $1$ 
\item isotopic after a single compression to the boundary of a tubular neighborhood of a stable one-sided Heegaard surface.  
\end{enumerate}
\end{conjecture}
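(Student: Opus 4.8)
The plan is to run the Simon--Smith min-max procedure on the one-parameter sweepout associated to the splitting $\mH=(H_0,H_1)$, $\partial H_0=\partial H_1=S$, and then use the strong irreducibility of $S$ to pin down the resulting minimal surface. The sweepout runs between spines of the two handlebodies, so by the isoperimetric inequality $W>0=\sup_{t\in\partial I}\mathrm{area}(\Sigma_t)$, and Theorem \ref{SS} produces a min-max sequence converging as varifolds to $\Gamma=\sum_j n_j\Gamma_j$, with the $\Gamma_j$ smooth, embedded, pairwise disjoint, minimal, and $W=\sum_j n_j\,\mathrm{area}(\Gamma_j)$. Theorem \ref{genusbounds} bounds the genus of $\Gamma$ in terms of $g=g(S)$ and, crucially, says that after finitely many compressions a min-max slice---which is isotopic to $S$---becomes isotopic to $n_i$ parallel copies of each $\Gamma_i$. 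Thus $S$ is, up to isotopy, obtained from $n_j$ parallel copies of each $\Gamma_j$ by attaching embedded tubes running through the complementary product and handlebody regions.

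The heart of the argument is then topological: the tube system rebuilding a \emph{strongly irreducible} $S$ must be extremely rigid. The core of any such tube caps off to an embedded disk which is a compressing disk for $H_0$ or for $H_1$, according to the side of the relevant stack of parallel copies on which the tube lies. If $\Gamma$ had two distinct components, or a two-sided component with $n_j\ge 2$, or any component with $n_j\ge 3$, then among these tube-disks one can select one essential in $H_0$ and one essential in $H_1$ with disjoint boundary curves on $S$---a weak reduction of $\mH$, contradicting the hypothesis. Hence $\Gamma=n_1\Gamma_1$, with either (a) $\Gamma_1$ orientable and $n_1=1$, or (b) $\Gamma_1$ non-orientable and $n_1=2$ (the even multiplicity forced because the two local sheets over $\Gamma_1$ glue up into the connected orientable double cover $\partial N(\Gamma_1)$); in case (b) at most one tube survives, and its removal is the allowed compression.

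In case (a) there are no tubes, so $S$ is isotopic to the orientable minimal surface $\Gamma_1$, and Proposition \ref{indexbounds} applied with $k=1$ gives $\mathrm{index}(\Gamma_1)\le 1$: this is alternative (1). In case (b), $S$ is isotopic, after a single compression, to $\partial N(\Gamma_1)$; since the complement of that neighborhood is a handlebody, $\Gamma_1$ is a one-sided Heegaard surface, and it remains to see that $\Gamma_1$ is stable. For this I would argue that an unstable $\Gamma_1$ of multiplicity $2$ would allow one to build a competitor sweepout---pushing the two sheets apart along a lowest eigenfunction of the stability operator---with maximal area strictly below $W$, contradicting the definition of the width; hence $\Gamma_1$ is stable, giving alternative (2).

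I expect the main obstacle to lie not in the topology of the second step but in the min-max input feeding it. The genuinely hard point, and precisely where earlier treatments of this conjecture were incomplete, is excluding spurious multiplicity in $\Gamma$ and making the stability claim in case (b) rigorous: one must rule out the min-max limit concentrating area on extra parallel sheets of some $\Gamma_j$, which requires the Almgren--Pitts deformation theorems together with a catenoid-type estimate controlling how area can accumulate in thin necks, plus a delicate comparison argument showing that any multiplicity that does survive forces its surface to be a stable one-sided surface. Once this multiplicity-and-stability analysis is in place, the tube-counting argument and the Marques--Neves index bound finish the proof cleanly.
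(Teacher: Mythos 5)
Your proposal correctly sets up the min-max machinery and correctly identifies the two hard technical issues (spurious multiplicity, and stability of the one-sided surface). However, the central topological claim on which your argument hinges is not true, and it causes you to miss the main structure of the actual proof.

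You assert that if the min-max limit $\Gamma=\sum n_j\Gamma_j$ has two distinct components, then ``among these tube-disks one can select one essential in $H_0$ and one essential in $H_1$ with disjoint boundary curves,'' giving a weak reduction. This is exactly backwards: by strong irreducibility the disjoint compression curves along which a min-max slice degenerates onto the $\Gamma_j$'s must \emph{all} lie on the same side of $S$ (otherwise you already have a weak reduction --- that much is right), and consequently every ``tube-disk'' is a compressing disk for the \emph{same} handlebody, say $H_1$. Having several components $\Gamma_1,\Gamma_2,\dots$ does not force tubes on both sides, and in general is \emph{not} a contradiction for a strongly irreducible $S$. In other words, a single pass of min-max can legitimately produce a disconnected stable-plus-unstable limit even when $S$ is strongly irreducible, and your argument has no way to recover $S$ from it. (The multiplicity claim for an orientable $\Gamma_j$ of positive genus is the content of Proposition~\ref{mult1}, and even that requires the Scharlemann--Thompson classification of Heegaard splittings of $(\text{surface})\times I$, not a quick tube count.)

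Because of this, your proposal omits the entire iteration that is the heart of the proof. The correct strategy, following Pitts--Rubinstein and made rigorous in \cite{KLS}, is: if the limit is not already isotopic to $S$, excise the handlebodies bounded by the limit components, minimize area past the unstable boundary component to obtain a smaller region $M''$ with \emph{strictly stable} boundary, and then run min-max again in $M''$. The genuinely new technical input is Proposition~\ref{stableboundary} (min-max with stable boundary), proved via the ``mean curvature flow by hand'' interpolation of Proposition~\ref{main_deformation}; this is what rules out the min-max limit in $M''$ being a boundary sphere with multiplicity, and it has no analogue in your sketch. The iteration then terminates, using bumpiness to exclude infinitely many nested bounded-genus minimal surfaces (which would produce a Jacobi field). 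A related issue: your appeal to Proposition~\ref{indexbounds} in case~(b) is not legitimate, since that statement is hypothesized only for all-orientable min-max limits, so the stability of the one-sided $\Gamma_1$ must instead come out of the iteration/interpolation argument. Until the iteration and the stable-boundary min-max are in place, the proposal does not close the argument.
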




Recall that a \emph{one-sided Heegaard surface} is a surface $\Sigma$ in a three-manifold whose complement is an open handlebody.  The simplest example is $\mathbb{RP}^2\subset \mathbb{RP}^3$ as $\mathbb{RP}^3\setminus \mathbb{RP}^2$ is a three-ball.  After a single compression on a Heegaard torus in $\mathbb{RP}^3$ one obtains a two-sphere bounding an $I$-bundle about an $\mathbb{RP}^2$.  

The third-named author together with Y. Liokumovich and A. Song recently proved Conjecture \ref{pitts}:

\begin{theorem}[K-Liokumovich-Song \cite{KLS}]\label{pittsrubinstein}
The Pitts-Rubinstein conjecture is true. 
\end{theorem}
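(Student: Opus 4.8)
The plan is to run the Simon--Smith min-max construction (Theorem~\ref{SS}) on the interval sweepout determined by the given Heegaard splitting and then use strong irreducibility, together with the genus bounds (Theorem~\ref{genusbounds}) and the index bounds (Proposition~\ref{indexbounds}), to force the min-max limit to be as simple as the two alternatives of Conjecture~\ref{pitts} allow. Let $S$ be the strongly irreducible Heegaard surface with handlebodies $H_0, H_1$. The Heegaard foliation gives a $1$-parameter ($k=1$) sweepout $\{\Sigma_t\}_{t\in I}$ whose two endpoint slices are the spines of $H_0$ and $H_1$, so that $\sup_{t\in\partial I}\operatorname{area}(\Sigma_t)=0$; by the isoperimetric inequality $W>0$, the sweepout is non-trivial, and Theorem~\ref{SS} produces a min-max sequence $\Sigma^i_{t_i}$ converging as varifolds to $V=\sum_j n_j\Gamma_j$ with $W=\sum_j n_j\operatorname{area}(\Gamma_j)$. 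Since $k=1$, Proposition~\ref{indexbounds} gives $\sum_j\operatorname{index}(\Gamma_j)\le 1$ for the orientable components, and Theorem~\ref{genusbounds} says that after finitely many compressions each $\Sigma^i_{t_i}$ is isotopic to $n_j$ parallel copies of each $\Gamma_j$. Because every interior slice of the sweepout is isotopic to $S$, this means: $S$, after finitely many compressions, is isotopic to the disjoint union of the $n_j$ copies of the $\Gamma_j$, where a one-sided $\Gamma_j$ forces $n_j$ even and contributes $n_j/2$ copies of $\partial N(\Gamma_j)$, with $N(\Gamma_j)$ a tubular neighborhood.

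The core of the argument is to show that strong irreducibility forces $V$ to consist of a single essential component. One tracks the compressions that degenerate a near-limit slice $\Sigma^i_t$ onto $V$: for $t$ just below $t_i$ these discs can be taken in the component of $M\setminus\Sigma^i_t$ containing the spine of $H_0$, and for $t$ just above $t_i$ in the component containing the spine of $H_1$ --- the Heegaard-sweepout version of the convergence-with-genus-control that underlies Theorem~\ref{genusbounds}. Since strong irreducibility forbids an essential compressing disc of $H_0$ disjoint from an essential compressing disc of $H_1$, these two families cannot both contain essential discs; discarding the inessential ones, $S$ is simplified by essential compressions on a single side to a connected surface whose only minimal core is one component $\Gamma$ of $V$ (one checks along the way that no stable two-sided piece can survive as an extra component of $V$). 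A two-sided $\Gamma$ must then occur with multiplicity one, for multiplicity two would rebuild $S$ by tubing two parallel copies of $\Gamma$ on both sides and hence weakly reduce it; so $S$ is isotopic to $\Gamma$, giving conclusion (1). A one-sided $\Gamma$ occurs with multiplicity two, and the surviving compressions reduce $S$ to $\partial N(\Gamma)$ in a single step, giving conclusion (2) once $\Gamma$ is known to be stable.

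To finish one controls the index. In case (1), $\operatorname{index}(\Gamma)\le 1$ is immediate from Proposition~\ref{indexbounds}. In case (2) we have $W=2\operatorname{area}(\Gamma)$, and we argue $\Gamma$ is stable by contradiction: a destabilizing variation of the (non-orientable) normal bundle of $\Gamma$ would give two-sided pushoffs of $\partial N(\Gamma)$ of area strictly less than $2\operatorname{area}(\Gamma)=W$; inserting such a detour into the window of parameters near $t_i$ in a minimizing sequence for $\Lambda$ would produce a sweepout of maximal area strictly below $W$, contradicting the definition of the width. Hence $\Gamma$ is stable, which is conclusion (2).

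The main obstacle is the structural step of the second paragraph: converting the purely topological hypothesis of strong irreducibility into quantitative control on the number of components of the min-max limit and their multiplicities. This is precisely where Pitts and Rubinstein's original argument was only a sketch, and a rigorous proof requires the full modern regularity machinery --- almost-minimizing slices, Simon--Smith convergence with genus bounds, and the Marques--Neves index bounds --- together with a careful accounting of which handlebody each compression lies in and with arguments ruling out high-multiplicity and ``mixed'' configurations. Secondary difficulties include showing that extra stable two-sided components of $V$ can be absorbed and keeping the orientable versus non-orientable bookkeeping straight in the genus and index inequalities.
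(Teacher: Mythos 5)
Your proposal correctly assembles the right ingredients --- Simon--Smith min-max, the genus bounds, the Marques--Neves index bounds, and the Multiplicity One proposition (Proposition~\ref{mult1}) --- but it goes astray at the crucial step, and the point at which it fails is precisely the one the paper identifies as the heart of \cite{KLS}.

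Your second paragraph asserts that strong irreducibility forces the min-max limit $V=\sum_j n_j\Gamma_j$ to be essentially a single component after a single min-max. That is not true, and it is not what strong irreducibility gives. Strong irreducibility forces all essential compressions to lie on one side of $S$; but compressing $S$ along several disjoint discs in one handlebody typically yields a \emph{disconnected} surface, and the min-max limit may consequently consist of several pairwise disjoint minimal surfaces $\Gamma_1,\dots,\Gamma_k$. The Multiplicity One proposition only constrains the multiplicity of each orientable positive-genus component to be $1$; it says nothing about $k$. Because of this, the actual argument is \emph{iterative}: one removes the handlebodies bounded by the $\Gamma_j$, minimizes area on the unstable component to arrive at a compression body $M''$ with strictly stable boundary, runs a fresh min-max in $M''$, and repeats. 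Bumpiness of the metric is then used to rule out an infinite nested family of minimal surfaces (which would give a Jacobi field) and so guarantees the iteration terminates. Your one-shot argument has no mechanism for dealing with a disconnected first min-max output, nor for the Jacobi-field termination step.

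The iteration brings with it the genuine technical difficulty that your proposal never engages: to run min-max in a subdomain $M''$ with stable minimal boundary, one must ensure the min-max surface produced lies in the \emph{interior} of $M''$ rather than degenerating onto $\partial M''$ with multiplicity. This is Proposition~\ref{stableboundary}, and its proof rests on the ``mean curvature flow by hand'' interpolation result, Proposition~\ref{main_deformation}: given a two-sphere in a product neighborhood $N_h(\Gamma)\cong \mathbb{S}^2\times[0,1]$ of a strictly stable sphere $\Gamma$, one must isotope it to $\Gamma$ or to a small sphere with arbitrarily small area increase, which requires squeezing tricks in balls and the Lightbulb theorem to untangle nested and knotted necks. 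Your stability argument for case (2) --- inserting a cheap detour into a minimizing sequence --- is the right intuition for why the width should detect instability, but making it rigorous is exactly this interpolation problem, not a straightforward competitor construction. In short, your outline reproduces Pitts and Rubinstein's original heuristic; the substance of the theorem is the iteration, the min-max-with-stable-boundary step, and the interpolation lemma, none of which appear in your argument.
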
 

It is shown in \cite{KLS} that the dichotomy in Theorem \ref{pittsrubinstein} is \emph{sharp} in that one can find a metric on $\mathbb{RP}^3$ for which (2) and not (1) occurs.  Namely, there exist metrics on $\mathbb{RP}^3$ of positive scalar curvature containing \emph{no} index $1$ minimal Heegaard tori.    
  
The assumption of \emph{strong irreducibility} in Theorem \ref{pittsrubinstein} is essential.  The three-torus has a genus $3$ irreducible but not strongly irreducible splitting.  In flat tori given by quotients of $\mathbb{R}^3$ by nearly degenerate lattices, however, Ritore-Ros \cite{RR} have shown that there \emph{do not exist} index $1$ minimal surfaces of genus $3$ (and nor can case (2) occur).    
 
Why is strong irreducibility so important?   Firstly, essential compressions on such a surface can only be into \emph{one} of the handlebodies. Secondly, if one runs the min-max process relative to such splittings, all orientable surface obtained (except for two-spheres) must have multiplicity $1$:

 \begin{proposition}[Multiplicity One]\label{mult1}
Suppose a min-max procedure is performed relative to a strongly irreducible splitting to obtain minimal surfaces $\Gamma_1, ..., \Gamma_k$ occurring with multiplicities $n_1, ...n_k$. If any $\Gamma_i$ is orientable and of positive genus, then $n_i=1$.  
\end{proposition}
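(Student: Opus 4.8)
The plan is to argue by contradiction using the genus bounds (Theorem~\ref{genusbounds}) together with the defining property of strong irreducibility: any essential compression of a strongly irreducible Heegaard surface can be performed into only one of the two handlebodies. Suppose some orientable $\Gamma_i$ of positive genus occurs with multiplicity $n_i\ge 2$. The key geometric input, coming from the Simon--Smith convergence together with the genus-bound machinery of \cite{Ke}, is that a min-max sequence $\Sigma^n_{t_n}$ converging to $\sum_j n_j\Gamma_j$ can, after finitely many compressions, be made isotopic to a disjoint union of parallel copies of the $\Gamma_j$ --- in particular, near $\Gamma_i$ the sequence looks like $n_i$ parallel copies of $\Gamma_i$ joined up by thin tubes (the ``compressions'' that were undone). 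First I would make precise this local picture: for large $n$, a subsurface of $\Sigma^n_{t_n}$ lying in a tubular neighborhood $N(\Gamma_i)\cong \Gamma_i\times(-1,1)$ is, up to isotopy inside $M$, obtained from $n_i$ horizontal copies $\Gamma_i\times\{s_1\},\dots,\Gamma_i\times\{s_{n_i}\}$ by attaching vertical tubes, and the ambient surface $\Sigma^n_{t_n}$ is isotopic to the Heegaard surface $S$.

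The heart of the argument is then to exhibit a weak reducibility of $S$ from this multi-sheeted picture, contradicting strong irreducibility. When $n_i\ge 2$ and $g(\Gamma_i)\ge 1$, two adjacent parallel sheets $\Gamma_i\times\{s_p\}$ and $\Gamma_i\times\{s_{p+1}\}$ cobound a product region $\Gamma_i\times[s_p,s_{p+1}]$ in $M$. I would locate in this product region, on opposite sides, a pair of disjoint essential compressing disks: a meridian-type curve on $\Gamma_i$ (which exists since $g(\Gamma_i)\ge 1$) bounds a disk that can be pushed to one side, and a parallel copy of it bounds a disk on the other side, with the two disk boundaries disjoint on $S$ because they lie on different sheets. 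One must check these disks are genuinely essential in the respective handlebodies of the splitting $S$ and lie on opposite sides --- here is where the ``two-sphere'' and ``positive genus'' hypotheses are used, since for a sphere factor the putative compressing curve is inessential, and the orientability of $\Gamma_i$ is what guarantees the sheets are genuinely parallel (two-sided) rather than forming a twisted $I$-bundle. The conclusion is that $S$ is weakly reducible, contradicting strong irreducibility, so $n_i=1$.

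The main obstacle I anticipate is the passage from the varifold/Hausdorff convergence of the min-max sequence to the clean topological statement that $\Sigma^n_{t_n}$ near $\Gamma_i$ is, up to ambient isotopy, exactly $n_i$ parallel copies tubed together; in other words, controlling the topology of the comparison surfaces along the minimizing sequence and ruling out that the ``extra'' multiplicity is absorbed in some subtler way (e.g. by handles of $\Sigma^n_{t_n}$ that link the sheets in a way that obstructs finding the disjoint disks). This is precisely the content of the genus-bound surgery description of \cite{Ke}, so I would invoke that as a black box: after finitely many compressions the min-max sequence is isotopic to parallel copies of the limit components, and the compressions can be taken to be supported in disjoint tubular neighborhoods of the $\Gamma_j$. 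A secondary technical point is ensuring the two compressing disks extracted from the product region $\Gamma_i\times[s_p,s_{p+1}]$ survive as essential disks after we re-glue the tubes to recover $S$ (rather than a compressed surface); this should follow because the tubes can be taken thin and disjoint from chosen meridian disks, but it is the step requiring the most care in writing up.
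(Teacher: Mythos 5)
Your setup is correct and matches the paper's: by Theorem~\ref{genusbounds} the min-max surface degenerates to $n_i$ parallel sheets near each $\Gamma_i$ after finitely many compressions, and by strong irreducibility all essential compressions go into a single handlebody $H_1$, producing handlebodies $J_1,\ldots,J_k$ on one side. But from there your argument diverges from the paper's and I believe it has a genuine gap.

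The problem is the claim that a ``meridian-type curve on $\Gamma_i$'' bounds essential compressing disks on opposite sides of the Heegaard surface. A curve $c$ on $\Gamma_i$ that is essential in $\Gamma_i$ does \emph{not} bound a disk inside the product region $\Gamma_i\times[s_p,s_{p+1}]$, and there is no reason it should bound a disk in the handlebody on either side of $S$. Indeed, the handlebodies adjacent to the sheets from the $H_1$-side are the $J_j$'s; a curve $c\times\{s_p\}$ bounds a disk there only if it happens to be a meridian of $J_p$, and a parallel copy $c\times\{s_{p+1}\}$ bounding a disk in the adjacent piece $J_{p+1}$ would \emph{also} land in $H_1$. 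Thus even when such meridian disks exist, they are on the \emph{same} side of $S$ and give no weak reduction. Finding a disk on the $H_0$ side is a genuinely separate question (the $H_0$-side region between consecutive sheets is a product with 1-handles attached, and its meridians are not simply lifts of curves on $\Gamma_i$). This is exactly the delicacy you flagged at the end (``requiring the most care''), but I don't think the sketch survives the care.

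The paper's proof handles the two cases differently and neither step is what you do. For $n_i\ge 3$, the argument does \emph{not} produce a weak reduction: it observes that the complementary regions of the sheets alternate between pieces of the compressed $H_1$ (which are handlebodies $J_j$) and pieces of the other side, so at least one of the two interior gaps --- between $S_1,S_2$ or between $S_2,S_3$ --- must be a $J_j$; but each such gap is $\{\text{surface}\}\times I$, which is not a handlebody since $\Gamma_i$ has positive genus. Contradiction, directly, with no compressing disks needed. For $n_i=2$ this counting argument is unavailable, and the paper invokes a real external input: the Scharlemann--Thompson classification of Heegaard splittings of $\{\text{surface}\}\times I$ \cite{ST1}, which shows the only irreducible splitting one could recover by re-gluing tubes is a single vertical handle joining $S_1$ to $S_2$, and \emph{that} configuration admits an explicit weak reduction. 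Your sketch omits both the alternation/handlebody-vs-product dichotomy and the appeal to \cite{ST1}, and it does not explain why the $n_i=2$ case needs a different (and harder) argument than $n_i\ge 3$. So as written, the proposal is not a correct proof.
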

\begin{proof}
By Theorem \ref{genusbounds} it follows that after finitely many compressions on $\Sigma$ we obtain for each $i$, $n_i$ parallel sheets $S_1$, ... $S_{n_i}$ in a neighborhood of $\Gamma_i$.  By strong irreducibility, all essential compressions on $\Sigma$ must be into the same handlebody $H_1$ giving rise to handlebodies of smaller genera $J_1$, ... $J_k$.   We claim $n_i=1$.   If $n_i\geq 3$, then either the region bounded between $S_1$ and $S_2$ is a handlebody among the $J_i$ or else the region between $S_2$ and $S_3$ is a handlebody among the $J_i$.   But both of these regions are homeomorphic to $\{\text{surface}\}\times I$, and thus not handlebodies.  Thus $n_i\leq 2$.  If $n_i=2$, then the three-manifold consists of three components: $J_1$, $J_2$  and the region between $S_1$ and $S_2$ (where $\partial J_1=S_1$ and $\partial J_2=S_2$).  It follows from Scharleman-Thompson's classification of Heegaard splittings of $\{\text{surface}\}\times I$ \cite{ST1} that the only way to obtain an irreducible splitting by adding handles back to $S_1\cup S_2$ is by attaching a single vertical handle joining $S_1$ to $S_2$.  But if this is the case one can find two disjoint curves on the resulting surface giving a weak reduction.   Thus $n_i=1$. 
\end{proof}

\begin{remark}
Strong irreducibility is essential here.  In the genus three splitting of $T^3$ one can perform two compressions, one into each handlebody and obtain two parallel tori.  Thus the multiplicity of a torus obtained in a min-max process might be two.  
\end{remark}

Let us now explain Pitts-Rubinstein's sketch of Conjecture \ref{pitts}.   Since in Theorem \ref{SS} the minimal surface obtained by min-max methods could consist of several components, the idea of Pitts-Rubinstein was to iterate the min-max procedure until one obtained a surface isotopic to $H$. Roughly speaking, their argument was as follows (cf. \cite[Theorem 1.8]{Ru})).  By strong irreducibility, any degeneration of the min-max sequence could only be along compressions into \emph{one} of the handlebodies and the positive genus surfaces have multiplicity $1$ by Proposition \ref{mult1}.   If such degeneration occurs, one then can remove the handlebodies bounded by the several minimal surfaces to obtain a manifold with minimal boundary $M'$.   As one of the minimal boundary components should have index $1$, one could minimize area for the unstable component of $\partial M'$ into $M'$ to obtain a new manifold $M''$ with stable boundary.  One then applies min-max to the compression body $M''$ and iterates.  Since $M''$ has stable boundary, and the min-max limit should always have an unstable component, at each stage of the iteration the manifold shrinks.   If the process does not stop, one obtains infinitely many nested minimal surfaces with bounded genus, giving rise to a Jacobi field.  If the metric is bumpy (which White proved is a generic condition) then this gives a contradiction.  Thus the process stops after finitely many steps at a minimal surface isotopic to the Heegaard surface.

The argument sketched by Pitts-Rubinstein was incomplete on two points.  First, they assumed that the limit is achieved after compressions.  This was proved by the third-named author in Theorem \ref{genusbounds}.  Secondly, in order to run the iteration, one needs to apply the min-max theorem to a subdomain $M''$ of the manifold with stable minimal boundary $\partial M''$.  The key claim is that one can obtain a minimal surface in the \emph{interior} of such a subdomain.  What could go wrong is that the min-max procedure just gives rise to the boundary $\partial M''$ where some two-sphere component may have positive integer multiplicity\footnote{In hyperbolic manifolds there are no minimal two-spheres (see Lemma \ref{hyperbolicareabound}) and thus the proof of the conjecture is much simpler.}.

\begin{proposition}[Min-max with stable boundary \cite{KLS}]\label{stableboundary}
Let $M$ be a manifold with boundary consisting of two compression bodies $C_1$ and $C_2$ glued together along a strongly irreducible Heegaard surface $\Sigma$.  Suppose each component of $\partial M$ is \emph{strictly stable}.  Then $M$ contains in its \emph{interior} a minimal surface obtained from surgeries on $\Sigma$ of index at most $1$.  
\end{proposition}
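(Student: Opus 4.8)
The plan is to run the Simon--Smith min--max construction (Theorem \ref{SS}) on the Heegaard sweepout determined by $\Sigma$ and then to use the strict stability of $\partial M$ to push the resulting minimal surface off the boundary into $\inte M$. First I would build the sweepout: since $C_1$ and $C_2$ are compression bodies meeting along $\Sigma=\partial_+C_1=\partial_+C_2$, each $C_i$ is swept out by surfaces interpolating between a spine $K_i$ (consisting of $\partial_-C_i$ together with a graph) and $\Sigma$, and concatenating these along $\Sigma$ gives a sweepout $\{\Sigma_t\}_{t\in[0,1]}$ of $M$ with $\Sigma_0=K_1$, $\Sigma_1=K_2$ and $\Sigma_{1/2}$ isotopic to $\Sigma$. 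Let $\Lambda$ be its saturation and $W=W(M,\Lambda)$. The next step is non-triviality, i.e. $W>\max(\area\partial_-C_1,\area\partial_-C_2)$: since $\Sigma$ separates $M$ into $C_1$ and $C_2$, every slice separates $M$, and a barrier/isoperimetric argument using that each component of $\partial M$ is a strictly stable (hence locally mean convex) minimal surface shows that some slice must enclose a definite fraction of $\vol(M)$ on each side and so have area strictly above the boundary areas; this verifies the hypothesis of Theorem \ref{SS}. Then Theorem \ref{SS} yields a min--max sequence converging to $V=\sum_j n_j\Gamma_j$ with $W=\sum_j n_j\area(\Gamma_j)$; Theorem \ref{genusbounds} says $V$ is obtained from $\Sigma$ by finitely many compressions, so each $\Gamma_j$ is a surgery on $\Sigma$; Proposition \ref{indexbounds} with $\dim X=1$ bounds the total index of the orientable components by $1$; and Proposition \ref{mult1} forces $n_j=1$ on every orientable $\Gamma_j$ of positive genus.

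The core of the argument is to show that at least one $\Gamma_j$ lies in $\inte M$; since $\partial M$ is a strictly stable minimal surface, the strong maximum principle forces any minimal surface in $M$ either to equal a component of $\partial M$ or to be disjoint from $\partial M$, so it is enough to exclude the possibility that every $\Gamma_j$ is a component of $\partial M=\partial_-C_1\sqcup\partial_-C_2$. Here I would invoke strong irreducibility exactly as in the proof of Proposition \ref{mult1}: the surgered surface produced by Theorem \ref{genusbounds} is obtained from $\Sigma$ by compressions, each into $C_1$ or into $C_2$, and strong irreducibility forbids essential compressions into both sides, so this surface can only be parallel copies of components of one $\partial_-C_i$ (say $\partial_-C_1$) together with $2$-spheres, and can never account for $\partial_-C_2$. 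If all boundary components appearing in $V$ have positive genus, Proposition \ref{mult1} makes every multiplicity $1$, whence $W=\sum_j n_j\area(\Gamma_j)\le\area(\partial_-C_1)$, contradicting the non-triviality estimate. The one case this leaves open is that $V\subseteq\partial M$ and some component of $V$ is a $2$-sphere carried with multiplicity $\ge 2$ --- geometrically, a thin tube of the sweepout degenerating onto a doubled sphere component of $\partial M$.

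Disposing of this last case is the main obstacle, and it is precisely where strict stability of $\partial M$ is indispensable. When $M$ has no minimal $2$-sphere at all --- in particular in the hyperbolic setting of the intended applications, by Lemma \ref{hyperbolicareabound} --- there is nothing to do and the proof is finished. In general I would use a catenoid-type estimate in the spirit of Ketover--Marques--Neves: near a strictly stable $2$-sphere $S\subseteq\partial M$ one can build a local sweepout of a collar of $S$ that crosses the configuration ``$nS$'' with maximal area strictly below $n\,\area(S)$, replacing the pinching neck predicted by the naive family with a genuine catenoidal neck whose area gain is more than compensated by the area deficit coming from strict stability of $S$; splicing this into the global sweepout produces a competitor with $\sup_t\area(\Sigma_t)<W$, contradicting the definition of $W$. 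Hence $V$ must have a component in $\inte M$, which by construction is a surgery on $\Sigma$ of index at most $1$, proving the proposition.

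I expect the catenoid estimate in the final step to be the technically delicate point --- one must construct the competitor family so that the strict area inequality survives and so that it glues smoothly into the ambient sweepout --- whereas the non-triviality estimate and the strong-irreducibility bookkeeping, though they need care, should follow fairly directly from Theorems \ref{SS} and \ref{genusbounds} and Propositions \ref{indexbounds} and \ref{mult1}.
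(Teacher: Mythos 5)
Your setup through the invocation of Theorems \ref{SS}, \ref{genusbounds}, Propositions \ref{indexbounds} and \ref{mult1}, the non-triviality estimate from strict stability, and the reduction to ruling out $V\subseteq\partial M$ with a multiplicity $\ge 2$ sphere all follow the paper's route. Where you diverge is in the hard step, and there you have a real gap.

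You propose to rule out the degenerate case by a catenoid estimate: build a local competitor near the stable sphere $S$ with max area strictly below $k\,\area(S)$, then ``splice it into the global sweepout.'' But that splicing is exactly the nontrivial point, and you have not supplied it. The issue is not just area: you must show that after cutting out the portion of the minimizing family near $k\Gamma$ and inserting whatever local family you construct, the result is still a \emph{sweepout of all of $M$} beginning at $\Gamma$ and ending at a point. This is not automatic, because a priori the minimizing family may approach $k\Gamma$ in the middle while the two outer pieces individually fail to sweep out $M$. The paper's sketch hinges on a topological/parity observation you are missing: if $\{\Sigma^i_t\}$ is close to $k\Gamma$ on $[t_0-\varepsilon_i,t_0+\varepsilon_i]$, then (for $k$ even) $\{\Sigma^i_t\}_{t\in[0,t_0-\varepsilon_i]}$, and (for $k$ odd) $\{\Sigma^i_t\}_{t\in[t_0+\varepsilon_i,1]}$, is already a sweepout of $M$ minus a thin collar of $\partial M$. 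Only with that observation does it suffice to append an area-controlled ``interpolating family'' supported in the collar, running from the near-$k\Gamma$ slice down to a point; and that interpolation is precisely Proposition \ref{main_deformation}, which the paper proves by the squeezing/Lightbulb-theorem argument rather than by a catenoid estimate. Put differently, the paper never needs to cross the configuration $k\Gamma$; it needs to retract it, and the strict stability is used to produce a collar $N_h(\Gamma)$ in which $\Gamma$ is the unique stable sphere, the arena for that retraction. A catenoid estimate in the Ketover--Marques--Neves spirit would also face the wrong sign of the problem: those estimates are built around an \emph{unstable} surface whose Jacobi field pays for the catenoid neck, whereas here $\Gamma$ is strictly stable, and the area deficit you want comes from pushing off by the lowest (positive) eigenfunction, not from a destabilizing direction --- so it is not clear the ``area gain from the neck is more than compensated'' as you assert.

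In short: your argument is correct and matches the paper up to the final case, but the final case needs (i) the parity observation that one half of the minimizing family is itself a sweepout of $M$ off a collar, and (ii) the explicit interpolation result Proposition \ref{main_deformation}, neither of which a catenoid estimate substitutes for without substantial further work.
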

In fact, to sketch the proof of Proposition \ref{stableboundary}, let us assume the manifold $M$ is as simple as possible, namely a three-ball bounded by a strictly stable minimal two-sphere $\Gamma$.
\vskip 7pt
\emph{Sketch of Proof:} Let $\{\Sigma_t\}_{t\in [0,1]}$ be a sweepout of $M$ by two-spheres so that $\Sigma_0=\Gamma$ and $\Sigma_1$ is the trivial point surface.  We consider the saturation of $\{\Sigma_t\}_{t\in [0,1]}$ and the corresponding min-max problem.   Because the boundary $\Gamma$ is strictly stable, it is not hard to show that $W>\mbox{area}(\Sigma_0)$.  Thus the sweepout is non-trivial and the Min-Max Theorem \ref{SS} applies.  We must rule out obtaining from Theorem \ref{SS} the surface $\partial M=\Gamma$ counted with multiplicity $k>1$.   We argue by contradiction.  If this happens, we will construct a competitor sweepout of $M$, also beginning at $\Gamma$ and ending at a point surface, but with all areas strictly less than $W$, contradicting the definition of $W$.  

Consider a sequence of sweepouts $\{\Sigma^i_t\}_{t\in [0,1]}$ so that $\sup_{t\in[0,1]} \mbox{area}(\Sigma^i_t)$ approaches the min-max value $W$ as $i\rightarrow\infty$.   Suppose that for $i$ large and some $t_0\in (0,1)$, the surfaces $\{\Sigma^i_t\}_{t\in [t_0-\varepsilon_i,t_0+\varepsilon_i]}$ are within a fixed $\eta>0$ neighborhood of the varifold $N=k\Gamma$.  Suppose for simplicity that $[t_0-\varepsilon_i,t_0+\varepsilon_i]$ is the only such interval.  Using the strict stability of the boundary, one can arrange \begin{equation}\label{arealow1}
\mbox{area}(\Sigma^i_{t_0-\varepsilon})< W 
\end{equation}
and
\begin{equation}\label{arealow2}
\mbox{area}(\Sigma^i_{t_0+\varepsilon})< W.
\end{equation}

The key observation is that either $\{\Sigma^i_t\}_{t\in [0,t_0-\varepsilon_i]}$ or $\{\Sigma^i_t\}_{t\in [t_0+\varepsilon_i,1]}$ must \emph{itself} be a sweepout of the entire manifold $M$ apart from a tiny tubular neighborhood of $\partial M$.   If $k$ is even, then the first is a sweep-out, and if $k$ is odd, then the second is.    Roughly speaking, the only way a family of spheres in the ball can begin at $\Gamma$ and end up close to $2\Gamma$, is if the family sweeps out the entire ball in the process.   Let's assume $k$ is even. 


We then need to construct a sweepout supported near $\partial M$ that begins at the surface $\Sigma^i_{t_0-\varepsilon_i}$ (which looks like $k$ copies of $\Gamma$) and ends at the zero or trivial point surface.  More precisely, given any $\delta>0$ we need an \emph{interpolating family} of surfaces $\{\Gamma_t\}_{t\in[0,1]}$ satisfying:

\begin{enumerate}[label=(\alph*)]
\item $\Gamma_0 = \Sigma^i_{t_0-\varepsilon_i}$ \label{1}
\item $\Gamma_1 = \mbox{ trivial point surface}$
\item$\mbox{area}(\Gamma_t)\leq \mbox{area}(\Sigma^i_{t_0-\varepsilon_i}) +\delta$. \label{3}
\end{enumerate}

Choosing $\delta$ appropriately small, we can concatenate $\{\Sigma^i_t\}_{t\in [0,t_0-\varepsilon_i]}$ and $\{\Gamma_t\}_{t\in[0,1]}$ to obtain a new sweepout of $M$ with all areas less than $W$ (thanks to \eqref{arealow1} and \eqref{arealow2}).  This gives a contradiction to the definition of width.  The conclusion is that $M$ contains in its interior a minimal surface (with index at most $1$ by earlier work of Marques-Neves \cite{MN}).  

To make the desired interpolation, a key point is that by the strict stability of $\Gamma$, by pushing off by the lowest eigenfunction of the stability operator, one can find a neighborhood of it $N_{h}(\Gamma)$ (diffeomorphic to $\mathbb{S}^2\times [0,1]$) in which $\Gamma$ is the \emph{unique} stable surface.  It would be natural to use the mean curvature flow to flow any essential two-sphere in this neighborhood toward the core stable sphere (and flow inessential spheres toward a point) but it is not yet known how to flow past singularities that the flow may develop.   

Nevertheless, we have the following:

\begin{proposition}[Mean Curvature Flow ``By Hand",  \cite{KLS}] \label{main_deformation}
Let $\Gamma$ be a strictly stable two-sphere.  Let $\Sigma$ be a smooth embedded two-sphere contained in $N_{h}(\Gamma)=\mathbb{S}^2\times [0,1]$.  For every $\delta>0$  there exists an isotopy $\Sigma_t \subset  N_{h}(\Gamma)$ with 
\begin{enumerate}
\item $\Sigma_0 = \Sigma$
\item $\Sigma_1$ is either equal to $\Gamma$ or else $\Sigma_1$ is contained in a ball 
of arbitrarily small radius
\item $\mbox{area}(\Sigma_t) \leq \mbox{area}(\Sigma)+\delta$ for all $t$. \label{eq: delta}
\end{enumerate}
\end{proposition}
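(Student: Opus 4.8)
The plan is to turn strict stability into a mean-convex foliation of $N_h(\Gamma)$ and then to slide $\Sigma$ down to $\Gamma$ (or into a small ball) along this foliation through a finite sequence of elementary area-nonincreasing moves — a "hand-made" substitute for the mean curvature flow one would like to use but cannot push past singularities.

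First I would set up the foliation. Let $\phi>0$ be the first eigenfunction of the stability operator of $\Gamma$; since $\Gamma$ is strictly stable, for $h$ small the normal graphs $\Gamma_t=\{\exp_p(t\,\phi(p)\,\mathbf n(p)):p\in\Gamma\}$, $t\in[-1,1]$, foliate $N_h(\Gamma)=\mathbb S^2\times[0,1]$ with $\Gamma_0=\Gamma$, and for $t\ne 0$ the mean curvature vector of $\Gamma_t$ points \emph{strictly} toward $\Gamma_0$. Thus $N_h(\Gamma)$ is a strictly mean-convex domain foliated by spheres whose only minimal leaf is $\Gamma$; the maximum principle applied at the extrema of the leaf coordinate restricted to any closed minimal surface then shows $\Gamma$ is the \emph{unique} closed minimal surface in $N_h(\Gamma)$. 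It is exactly here that strict stability, not mere stability, is needed: it makes the mean convexity strict, so that the moves below strictly lower area away from $\Gamma$. Write $\pi\colon N_h(\Gamma)\to[-1,1]$, $\pi^{-1}(t)=\Gamma_t$, for the leaf coordinate.

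Since every embedded $2$-sphere in $\mathbb S^2\times[0,1]$ is separating, either $\Sigma$ bounds a ball in $N_h(\Gamma)$ (the \emph{inessential} case) or $\Sigma$ separates the two ends and is isotopic in $N_h(\Gamma)$ to $\Gamma$ (the \emph{essential} case); in the latter case $\mbox{area}(\Sigma)\ge\mbox{area}(\Gamma)$, since otherwise minimizing area in the isotopy class of $\Sigma$ would produce an essential minimal surface smaller than $\Gamma$, contradicting uniqueness. After an arbitrarily $C^0$-small isotopy costing area $<\delta/3$, put $\pi|_\Sigma$ in Morse position and induct on a complexity of $\Sigma$ — say the number of critical points of $\pi|_\Sigma$, or $\max\pi|_\Sigma-\min\pi|_\Sigma$ — to push $\Sigma$ into a prescribed thin slab $\pi^{-1}([-\varepsilon,\varepsilon])$, keeping $\mbox{area}\le\mbox{area}(\Sigma)+\delta$ throughout. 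The moves are: (i) for a regular value $t_0$ just below the largest critical value of $\pi|_\Sigma$ the part of $\Sigma$ above $\Gamma_{t_0}$ is a single disk cap meeting $\Gamma_{t_0}$ in a circle, and one slides it across $\Gamma_{t_0}$ toward $\Gamma_0$, replacing it by a pushed-off copy of the leaf sub-disk it bounds — area-nonincreasing by mean convexity toward $\Gamma_0$ — and symmetrically at the bottom; (ii) when a slide would force a self-tangency, one first cuts a thin neck of $\Sigma$ into two small caps, strictly decreasing area. Once $\Sigma$ has been driven into the thin slab, in the essential case it is connected with area $\le\mbox{area}(\Gamma)+\delta$ and hence is a graph over $\Gamma$ (a non-graphical essential sphere in a thin slab has area $\ge 2\,\mbox{area}(\Gamma)-o(1)$), so one pushes the graph onto $\Gamma$, strict stability of $\Gamma$ keeping the intermediate areas $\le\mbox{area}(\Gamma)+o(1)\le\mbox{area}(\Sigma)+\delta$; in the inessential case the same descent makes $\Sigma$ have arbitrarily small area, so it bounds a ball of arbitrarily small radius by monotonicity, and one contracts it inside that ball.

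The hard part will be the uniform bound $\mbox{area}(\Sigma_t)\le\mbox{area}(\Sigma)+\delta$ \emph{across the topology changes}: mean curvature flow would supply this automatically, and doing it by hand amounts to arranging every surgery to be genuinely area-nonincreasing — which forces one to prove that at the heights where one operates the pieces of $\Sigma$ being modified really are thin, so that replacing a cap by a leaf-disk, or cutting a neck, loses rather than gains area. Producing such thin pieces, bounding the number of moves so the induction terminates, and keeping every intermediate surface embedded, are the technical core of the proof; strict stability is what makes all of these area losses strict and quantitative.
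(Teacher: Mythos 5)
Your proposal takes a genuinely different route from the paper's, and it has several substantive gaps that would block it.

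The paper's strategy (sketched after the proposition statement) is: first press $\Sigma$ into an arbitrarily thin slab around $\Gamma$ by the \emph{global} contraction along the eigenfunction direction — a $1$-Lipschitz map, hence area-nonincreasing, and it does not attempt to simplify the Morse structure of $\pi|_\Sigma$ at all; then cover $\Gamma$ by small Euclidean balls and inside each ball use the radial ``shrink–isotope–rescale'' trick to open necks and folds, arriving at $m$ nearly parallel copies of $\Gamma$ joined by thin necks; finally invoke the Lightbulb theorem to unknot/unlink those necks so that $m$ can be reduced by isotopy to $1$ or $0$. Your plan instead tries to simplify the Morse picture of $\pi|_\Sigma$ directly by sliding the topmost/bottommost caps through leaves, with neck surgeries as an auxiliary move.

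Three concrete problems. (1) Your move (i) — replacing the cap $D$ above $\Gamma_{t_0}$ by the leaf sub-disk $D'\subset\Gamma_{t_0}$ with $\partial D'=\partial D$ and pushing it in — is not area-nonincreasing, and ``mean convexity toward $\Gamma_0$'' does not supply it. Mean convexity says the \emph{leaf} moves inward under MCF; it says nothing about the area comparison between an arbitrary cap and the leaf sub-disk sharing its boundary. Indeed $D'$ lies in a strictly mean-convex (hence non-minimal) sphere, so $D'$ is not even a critical point of area in its boundary class; if $\partial D$ is a large circle on $\Gamma_{t_0}$, $D'$ can have area far exceeding that of $D$. Making such a replacement area-nonincreasing is exactly what the paper's press-to-a-thin-slab-then-squeeze-in-a-ball device is for, and it is missing here. (2) Your move (ii), ``cuts a thin neck of $\Sigma$ into two small caps,'' is a compression, not an isotopy: a two-sphere with a pinched neck becomes two two-spheres at the pinching time. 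Since the proposition demands an isotopy $\Sigma_t$, this move is disallowed; necks must be \emph{removed by isotopy}, and that is precisely where the topological difficulty lives. (3) You never address the fact that, after pressing into a thin slab, the remaining necks can be knotted and linked in $\mathbb S^2\times[0,1]$. Undoing them by isotopy — so as to reduce the number of parallel sheets without ever exceeding the area budget — is nontrivial; the paper invokes the Lightbulb theorem for this, and your plan has no substitute.

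A smaller but real error: the claim that an essential sphere in a thin slab with area at most $\area(\Gamma)+\delta$ must be a graph over $\Gamma$ is false. A graph over $\Gamma$ with a single thin finger poking up and folding back contributes only $O(\varepsilon)$ extra area yet is not graphical, so ``non-graphical $\Rightarrow$ area $\geq 2\,\area(\Gamma)-o(1)$'' fails. Some local cleanup argument is required even in the last step, which is again what the paper's ball-by-ball squeezing accomplishes.
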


The idea in the proof of Proposition \ref{main_deformation} is motivated by the following.  Suppose one asks an analogous question in $\mathbb{R}^3$.  That is, suppose one is given two embeddings of two-spheres in $\Sigma_0$ and $\Sigma_1$ in $\mathbb{R}^3$ and one seeks an isotopy joining them that does that not increase area along the way.  By Alexander's theorem, there exists \emph{some} isotopy joining them (the trouble is that areas may have to go quite high in order to achieve this).  But we can enclose both surfaces in a large ball, shrink the ball to be tiny, do the isotopy in the tiny ball, and then rescale to unit size.  

In Proposition \ref{main_deformation} we must work in $N_h(\Gamma)$ (diffeomorphic to $\mathbb{S}^2\times [0,1]$) and thus we do not have such radial isotopies to exploit.  We can however press any sphere in $N_h(\Gamma)$ \emph{arbitrarily close} to $\Gamma$ in an area-decreasing fashion (again using the lowest eigenfunction of the stability operator).  Once it is pressed close enough, we cover $\Gamma$ with balls, and can then use the squeezing trick above in each ball to simplify the surface, opening any necks and folds in the process until the surface consists of some number $m$ parallel copies of $\Gamma$ joined by a thin set of possibly badly linked and nested ``necks."  Using the Lightbulb theorem, we can then open any knotted necks to bring $m$ down and iterate until $m=1$ or $m=0$.

This completes the sketch of the interpolation result Proposition \ref{main_deformation} and thus the proof of Pitts-Rubinstein's conjecture.

\section{Minimal surfaces with index at most one in $3$-manifolds}

For an index at most one minimal surface in a $3$-manifold, near most points, the surface is stable and pointwise curvature estimates apply.  Those curvature estimates show that on a small scale the surface looks like a small almost flat piece of a plane.   However, for a surface with index $\leq 1$ there can be a single small unstable neighborhood.  If there is, then we show that in that neighborhood, the surface looks like a scaled down catenoid centered around the neck.    The two sheets of the catenoid extend and remain almost flat and graphical over each other past the small scale and up to a fixed scale.  This is the content of the next theorem \cite{CG} that is stated for a unit ball in Euclidean space but holds with obvious modifications for a sufficiently small ball in any fixed $3$-manifold.   

\begin{Thm}   \label{t:t1}
There exists $\delta$, $c > 0$ such that the following holds: Given $C > 1$, $\mu > 0$, there exists $\epsilon > 0$ so that if $\Sigma\subset B_1 \subset \RR^3$ is a compact embedded minimal surface with $\partial \Sigma\subset \partial B_1$, index one and $B_{\epsilon} \cap \Sigma$ is unstable, then there is a simple closed geodesic $\gamma\subset B_{2\,\epsilon}\cap \Sigma$ of length $\ell$ so that 
\begin{itemize}
\item $\Sigma_c\setminus An_{\delta\,\ell}(\gamma)$ consists of two graphical annuli of functions with gradient at most one.
\item $An_{C\,\ell}(\gamma)$ is $\mu$-$C^2$ close to the corresponding annulus in an rescaled catenoid with neck of length $\ell$.
\end{itemize}
\end{Thm}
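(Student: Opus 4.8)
I would argue by contradiction and compactness, reducing the statement to the classification of complete embedded minimal surfaces of index one in $\RR^3$. So suppose the conclusion fails for some fixed $C>1$, $\mu>0$; taking $\epsilon=1/i$ produces compact embedded minimal surfaces $\Sigma_i\subset B_1$, of index one, with $\partial\Sigma_i\subset\partial B_1$ and $B_{1/i}\cap\Sigma_i$ unstable, but admitting no simple closed geodesic $\gamma_i\subset B_{2/i}\cap\Sigma_i$ satisfying both bullets. Replacing $\Sigma_i$ by the component carrying the instability (the others are stable), I may assume $\Sigma_i$ connected.

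The first step is to localize the curvature. Since $\Sigma_i$ has index one and $B_{1/i}\cap\Sigma_i$ is unstable, $\Sigma_i\setminus\bar B_{2/i}$ must be stable: otherwise two disjointly supported test functions of negative stability form would force index at least two. Schoen's curvature estimate for stable minimal surfaces in $\RR^3$ then gives a universal $C_S$ with $|A_{\Sigma_i}|(y)\le C_S/|y|$ for $4/i\le|y|\le 1/2$, so any curvature concentration occurs inside $B_{2/i}$. Since an unstable minimal surface in $B_{1/i}$ forces $\sup|A|\gtrsim i$ there, a Colding--Minicozzi-type point-selection produces a center $q_i\in B_{C_2/i}\cap\Sigma_i$ and a scale $s_i=|A_{\Sigma_i}|(q_i)^{-1}\to 0$ with $|A|\le 2s_i^{-1}$ on $B_{s_i/4}(q_i)\cap\Sigma_i$. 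The key claim here is that this blow-up is the \emph{only} one: a second concentration scale would yield a second non-flat complete minimal surface in a limit, hence (a non-flat complete minimal surface in $\RR^3$ being unstable) a second disjointly supported negative test function, contradicting index one. Granting this, $|A_{\hat\Sigma_i}|$ is uniformly bounded on every fixed ball, where $\hat\Sigma_i:=s_i^{-1}(\Sigma_i-q_i)$ (this also forces $s_i$ comparable to $1/i$), and a subsequence of $\hat\Sigma_i$ converges smoothly on compact subsets to a complete connected embedded minimal surface $\Sigma_\infty\subset\RR^3$ with $|A_{\Sigma_\infty}|(0)=1$ and index at most one.

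Next I would identify the limit. Since $\Sigma_\infty$ is non-flat it is not stable, hence of index exactly one; by Fischer-Colbrie it then has finite total curvature, and the classification of complete embedded minimal surfaces of finite total curvature and index one forces $\Sigma_\infty$ to be a catenoid $\mathcal C$. Normalize $\mathcal C$ so that $0$ lies on its waist $\gamma_\infty$, a closed geodesic of fixed length $L_0$ which is a strict, hence non-degenerate, local minimum of length and outside an annular neighborhood of which $\mathcal C$ splits into two graphs with gradient tending to $0$. For $i$ large, smooth convergence then produces a simple closed geodesic $\hat\gamma_i\subset\hat\Sigma_i$ near $\gamma_\infty$ with $\ell(\hat\gamma_i)\to L_0$ (persistence of the non-degenerate waist); unrescaling gives a simple closed geodesic $\gamma_i=q_i+s_i\hat\gamma_i\subset B_{2/i}\cap\Sigma_i$ of length $\ell_i=s_i\,\ell(\hat\gamma_i)$. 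Since $\An_{C\ell_i}(\gamma_i)$ corresponds to a \emph{bounded} region of $\hat\Sigma_i$, smooth convergence to $\mathcal C$ at once gives the $\mu$-$C^2$ closeness to the matching annulus of the catenoid with neck $\ell_i$ (the second bullet); and choosing $\delta$ small enough, depending only on $\mathcal C$, that $\An_{\delta L_0}(\gamma_\infty)$ swallows the part of $\mathcal C$ where the gradient exceeds $1/2$, the same convergence gives the two-graph conclusion on $|y-q_i|\lesssim 1/i$.

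What remains, and what I expect to be the main obstacle, is propagating this picture from the microscopic scale $\sim 1/i$ out to the fixed macroscopic scale $c$: on $B_c\setminus B_{2/i}$ the surface is a pair of stable minimal annuli with gradient $\le 1/2$ on their inner boundary, and one must show --- using Schoen's estimate, the interior gradient estimate for the minimal surface equation, and the precise decay of the catenoid end --- that they remain graphical with gradient $\le 1$ and acquire no extra handles or twisting all the way out to $B_c$, for $c$ a sufficiently small universal constant. Once this is done, both bullets hold for $\gamma_i$ for large $i$, contradicting the choice of the $\Sigma_i$ and proving the theorem. A secondary delicate point is the ``single bubble'' step that upgrades index-one control into uniform curvature bounds at the blow-up scale; the index-one classification of complete embedded minimal surfaces is invoked as a black box.
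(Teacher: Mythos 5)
Your overall strategy --- blow up at the index-concentration point, identify the limit as a catenoid via Fischer--Colbrie finiteness and the L\'opez--Ros classification of index-one complete embedded surfaces, read off the geodesic neck from the catenoid waist, and then propagate graphicality outward --- matches the scheme described in the preamble of \S 5 of this survey (which, to be clear, states Theorem \ref{t:t1} and cites \cite{CG} rather than proving it in-text). The small-scale steps (point selection, the observation that disjointly supported negative test functions force a single blow-up scale, smooth subconvergence to the catenoid, persistence of the non-degenerate waist geodesic) are plausibly complete modulo standard care, and they do deliver the second bullet and the choice of $\delta$.

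The genuine gap is exactly the one you flag and then leave open: propagating the two-sheeted graphical picture from scale $\sim\ell$ out to the \emph{fixed} scale $c$. This is the core quantitative content of the theorem --- the survey emphasizes that ``it is important that this theorem gives information all the way up to the fixed scale $c$\dots where the radius $c$ is independent of $\ell$'' --- and your contradiction framework does not supply it. If graphicality were first to fail at a radius $r_i$ with $\ell_i\ll r_i\ll c$, a rescaling at scale $r_i$ kills the neck: the two catenoid ends, being vertical distance $O(\log(r_i/\ell_i))$ apart, collapse after division by $r_i$, so the blow-up limit is a multiplicity-two plane and no contradiction emerges from the compactness step alone. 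One needs a \emph{direct} argument on $B_c\setminus B_{O(\ell)}$: Schoen's estimate gives $|A|\le C_S/|y|$ there and Gauss--Bonnet then caps total curvature, ruling out handles in $\Sigma_c\setminus An_{\delta\ell}(\gamma)$, but $|A|\le C_S/|y|$ by itself is consistent with a helicoid-like spiral and cannot produce a two-valued graph. To exclude spiraling and keep the sheets graphical with gradient $\le 1$ out to a uniform $c$, you need the Colding--Minicozzi structure theory for stable embedded minimal annuli (graph extension, one-sided curvature estimate, chord-arc bounds, as in \cite{CM2}, \cite{CM4}, \cite{CM5}), not merely the interior gradient estimate. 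Relatedly, the universal constant $c$ is never actually produced in your argument; it has to come out of this propagation step, not out of the compactness step, and your ``for $c$ a sufficiently small universal constant'' at the end is a placeholder for the missing lemma.

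Two smaller points. Your parenthetical that the argument ``also forces $s_i$ comparable to $1/i$'' is neither needed nor obviously true: instability of $B_{1/i}\cap\Sigma_i$ gives $s_i\lesssim 1/i$ but does not prevent $s_i\ll 1/i$, and the theorem makes no claim relating $\ell$ to $\epsilon$. And the ``single bubble'' step is stated heuristically; to make it rigorous you should exhibit the annular separation of scales explicitly and then appeal to do Carmo--Peng / Fischer-Colbrie--Schoen to manufacture the second disjointly supported negative direction, rather than asserting it.
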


In this theorem $\Sigma_c$ is the connected component of $B_c\cap \Sigma$ containing $\gamma$ and $An_{s}(S)$ are the points in $\Sigma$ with intrinsic distance at most $s$ to a subset $S$ of $\Sigma$.

   \begin{figure}[htbp]
\centering\includegraphics[totalheight=.38\textheight, width=.50\textwidth]{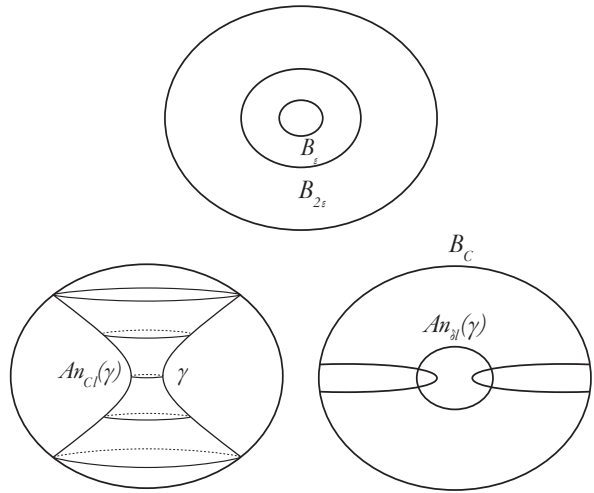}
\caption{The two scales:  (1) The structure on the scale where the index concentrates.  (2) The larger fixed scale.}   
  \end{figure}

\vskip2mm
This result gives a basic local structure for index $\leq 1$ minimal surfaces.   It is important that this theorem gives information all the way up to the fixed scale $c$ and the entire all $B_c$, where the radius $c$ is independent of $\ell$.  This theorem implies that locally such surfaces look like one or multiple almost flat and parallel sheets in addition to possibly a single pair of almost flat sheets joined by a tiny catenoidal neck.   This gives essentially immediately: Any sequence of closed embedded index one minimal surfaces has a subsequence that converges to a smooth minimal lamination with possibly one unstable leaf.  An unstable leaf can only occur if the index of the surfaces themselves does not concentrate.    When index concentrates different parts of the surfaces collapse to a sheet with multiplicity two.    This is the following theorem (cf. Corollary 2.2 \cite{CG}).

\begin{Thm}   \label{t:t2}
Let $N^3$ be a complete hyperbolic $3$-manifold and $\Sigma_i\subset N$ a sequence of closed embedded minimal surfaces with index $\leq 1$. Then a subsequence converges to a smooth minimal lamination $L$. Moreover, at most one leaf of $L$ is unstable and if $L$ is an unstable leaf, then it is isolated.
\end{Thm}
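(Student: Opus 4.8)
The plan is to obtain the limiting lamination by a standard compactness argument fueled by the local structure result Theorem \ref{t:t1}, and then to control the number and isolation of unstable leaves by a curvature‐concentration dichotomy. First I would fix a point $p\in N$ and a small radius $r>0$ (smaller than the injectivity radius and small enough that Theorem \ref{t:t1} applies in $B_r(p)$ with the obvious modifications for a fixed $3$-manifold). There are two cases according to whether the index of $\Sigma_i$ ``sees'' the ball $B_r(p)$ in the limit. If for some subsequence each $\Sigma_i\cap B_r(p)$ is stable, then the usual pointwise curvature estimates for stable minimal surfaces (Schoen-type estimates, valid in a fixed ambient $3$-manifold) give uniform $|A|$ bounds on a slightly smaller ball; by elliptic estimates and Arzel\`a--Ascoli one extracts a subsequence converging smoothly, with finite multiplicity, to a minimal lamination of $B_{r/2}(p)$, all of whose leaves are stable. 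If instead the index concentrates near $p$, i.e.\ for every $\epsilon>0$ the ball $B_\epsilon(p)\cap\Sigma_i$ is unstable for large $i$, then Theorem \ref{t:t1} furnishes, on a fixed scale $c$ independent of $i$, a decomposition of the relevant component into two almost-flat graphical annuli joined by a tiny catenoidal neck of length $\ell_i\to 0$; away from the shrinking necks the curvature is again uniformly bounded, so one again extracts a smooth limit lamination on the fixed-scale ball, but now the two sheets of the catenoid collapse onto a single leaf with multiplicity two, and that leaf is the only place where $|A|$ of $\Sigma_i$ fails to be uniformly bounded.

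Next I would run a diagonal/exhaustion argument: cover $N$ by countably many such balls $B_{r_k/2}(p_k)$, pass to a subsequence that converges on each ball, and check that the local limit laminations agree on overlaps (this is automatic since smooth convergence is local), yielding a global smooth minimal lamination $L$ of $N$. The leaves are complete embedded minimal surfaces. It remains to prove: (i) at most one leaf of $L$ is unstable, and (ii) an unstable leaf is isolated. For (i): an unstable leaf $\Lambda$ of $L$ can only arise at a ball where the index did \emph{not} concentrate — at index-concentration balls the limit leaf is stable (it is a catenoid-collapse sheet, and stability there follows from the two-sheeted multiplicity structure, cf.\ the standard fact that a multiplicity-two limit of a sequence with bounded index is stable, via the logarithmic cutoff / catenoidal-neck test function argument). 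Since $\operatorname{index}(\Sigma_i)\le 1$, a nonconcentrating ball can carry at most one unit of index, and a single unit of index cannot be spread over two disjoint unstable regions; hence there is at most one ball-region producing instability, and since leaves of a lamination are either equal or disjoint, this forces at most one unstable leaf globally. For (ii): if an unstable leaf $\Lambda$ were a limit of other leaves $\Lambda_j\subset L$ of $L$, one could produce, by Schoen-style curvature estimates on the stable leaves $\Lambda_j$ accumulating on $\Lambda$, a nontrivial positive Jacobi field on $\Lambda$ (the normal graphing functions of $\Lambda_j$ over $\Lambda$, suitably normalized, converge to a nonzero solution of $L_\Lambda u = 0$ with a sign); a positive Jacobi field on a closed minimal surface forces it to be stable, contradicting that $\Lambda$ is unstable — so $\Lambda$ is isolated. (If $\Lambda$ is noncompact one uses the hyperbolic area bound, e.g.\ Lemma \ref{hyperbolicareabound}, together with the index-$\le 1$ hypothesis to reduce to the relevant compact-exhaustion situation.)

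I expect the main obstacle to be step (i)–(ii), specifically ruling out a \emph{second} unstable leaf and proving isolation, rather than the compactness itself, which is by now routine given Theorem \ref{t:t1}. The delicate point is that instability of a limit leaf can be ``inherited'' in two quite different ways — either because the index genuinely persists on a bounded-curvature piece (at most one such, by $\operatorname{index}\le 1$), or spuriously from sheets colliding — and one must show the collision case always yields a \emph{stable} leaf. This is exactly where Theorem \ref{t:t1} is essential: it pins the collision geometry to a single catenoidal neck per component, so the collapsed leaf is two-sheeted and the balance of forces across the vanishing neck gives stability. Using the hyperbolicity of $N$ one also rules out minimal two-spheres (Lemma \ref{hyperbolicareabound}), which streamlines the multiplicity bookkeeping. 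The isolation statement is then the standard ``limit of leaves $\Rightarrow$ Jacobi field $\Rightarrow$ stable'' principle, applied in the negatively curved setting where there are no flat obstructions.
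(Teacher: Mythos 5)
Your proposal follows the paper's own sketch: use Theorem~\ref{t:t1} to get the local dichotomy (uniform curvature bounds on stable pieces versus a shrinking catenoidal neck when the index concentrates), pass to a global lamination limit by a covering/diagonal argument, note that at a concentrating neck the collapsed multiplicity-two leaf is stable (equivalently, since the index~$\leq 1$ is absorbed by the neck, the complement of the neck is stable and the leaf is a limit of stable pieces), use index~$\leq 1$ to forbid two disjoint unstable regions and hence two unstable leaves, and finish isolation via the usual accumulating-leaves-$\Rightarrow$-positive-Jacobi-field-$\Rightarrow$-stable argument. This is essentially the same route the paper indicates (it defers the details to Corollary~2.2 of \cite{CG}), so your proposal is correct and matches the intended proof.
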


Theorem \ref{t:t2} actually holds for any $3$-manifold, compact or not, and without any assumption on the curvature.  To prove this result for complete finite volume hyperbolic $3$-manifold we needed to know that index-$1$ surfaces lie in a bounded set.  Indeed, there is the following result independently proved in \cite{CG} and \cite{CHMR}, \cite{CHMR2}.  Actually the result in \cite{CHMR} holds for any closed minimal surface.

\begin{Thm}  \label{t:t3}
Let $N^3$ be a finite volume hyperbolic $3$-manifold and $x\in N$ be a fixed point, then there exists an $R > 0$ so that any closed embedded index-$1$ minimal surface is contained in the ball $B_R(x)$.
\end{Thm}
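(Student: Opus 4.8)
The plan is to localize the question inside the cusps of $N$ and then run a maximum principle argument. A finite volume hyperbolic $3$-manifold decomposes as $N=N_{\ge\epsilon}\cup C_1\cup\cdots\cup C_m$, where $N_{\ge\epsilon}$ is the compact thick part and each cusp $C_j$ is isometric to a quotient of $T^2_j\times[0,\infty)$ carrying the metric $d\rho^2+e^{-2\rho}g_j$ for a flat metric $g_j$ on $T^2_j$; the level tori $T^{(j)}_s=T^2_j\times\{s\}$ are flat and umbilic with principal curvatures $1$, so each has constant mean curvature $2$, with mean curvature vector pointing deeper into the cusp. Since there are finitely many cusps and $N_{\ge\epsilon}$ is compact, it suffices to produce a universal $s_0$ such that no closed embedded index $\le 1$ minimal surface reaches $\{\rho\ge s_0\}$ in any $C_j$; then every such surface lies in the fixed compact set $N_{\ge\epsilon}\cup\bigcup_j(T^2_j\times[0,s_0])$, and $R$ is chosen so that this set lies in $B_R(x)$.

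The heart of the matter is a pointwise computation. On any minimal surface $\Sigma$ meeting a cusp, write $\nu$ for a unit normal and use $\vec H_\Sigma=0$, $|\nabla^N\rho|=1$, $\Delta_N\rho=-2$, and $\Hess_N\rho=-(g-d\rho\otimes d\rho)$ (the last two because horospheres in $\BH^3$ are umbilic with principal curvatures $1$) to get
\begin{equation}
\Delta_\Sigma\rho=\Delta_N\rho-\Hess_N\rho(\nu,\nu)=-1-\langle\nu,\nabla\rho\rangle^2\le -1,
\end{equation}
equivalently $\Delta_\Sigma(e^{\rho})\le 0$ and $\Delta_\Sigma(e^{-\rho})=2e^{-\rho}$. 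Thus $\rho|_\Sigma$ is strictly superharmonic, hence has no interior minimum; in particular no closed minimal surface lies entirely in a cusp, and every closed $\Sigma$ must exit each $C_j$. To control how far $\Sigma$ can reach before exiting, set $A(s)=\area(\Sigma\cap\{\rho\ge s\})$. The divergence theorem on $\Sigma\cap\{\rho\ge s\}$ together with $\Delta_\Sigma\rho\le -1$ and $|\nabla^\Sigma\rho|\le 1$ gives $\length(\Sigma\cap\{\rho=s\})\ge A(s)$, while the coarea formula gives $A'(s)\le-\length(\Sigma\cap\{\rho=s\})$. Hence $A'(s)\le -A(s)$ and $A(s)\le A(0)\,e^{-s}$: the area of $\Sigma$ in the cusp decays exponentially in the depth.

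It remains to upgrade the no-closed-minimal-surface-in-a-cusp statement to a \emph{uniform} penetration bound, and this is where I expect the real work to lie. The exponential decay reduces matters to controlling the interplay between the depth $s_{\max}$ and the area of $\Sigma$ near its deepest point $p$. The key extra geometric fact is that a minimal surface cannot shadow a horotorus: a small normal graph $u$ over $T^{(j)}_{s}$ has mean curvature $\approx 2+\Delta_{T}u$, whose vanishing is obstructed by $\int_T\Delta_T u=0$, so near $p$ the surface must fold or develop a neck, of definite size relative to the (tiny) cross-section, forcing a definite amount of area there. The index hypothesis enters through the local structure theorem \ref{t:t1}: away from at most one small catenoidal neck $\Sigma$ is stable and obeys Schoen's curvature estimate; this, combined with the horotorus obstruction (which prevents $|A_\Sigma|$ from being small near $p$), forces the unstable neck itself to sit deep in the cusp, within bounded intrinsic distance of $p$, and then that a catenoidal neck can fit into the thin cross-section at depth $\approx s_{\max}$ bounds $s_{\max}$. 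The natural way to organize all of this is a blow-up/compactness argument: assume a sequence of counterexamples with $s_{\max}\to\infty$, rescale at the deepest points — using that $\rho\mapsto\rho+a$ is an isometry of the cusp after rescaling the flat torus — apply \ref{t:t1}--\ref{t:t2} to pass to a limit minimal surface (or lamination) with a local maximum of the height in a flat $T^2\times\BR$, and extract a contradiction. I expect the main obstacle to be precisely the last point: because the cusp is self-similar the naive rescaling flattens the metric and makes the horizontal slice a \emph{degenerate} admissible limit rather than a contradiction, so the contradiction must instead be forced by the topology and area failing to fit (the genus and area of the $\Sigma$'s need not be bounded), and making that quantitative — with \ref{t:t1} applied at a fixed small scale independent of the injectivity radius as the main tool — is the crux of the argument.
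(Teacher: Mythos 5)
Your first two paragraphs are correct and track the paper's reduction: decompose $N$ into thick part plus cusps, and use the cusp coordinate $\rho$ to localize the problem. The computation $\Delta_\Sigma\rho=-1-\langle\nu,\partial_\rho\rangle^2\le -1$ is right, as is the conclusion that a closed minimal surface cannot lie entirely in a cusp (the minimum principle applied to $\rho|_\Sigma$); this is the paper's ``convexity of horospheres'' step. The exponential area decay $A(s)\le A(0)e^{-s}$ is also correct, but it is a detour: the paper never uses it, and by itself it cannot bound the maximal depth because $A(0)$ is not uniformly controlled.

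The real issue is the third paragraph, and you essentially diagnose it yourself: a blow-up at the deepest point at the injectivity-radius scale converges to a flat $T^2\times\BR$, where the horizontal slice is a perfectly good minimal surface, so no contradiction comes out of the compactness argument. That obstruction is genuine, and the paper does not take this route. The paper's argument goes through Theorem~\ref{t:t1} in a more direct, non-limiting way. At the deepest point $p$ the tangent plane of $\Sigma$ is the horotorus direction, and Theorem~\ref{t:t1} (together with pointwise stable curvature estimates away from the one possible catenoidal neck) says that $\Sigma$ extends from $p$ as a small-gradient graph out to a fixed extrinsic scale $c>0$ depending only on $N$, not on $\Sigma$. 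If $p$ sits at depth $s$ so large that the horotorus at that level has diameter much smaller than $c$, then this graphical sheet must wrap around the torus repeatedly. Embeddedness forces consecutive wraps to lie at distinct $\rho$-levels, i.e.\ the sheet spirals; it cannot close up into a torus, since you have already shown that no closed minimal surface sits entirely in a cusp. In the outward spiraling direction the sheet eventually exits the cusp, which is fine, but in the inward direction it reaches $\rho$-values strictly larger than $\rho(p)$, contradicting that $p$ was the deepest point. This yields an explicit bound on $s$ in terms of $c$ and the systole of the cusp torus, hence the desired $R$. So the missing idea is exactly this fixed-scale graphical-spiraling argument; the compactness/blow-up approach you sketch stalls for the reason you identify, and replacing it with the spiraling argument is where Theorem~\ref{t:t1} earns its keep.
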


  \begin{figure}[htbp]
\centering\includegraphics[totalheight=.38\textheight, width=.50\textwidth]{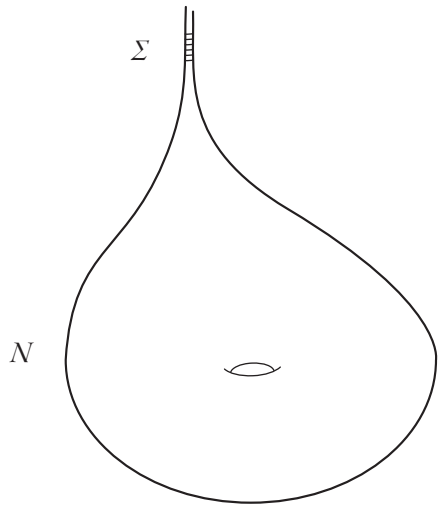}
\caption{If $\Sigma$ is closed and penetrate into a cusp, then it would begin to spiral if the index $\leq 1$.  This leads to a contradiction.}   
  \end{figure}

Here is the idea of the proof.  We will use the local structure theorem, Theorem \ref{t:t1}, to show that any closed index one minimal surface must lie within a bounded set.    Recall that the ends of a finite volume hyperbolic $3$-manifold are cusps.  Cusps are topologically a product of a torus with a half line.   The metric is such that the induced metric on the tori is flat, but as one goes further into the cusp, along the half line, the tori shrinks exponentially. 

The reason why such a surface cannot penetrate into a cusp is that if it did, since the surface is closed, there would be a point where it is deepest into the cusp.     From the convexity of horospheres it follows that the surface cannot be entirely contained in a cusp.   Following the surface around, starting at the point of deepest penetration, we move locally nearly orthogonal to the end of the cusp.  Beginning at the point of deepest penetration, as we move along, in one direction we would after a while come back further inside the thick part of the manifold; it cannot come back to itself since the surface cannot be entirely contained in the cusp.    However, in the opposite direction, we would come back nearly parallel to where we started off but further into the cups.  This contradicts that we started off at the deepest point of penetration.

These results are used in  \cite{CG} to show that for a hyperbolic $3$-manifold with finite volume there are finitely many branched surfaces that carry all closed index one embedded minimal surfaces.  The branched surfaces can be chosen to have curvature almost less than $-1$ and can be effectively constructed.    To quantify this we say that a branched surface is $\eta$-\emph{negatively curved} if all the sheets have sectional curvature $<\eta$.   Recall that minimal surfaces themselves in a hyperbolic manifold have curvature $\leq -1$ by the Gauss equation.   So the branched surfaces can be constructed to have almost the same upper curvature bound.  

\begin{Thm}  \label{t:t4}
If $N$ is a complete finite volume hyperbolic $3$-manifold and $\eta> -1$, then there exists finitely many effectively constructible $\eta$-negatively curved branched surfaces $B_1,\cdots,B_n$ such that any index $\leq 1$ closed embedded surface is carried by some $B_i$.  In particular any strongly irreducible Heegaard surface or is carried by one of these branched surfaces. 
\end{Thm}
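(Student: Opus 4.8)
\emph{Proof plan.} The plan is to combine the local structure theorem for index $\le 1$ minimal surfaces (Theorem \ref{t:t1}) with their a priori confinement to a fixed compact set (Theorem \ref{t:t3}), and to build the branched surfaces by a finite combinatorial procedure on a sufficiently fine triangulation; the assertion about strongly irreducible Heegaard surfaces then follows from the resolved Pitts--Rubinstein conjecture (Theorem \ref{pittsrubinstein}).

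First I would reduce to a purely minimal--surface statement. By Theorem \ref{t:t3} there is an effectively computable $R>0$, depending only on the geometry of the cusps and the thick part of $N$, so that every closed embedded index $\le 1$ minimal surface lies in the compact region $K := \overline{B_R(x)}$. By Theorem \ref{pittsrubinstein} a strongly irreducible Heegaard surface $S\subset N$ is, up to isotopy, either an index $\le 1$ minimal surface, or is obtained by attaching a single handle to the boundary of a tubular neighborhood of a stable one--sided minimal surface $P$; being stable, $P$ is itself an index $\le 1$ minimal surface, so in the second case $S$ is carried by one of finitely many branched surfaces built from one carrying $P$ (by passing to the boundary of a tubular neighborhood and attaching a band). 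Hence it suffices to produce finitely many effectively constructible $\eta$--negatively curved branched surfaces carrying every closed embedded index $\le 1$ minimal surface contained in $K$.

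Next I would extract a uniform local model. By Theorem \ref{t:t1}, together with the standard curvature estimate for stable minimal surfaces --- applicable since index $\le 1$ forces all but at most one small ball of $\Sigma$ to be stable --- there is a scale $c=c(N)>0$, independent of the surface, such that for every closed embedded index $\le 1$ minimal surface $\Sigma\subset K$ and every $p\in\Sigma$, the component of $B_c(p)\cap\Sigma$ through $p$ is either a finite union of nearly parallel graphs of gradient at most one over some $2$--plane, or --- for $p$ in the at most one unstable neighborhood of $\Sigma$ --- a pair of such graphs joined by a small catenoidal neck whose two sheets remain graphical over each other out to the fixed scale $c$. I would then fix a triangulation $\mathcal T$ of a neighborhood of $K$ of mesh much smaller than $c$ and than the relevant curvature scales. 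Within each simplex, after discarding parallel copies (which a branched surface absorbs into its interval--bundle neighborhood), $\Sigma$ contributes nothing, or one graphical disk over one of finitely many allowed directions, or --- in at most one simplex --- a single fixed catenoidal branching model; hence there are only finitely many ``reduced combinatorial types'' of the intersection pattern of such a $\Sigma$ with $\mathcal T$. For each reduced type I would write down a branched surface in $N$ carrying all surfaces realizing that type, using pieces of the smooth minimal laminations provided by Theorem \ref{t:t2} as models for the sheets and then smoothing; the smoothing can be taken arbitrarily $C^2$--small, so since minimal surfaces in $N$ have curvature $\le -1<\eta$ the resulting sheets have curvature $<\eta$. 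The local structure theorem then guarantees that every closed embedded index $\le 1$ minimal surface realizes one of these finitely many types and is carried by the associated branched surface, and since $R$, $c$, $\mathcal T$, the enumeration of types, and the models are all effective, the $B_i$ are effectively constructible.

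The main obstacle is verifying that a surface of a given reduced combinatorial type genuinely lies in the interval--bundle neighborhood of its model branched surface transverse to the fibers: one must splice the local graphical pictures consistently across adjacent simplices and, above all, control the single neck simplex, where the second fundamental form of $\Sigma$ blows up along sequences while the limit lamination stays smooth with at most one isolated unstable leaf (Theorem \ref{t:t2}). What makes this work is exactly that Theorem \ref{t:t1} propagates the two--sheeted graphical structure from the neck out to the fixed scale $c$, so the neck is carried by one fixed two--sheeted model with a single branch curve rather than by something whose complexity grows with that of $\Sigma$. The remaining points --- that finitely many direction classes per simplex suffice because of the uniform gradient bound, that parallel sheets are swallowed by the fibered neighborhood, and that the smoothing respects the curvature bound --- are routine once the uniform scale $c$ is available; the only additional care needed is that every construction be carried out effectively, which rests on the effectivity of the constants in Theorems \ref{t:t1} and \ref{t:t3} (see \cite{CG}).
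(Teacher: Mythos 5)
Your proposal follows essentially the same path the paper sketches: reduce to a compact region via Theorem \ref{t:t3}, exploit the uniform local structure of Theorem \ref{t:t1} (graphical sheets away from at most one catenoidal neck, valid out to the fixed scale $c$), enumerate the resulting combinatorial types against a fine covering/triangulation to build the finitely many $\eta$-negatively curved branched surfaces, and invoke Theorem \ref{t:t2} for the lamination compactness and Theorem \ref{pittsrubinstein} for the Heegaard-surface application. The paper packages the construction as ``first handle $|A|^2\le C$, then the general index $\le 1$ case via Theorem \ref{t:t2} and a few additional arguments,'' which is precisely what your triangulation-based type enumeration together with the separate two-sheeted neck model realizes.
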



For the proof we show first that there are finitely many branched surfaces that carry all closed stable embedded surfaces or more generally that carry all closed embedded minimal surfaces with $|A|^2\leq C$ for some large but fixed $C$.  The general case makes use of this, together with Theorem \ref{t:t2}, and a few additional arguments.  See \cite{CG} for more details.  

\section{Mean Convex Foliations}

Any closed orientable $3$-manifold with a strongly irreducible Heegaard splitting and bumpy metric has a natural singular mean convex foliation, \cite{CG}. This section provides the precise   statement and an idea of the proof.  

A \emph{mean convex foliation} in a Riemannian $n$-manifold with boundary is a smooth codimension one foliation, possibly with singularities of standard type, such that each leaf is closed and mean convex.

In a $3$-manifold a foliation with singularities of \emph{standard type} means that almost all leaves are completely smooth (i.e., without any singularities). In particular, any connected subset of the singular set is completely contained in a leaf. Moreover, the entire singular set is contained in finitely many (compact) embedded $C^1$ curves with cylinder singularities together with a countable set of spherical singularities. In higher dimensions there are direct generalizations of this.  

\begin{Thm}  \label{t:t5}
Any closed orientable bumpy Riemannian $3$-manifold M with a strongly irreducible Heegaard splitting, supports a mean convex foliation.
\end{Thm}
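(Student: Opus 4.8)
The plan is to run a min-max argument with respect to the given strongly irreducible Heegaard splitting $(H_0,H_1)$ of $M$ and then use the index and multiplicity control to organize the resulting minimal surfaces into the leaves of a singular mean convex foliation. First I would fix the Heegaard surface $S$ and consider the sweepout $\{\Sigma_t\}_{t\in[0,1]}$ with $\Sigma_0,\Sigma_1$ the spines of $H_0,H_1$; the isoperimetric inequality gives $W>0$, so Theorem \ref{SS} applies and produces a minimal surface $\Gamma=\sum n_j\Gamma_j$. By Theorem \ref{pittsrubinstein} (the resolved Pitts--Rubinstein conjecture), since $S$ is strongly irreducible, $\Gamma$ is either an index $\le 1$ minimal surface isotopic to $S$, or (after one compression) the boundary of a tubular neighborhood of a stable one-sided Heegaard surface. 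By bumpiness, the unstable minimal surface $\Gamma$ is nondegenerate and isolated among minimal surfaces, and Proposition \ref{mult1} (multiplicity one) ensures that the positive-genus orientable pieces appear with multiplicity one, so the surface $\Gamma$ behaves as a genuine ``critical leaf'' of the area functional.

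Next I would build the foliation by flowing outward from $\Gamma$ on both sides. The surface $\Gamma$ has index $\le 1$; in the one unstable direction, pushing $\Gamma$ off by the lowest eigenfunction of the stability (Jacobi) operator $L$ strictly decreases area, and then one can run the (weak / level-set) mean curvature flow starting just off $\Gamma$. Because the metric is bumpy, the flow will not limit back onto $\Gamma$ and will sweep out a neighborhood, and on each side it will eventually either run into another minimal surface or collapse onto a spine of one of the handlebodies. Iterating Theorem \ref{pittsrubinstein}/Proposition \ref{stableboundary} on the complementary pieces with (strictly) stable minimal boundary, one gets a sequence of minimal surfaces; bumpiness together with the genus bound (Theorem \ref{genusbounds}) forbids infinitely many nested minimal surfaces of bounded genus, so the process terminates after finitely many steps. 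The mean curvature flow regions interpolate between consecutive minimal leaves and provide the smooth mean convex leaves, while the finitely many minimal surfaces and the two spines carry the singular set (cylinder singularities along the compression necks, spherical singularities at the spine cores), giving a foliation with singularities of standard type.

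The main obstacle is making the mean curvature flow ``by hand'' argument rigorous: the classical MCF can develop singularities, and one must flow past them while keeping the surfaces embedded and while controlling topology so that the leaves fit together into an honest foliation rather than merely a family of surfaces. This is precisely the content of the interpolation technology underlying Proposition \ref{main_deformation} and its generalizations — squeezing into coordinate balls, using the Lightbulb/Alexander-type tricks to open necks, and the strict-stability pushoff to guarantee strict area decrease near each stable leaf. A secondary technical point is handling the one-sided case (2) of Pitts--Rubinstein: there the would-be leaf is the boundary of an $I$-bundle, and one must check that the local model near the one-sided stable surface still produces standard-type (cylinder/sphere) singularities and that the two sides of the foliation glue correctly across it. Once these flow and gluing issues are settled, verifying that each resulting leaf is mean convex (mean curvature vector pointing consistently, by construction of MCF from the unstable side) and that the singular set lies in finitely many $C^1$ curves plus countably many points is a matter of bookkeeping using the local structure theorems \ref{t:t1} and \ref{t:t2}.
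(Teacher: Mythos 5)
Your proposal follows the paper's argument step for step: produce an index $\le 1$ min-max surface via Theorem \ref{pittsrubinstein}, use bumpiness and the lowest Jacobi eigenfunction to foliate a two-sided neighborhood by strictly mean convex surfaces, flow outward by mean curvature flow, and when the flow is blocked by a stable minimal surface apply Proposition \ref{stableboundary} to restart min-max on the remaining subdomain, iterating until termination. One small clarification: you attribute the foliation's flow portions to the ``by hand'' mean curvature flow of Proposition \ref{main_deformation}, whereas the paper uses the genuine weak (level-set) mean curvature flow of a mean convex initial surface, which is already known to persist past singularities with singular set of standard type; the ``by hand'' flow is a separate tool, used inside the proof of Proposition \ref{stableboundary} to force the subdomain's min-max surface off the strictly stable boundary.
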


Bumpy is known to be a generic property.  In particular, it is known that there is a Baire category set of such metrics.  Bumpy is just the assertion that there does not exist an infinitesimal one-parameter family of closed minimal surfaces.  Said more precisely, bumpy means that the second variational operator of any closed minimal surface has trivial kernel so all weakly unstable closed minimal surfaces must be (strictly) unstable.   We will see next that as a consequence, for a bumpy metric, there is a mean convex foliation of a small neighborhood of a closed embedded min-max surface.   This follows from the fact that a small neighborhood of any unstable minimal surface can always be foliated whether or not the metric is bumpy.  To see this note, that if $\Sigma\subset N$ is a closed embedded unstable minimal surface, then the lowest eigenvalue $\lambda$ of the second variation operator $L$ is negative.  If $\phi$ is an eigenfunction for $L$ with eigenvalue $\lambda$, then $|\phi| > 0$  and so after possibly replacing $\phi$ by $-\phi$ we may assume $\phi > 0$. By the second variation formula if $\Sigma_s = F(x,s)$ is a variation of $\Sigma =\Sigma_0$ with $F_s\perp \Sigma_s$, $F_s(\cdot, 0) = \phi\,\nn$ , then $\frac{d}{ds}_{s=0} H_{\Sigma_s} =-L\,\phi=\lambda\,\phi<0$. 
Here $H$ is the mean curvature scalar in the direction of the unit normal $\nn$. It follows from this that for $s > 0$ sufficiently small the hypersurface $\Sigma_s$  lies on one side of $\Sigma$ and is mean convex.  If fact, it follows that, the surfaces $\Sigma_s$ for $|s|$ small gives a mean convex foliation of a neighborhood of $\Sigma$ with $\Sigma$ as one of the leaves.  

To extend the foliated neighborhood of an unstable minimal surface to an larger region we flow the surface by the mean curvature flow.  Mean curvature flow is the negative gradient flow of area, so any surface in a $3$-manifold flows through surfaces in the direction of steepest descent for area.  When the initial surface is mean convex, then the movement is monotone;  it moves only in one direction and keeps moving in that direction.  Thus, as it evolves it foliates a region.  As the surface evolve singularities can occur and the surface is not anymore smooth everywhere.  However, the flow is known to make sense past such singularities and the singularities are of standard type.  The movement only stops when the surface either collapses to a point, a simple closed curve, or another lower dimensional set, or the surface flows toward a stable minimal surface.  

The proof of Theorem \ref{t:t5} follows from a more general statement that roughly goes as follows.  For a closed orientable $3$-manifold, divide first the manifold into two along a closed embedded minimal surface obtained as the min-max surface of a sweep-out (obtained in Theorem \ref{pittsrubinstein}).  Assuming that the metric on the $3$-manifold is bumpy we can, as described above, foliate a small neighborhood of  either side of the min-max surface by strictly mean convex surfaces nearly parallel to the min-max surface.  Flow such a strictly mean convex surface by the mean curvature flow to get a possibly singular foliation with closed leaves that are mean convex.  If the flow does not sweep-out the ``entire side'' of the min-max surface, then it gets held up at a stable minimal surface.  If it sweeps out the entire side, then we have the desired singular foliation of that side.  However, when it gets held up, then we can obtain a new min-max surface on the side of the $3$-manifold that is bounded by the stable minimal surface.  Using Proposition \ref{stableboundary} we then repeat the process and either foliate a side or get held up by yet another stable minimal surface.  This process must stop after finitely many iterations and when it does, we have constructed a possibly singular foliation but with closed mean convex leaves.

  \begin{figure}[htbp]
\centering\includegraphics[totalheight=.58\textheight, width=.70\textwidth]{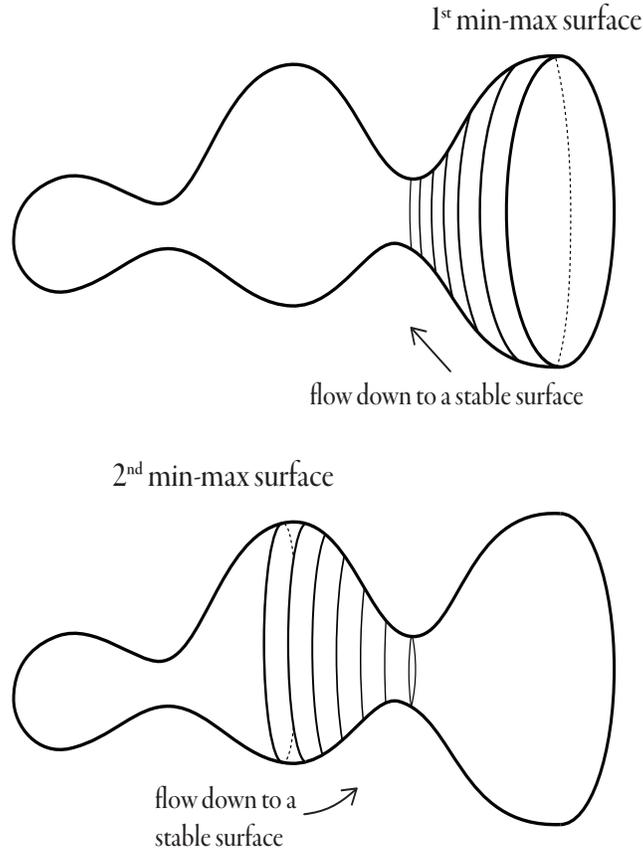}
\caption{A mean convex foliation of a $3$-manifold with three min-max surfaces.}   
  \end{figure}

\section{Polynomial vs Exponential Growth}  

By Theorems \ref{pitts} every strongly irreducible Heegaard surface or its 1-sided associate is isotopic to an index-$\le 1$ minimal surface.  In a hyperbolic 3-manifold such a surface has intrinsic sectional curvature $\le -1$.   Thus, such a Heegaard surface of genus-$g$, or its 1-sided associate could conceivably have an embedded disc with radius on the order of $\log(g)$.  This section shows that for a fixed hyperbolic 3-manifold $N$, there is an effectively computable uniform upper bound on the radius of an embedded disc on such a surface $H$.  In fact for fixed $L\in \BN$, there exists a uniform $r$ such that if $R\subset H$ is a connected surface containing a metric ball of radius $r$, then $\chi(R)\le -L$.  See Remarks \ref{pants} i).
\vskip 10pt
In what follows we make use of branched surfaces and laminations.  For precise definitions of related notions see \cite{O} and \S1 \cite{CG}.  Roughly speaking a \emph{measured lamination} is one for which one can associate a real number to arcs transverse to the lamination that are invariant under small perturbations through transverse arcs.  As an example, consider a foliation or lamination arising from a non singular 1-form.  Given a branched surface $B$, there is a natural way to take a regular neighborhood $N(B)$ whose boundary is a surface with corners.  Here $\partial N(B)=\partial_v N(B)\cup \partial_h N(B)$, where $\partial_v N(B)$ is a union of thin annuli parallel to the cusps of the branched surface and $\partial_h N(B)$ is a union of smooth surfaces with boundary locally parallel to the sheets of the branched surface.  A \emph{monogon} is a compact disc with a single cusp on its boundary.  It may arise as a properly embedded disc in the closed complement of a branched surface $B$. A monogon for $N(B)$ is a properly embedded disc in the closed complement of $N(B)$ which intersects $\partial N(B)$ in two arcs lying respectively in  $\partial_v N(B)$ and $\partial_h N(B)$.

\begin{definition}  Let $S$ be a Riemann manifold and $f:\BR\to \BR$.  We say that $S$ has growth at most $f$ if for each $x\in S$ and $r>0, \area(N_S(x,r))\le f(r)$.  If $f$ is a polynomial, then we say that $S$ has \emph{at most polynomial growth}, though usually just \emph{polynomial growth} for short.  If $e^{cr}<\area(N_S(x,r))$ for $r$ sufficiently large and $c>0$, then we say $S$ has \emph{exponential growth}.  \end{definition}   

\begin{remarks} \label{growth} i)  Since the area of a Euclidean (resp. hyperbolic) disc of radius $r$ is $\pi r^2$ (resp. $\pi sinh(r)^2$ ) their growth rates are respectively quadratic and exponential.  

	ii)  It can be deduced from \cite{CC1}, \cite{CC2} that there exists a foliation on $S\times I$, transverse to the $I$-fibers, where $S$ is the surface of genus-3, having leaves of exponential growth \emph{and} leaves of polynomial growth of all degrees.  Note that the closed surface of genus-3 carries the foliation. \end{remarks}
\vskip 10 pt

	On the other hand we have the following result, extending Plante's theorem \cite{Pl}, that leaves of measured foliations on closed manifolds have polynomial growth. 
	
\begin{theorem}  \label{plante} Let $B$ be a branched surface embedded in the Riemannian 3-manifold $M$.  There exists an effectively constructible polynomial $p(B)$, such that if $S$ is a leaf of a measured lamination carried by $B$, then the growth of $S$ is bounded by $p(B)$.\end{theorem}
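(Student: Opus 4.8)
The plan is to follow Plante's strategy \cite{Pl}, letting the branched surface supply the bookkeeping that makes the bound polynomial and effective. Fix a fibered regular neighborhood $N(B)$ of $B$ with its collapsing map $\pi\colon N(B)\to B$, and let $R_1,\dots,R_k$ be the sectors of $B$. A measured lamination $\mathcal L$ carried by $B$ is recorded by nonnegative weights $w_1,\dots,w_k$ on the sectors satisfying the branch equations of $B$, together with a holonomy-invariant transverse measure $\mu$ with $\supp\mu=\mathcal L\subset N(B)$ whose mass on a $\pi$-fiber over $R_i$ is $w_i$; after rescaling we may assume $\max_i w_i=1$, which changes neither the leaves of $\mathcal L$ nor their growth. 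Since $B$, hence $N(B)$, is compact, there are effectively computable constants (depending only on $B$ and the metric of $M$) bounding the intrinsic areas and diameters of the sectors, the number $D$ of branch arcs in the frontier of a sector, and the total mass $m(B)$ of $\mu$.

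First I would replace the area estimate by a combinatorial one. For a leaf $S$ and $x\in S$, pull the sector decomposition of $B$ back under $\pi|_S\colon S\to B$ to decompose $S$ into tiles $\{T_\alpha\}$, each carried homeomorphically onto a sector and so of intrinsic area and diameter bounded above and below by effective constants. A bounded-geometry comparison then gives $\area(N_S(x,r))\le C_1(B)\,N_\Gamma\!\left(v_0,\,C_1(B)\,r\right)$, where $\Gamma$ is the tile-adjacency graph --- vertices the tiles, edges joining tiles that share a branch arc --- $v_0$ is the tile containing $x$, and $N_\Gamma(v_0,n)$ counts the vertices of $\Gamma$ within combinatorial distance $n$ of $v_0$. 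Since $\Gamma$ has all degrees $\le D$, it suffices to bound $N_\Gamma(v_0,n)$ by an effectively constructible polynomial of degree depending only on $B$, uniformly over all leaves of all measured laminations carried by $B$ and all basepoints.

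Here the transverse measure enters, as in \cite{Pl}. The invariant measure $\mu$ permits averaging a fixed piece of a leaf together with its holonomy push-offs over the nearby leaves, and the finiteness of $m(B)$ then forces a F\o lner-type inequality for $\Gamma$ along a sequence of scales; morally, the weight vector $(w_i)$ solves the fixed homogeneous system of branch equations of $B$, so no sector is uniformly expanding and $S$ can grow only along the polynomially-growing directions of that (effectively computable) linear system. Upgrading this to an honest polynomial bound uses crucially that $S$ is assembled from only $k$ tile types glued by a fixed finite set of equations, so $S$ cannot acquire more than about $k$ independent directions of growth --- precisely the phenomenon that is absent in the examples of Remark \ref{growth}(ii). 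One thus obtains $N_\Gamma(v_0,n)\le C_2(B)\,n^{\deg(B)}$ with $\deg(B)\le k$, and substituting into the comparison of the previous paragraph produces the polynomial $p(B)$.

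I expect the genuinely delicate step to be this last upgrade. Plante's argument by itself yields only that a leaf in the support of an invariant transverse measure is F\o lner, hence of subexponential growth, which does not by itself rule out growth faster than every polynomial; one must exploit the finiteness and rigidity imposed by $B$ --- the $k$ tile types and the fixed branch equations --- to cap the degree and to keep every constant effective. The remaining ingredients --- the bounded-geometry tiling of $S$, the effective constants coming from compactness of $B$, and the elementary comparison of $\area(N_S(x,r))$ with vertex counts in $\Gamma$ --- are routine.
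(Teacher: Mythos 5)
The reduction to counting tiles in the adjacency graph $\Gamma$ and the bounded-geometry comparison $\area(N_S(x,r))\le C_1(B)\,N_\Gamma(v_0,C_1(B)\,r)$ are fine and are the natural first moves. The gap is exactly where you flag it, but the fix you propose does not close it. Two things go wrong. First, it is not accurate that Plante's argument ``by itself yields only that a leaf\dots is F\o lner, hence of subexponential growth'': the very theorem this one extends \emph{is} a polynomial growth theorem, and Plante obtains it not from an averaging/F\o lner estimate alone but by exploiting the holonomy cocycle that the invariant measure supplies --- in measure coordinates the holonomy along any path acts by translation, so one gets a period homomorphism whose image is a finitely generated subgroup of $\mathbb{R}$, and the polynomial degree is controlled by the rank of $H_1$ of the ambient object (here one should expect $H_1$ of $B$ or of $N(B)$). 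None of this appears in your sketch. Second, and more seriously, the step ``the weight vector solves a fixed homogeneous system, so $S$ cannot acquire more than about $k$ independent directions of growth, hence $N_\Gamma(v_0,n)\le C_2(B)\,n^{\deg(B)}$ with $\deg(B)\le k$'' is not an argument: a linear system does not have ``polynomially-growing directions,'' and ``independent directions of growth'' of a leaf is not a defined quantity. If bounded tile-type count plus fixed local gluing rules sufficed, every lamination carried by $B$ would have polynomial growth leaves, with no reference to the measure beyond F\o lner; but F\o lner (sub\-exponential) genuinely does not imply polynomial (there are groups, hence metric spaces of bounded geometry, of intermediate growth), and branched surfaces do carry unmeasured laminations with exponentially growing leaves, as Remark~\ref{growth}(ii) illustrates for foliations transverse to an $I$-bundle.

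What is actually needed, and what is missing from your proposal, is a quantitative version of Plante's cocycle argument adapted to the combinatorics of $B$: use the fact that the holonomy of a leaf in $\mu$-coordinates is a translation to define, for each tile $T$ of $S$ over a fixed sector, a transverse coordinate $c(T)\in[0,w_{i}]$; show that crossing a branch arc changes $c$ by one of finitely many ``shear'' constants determined by $B$ and the weight vector; observe that tiles over the same sector are separated in $c$ so that a combinatorial $n$-ball of tiles over a fixed sector injects into a controlled region of a finitely generated translation group; and finally bound the rank of that group effectively in terms of the combinatorics of $B$ (branch arcs / first Betti number), which is what yields an explicit polynomial degree. Your sketch replaces this chain by an appeal to the dimension of the solution cone of the branch equations, which is a different quantity and does not control leaf growth --- train tracks already show the dimension of the weight cone and the growth rate of leaves are unrelated. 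So the proposal as written has a genuine gap at its central step, and the proposed repair is not a valid substitute for the Plante-style period argument.
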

	
\begin{corollary} Let M be a closed hyperbolic 3-manifold.  There exists an effectively computable polynomial $p(t)$ such that any index-$\le 1$ minimal surface has growth bounded by $p(t)$.\end{corollary}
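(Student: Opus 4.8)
\emph{Sketch of the argument.} The plan is simply to chain Theorem~\ref{t:t4} into Theorem~\ref{plante}. Since $M$ is closed hyperbolic it is in particular a finite volume hyperbolic $3$-manifold, so fixing any $\eta>-1$, Theorem~\ref{t:t4} produces finitely many effectively constructible branched surfaces $B_1,\dots,B_n$ embedded in $M$ with the property that every index-$\le 1$ closed embedded minimal surface $\Sigma\subset M$ is carried by some $B_i$.

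Next I would observe that such a $\Sigma$, being carried by $B_i$, is (a finite union of leaves of) a \emph{measured} lamination carried by $B_i$. Realizing $\Sigma$ inside the fibered neighborhood $N(B_i)$ transverse to the $I$-fibers, the function assigning to a transverse arc the number of its intersection points with $\Sigma$ is invariant under small perturbations of the arc through transverse arcs (the endpoints being kept off $\Sigma$), hence is a transverse invariant measure; each component of $\Sigma$ is then a leaf of this measured lamination, carrying the metric induced from $M$. Theorem~\ref{plante} therefore applies and yields an effectively constructible polynomial $p(B_i)$ with $\area\bigl(N_\Sigma(x,r)\bigr)\le p(B_i)(r)$ for all $x\in\Sigma$ and all $r>0$.

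Finally I would set $p:=\sum_{i=1}^n p(B_i)$; each $p(B_i)$ is non-negative on $[0,\infty)$ (being an upper bound for an area), so $p$ dominates every $p(B_i)$ there, and since the list $B_1,\dots,B_n$ is effectively constructible (Theorem~\ref{t:t4}) and each $p(B_i)$ is effectively computable (Theorem~\ref{plante}), the polynomial $p$ is effectively computable. Every index-$\le 1$ minimal surface in $M$ is carried by some $B_i$ and hence has growth bounded by $p$.

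The only step that genuinely needs care is the bridge between the two theorems: one must check that ``carried by $B_i$'' is upgraded to ``a measured lamination carried by $B_i$'' in exactly the sense required by Theorem~\ref{plante}, and that the metric entering the growth estimate is the intrinsic metric of the minimal surface. This is where the embeddedness of the $B_i$ in $M$ is used, so that $p(B_i)$ records the geometry of the actual embedding; if instead one only knows $\Sigma$ is isotopic into $N(B_i)$, one additionally notes that such an isotopy has bi-Lipschitz distortion bounded in terms of $B_i$, so the growth bound survives at the cost of enlarging $p$.
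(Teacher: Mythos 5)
Your argument is exactly the paper's: invoke Theorem~\ref{t:t4} to obtain finitely many effectively constructible branched surfaces carrying every index-$\le 1$ minimal surface, then apply Theorem~\ref{plante} to each. The extra care you take over why a carried closed surface yields a measured lamination and over packaging the finitely many $p(B_i)$ into a single polynomial is sound but is just filling in what the paper leaves implicit.
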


\begin{proof}  By Theorem \ref{t:t4}  there is an effectively constructible set of branched surfaces that carry all such surfaces. Now apply Theorem \ref{plante}. \end{proof} 

These results together with Remarks \ref{growth} i) yield the following result  proved in \S 5 \cite{CG}.   By a \emph{regular splitting} of a branched surface we mean one that corresponds to splitting along the branch locus, as opposed to splitting to create a new complementary region. 

\begin{proposition}  \label{r splitting} Let $B$ be an $\eta$-negatively curved branched surface in the Riemannian 3-manifold M that fully carries a surface and let $r>0$.  Then there exist effectively constructible branched surfaces $B_1, \cdots, B_n$ obtained by regularly splitting $B$ such that every surface carried by $B$ is carried by some $B_i$ and each $B_i$ fully carries a surface.  Furthermore, if $E$ is  a subbranched surface of a regular splitting of some $B_i$, possibly $B_i$ itself,  then for each component $H$ of $\partial_h N(E)$ there exists $x\in H$ such that $N_H(x,r)\subset \inte(H)$.\end{proposition}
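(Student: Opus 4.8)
The plan is to separate the combinatorial assertion (the first sentence of the conclusion) from the geometric one (the ``furthermore''), with the geometric part resting entirely on the contrast between the polynomial growth supplied by Theorem \ref{plante} and the exponential growth of hyperbolic balls from Remarks \ref{growth} i). First I would dispatch the combinatorial statement. A regular splitting along the branch locus is a local move with only finitely many outcomes at each branch sector, so performing all regular splittings in parallel and iterating $M=M(B,r)$ times (with $M$ to be fixed below) produces a finite, effectively listable family of branched surfaces. If $S$ is carried by $B$, then at each branch $S$ lies to a definite side of the branch locus, and that side prescribes which regular splitting to follow; making these choices for $M$ steps exhibits $S$ as carried by one member of the family. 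Discarding the members that fail to fully carry a surface (a decidable condition, see \cite{O}) leaves $B_1,\dots,B_n$, and at least one survives because $B$ fully carries a surface, by standard branched-surface arguments. Everything here is effective once $M$ is pinned down.

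For the geometric statement the engine is the following growth estimate. By Theorem \ref{plante}, every surface carried by $B$ — hence every surface carried by any $B_i$, and, after a $C^2$-small smoothing, every component $H$ of $\partial_h N(E)$ for $E$ a subbranched surface of a regular splitting of a $B_i$ — has area growth bounded by the effectively constructible polynomial $p(B)$. On the other hand, since $B$ is $\eta$-negatively curved with $\eta<0$, every such smoothed surface has sectional curvature $<\eta<0$, so by area comparison of G\"unther type any \emph{embedded} intrinsic metric disc of radius $\rho$ in it has area at least $\frac{2\pi}{|\eta|}(\cosh(\sqrt{|\eta|}\,\rho)-1)$, which grows exponentially in $\rho$ (Remarks \ref{growth} i)). Setting the two bounds against each other yields an effectively computable $\rho_0=\rho_0(B)$ such that no surface carried by $B$ — and in particular no such $H$ — contains an embedded metric disc of radius exceeding $\rho_0$.

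With this in hand I would take $M(B,r)$ to be an effectively computable function of $r$, $\rho_0(B)$, and the minimal branch-sector width of $B$, and argue by contradiction: if after $M$ regular splittings some subbranched surface $E$ has a component $H$ of $\partial_h N(E)$ with $N_H(x,r)\not\subset\inte(H)$ for every $x\in H$, then $H$ is $r$-thin, i.e.\ every point of $H$ lies within intrinsic distance $r$ of $\partial H$, hence within a controlled distance of the cusp locus of $E$. Passing to a minimal sublamination carried by $E$ (equivalently, the lamination obtained as the inverse limit of a further infinite regular-splitting sequence) and combining recurrence in the compact ambient manifold with the curvature bound, one sees that following the splitting sequence forces the horizontal boundary to acquire a product region $P\times[0,w]$ with $w\ge 2r$ and with $P$ containing an intrinsic arc of length $\ge 2r$ after a number of steps bounded by the intrinsic ``return radius'' of a leaf; that return radius is at most $\rho_0(B)$, since a leaf of curvature $<\eta<0$ failing to return within intrinsic radius $\rho_0$ would carry an embedded disc of radius $>\rho_0$. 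The projection of such a product region to $\partial_h N(E)$ is the embedded metric $r$-ball we sought, contradicting the $r$-thinness of $H$. Hence $M(B,r)$ regular splittings suffice.

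The step I expect to be the main obstacle is the last one: converting ``$B$ is $\eta$-negatively curved and $p(B)$ is polynomial'' into a genuinely \emph{effective} bound on the number of regular splittings needed, which requires controlling precisely how one regular splitting alters the intrinsic geometry — in particular the thinness — of the horizontal boundary, and tracking that every constant remains computable from $B$, $\eta$, and $r$. The combinatorial bookkeeping and the two growth estimates themselves are routine given Theorem \ref{plante} and the curvature comparison.
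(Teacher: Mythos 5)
Your strategy is anchored on exactly the two inputs the survey points to — the polynomial growth of Theorem \ref{plante} and the exponential area forced by $\eta$-negative curvature from Remarks \ref{growth} i) — so the high-level route is the right one, and the derivation of the threshold $\rho_0$ beyond which no \emph{embedded} intrinsic disc can exist is correct and is indeed the engine the paper relies on (it is also exactly what Remark \ref{pants} i) extracts from the Proposition afterward). The combinatorial preamble about exhausting all regular splittings of bounded depth and discarding those that fail to fully carry a surface is also fine.

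The genuine gap is in the passage from ``bounded embedded-disc radius'' to the conclusion. You introduce a ``return radius'' for leaves, assert it is $\le \rho_0$, and then claim that recurrence within a bounded number of splits forces a product region $P\times[0,w]$ with $w\ge 2r$ whose projection is an embedded $r$-ball inside $H$; none of these steps is argued, and each hides a real difficulty. In particular, the projection of a product region in a further-split branched surface $E'$ need not land injectively, or at all, in $\partial_h N(E)$ for the original $E$, because splitting changes which sheets constitute the horizontal boundary; so the claimed contradiction with the $r$-thinness of $H$ is not established. More seriously, the Proposition requires the $r$-ball property to hold for every subbranched surface of \emph{every} regular splitting of $B_i$ (an unbounded family), not merely for those obtained within $M$ further steps — i.e.\ the property has to be stable under arbitrary additional splitting — and your contradiction scheme, phrased in terms of a fixed bound $M(B,r)$, does not address that stability. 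Closing the argument requires a precise account of how a single regular splitting changes the intrinsic geometry of $\partial_h N(\cdot)$ near the branch locus (roughly, that splitting can only widen, never thin, the horizontal boundary at a controlled rate), together with a quantitative version of the recurrence of measured leaves that ties the number of required splits to $\rho_0$ and the sector widths of $B$. That is the content of the argument in \S 5 of \cite{CG}, and it is exactly the part you flag as the ``main obstacle'' but leave unfilled.
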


\begin{remarks} \label{pants} i) The crucial consequence of this result is that by \ref{growth} i)  given $N>0$, there exists an effectively computable $r>0$ such that $\chi(H)<-N$ for each $H$ as in the Proposition.

ii)  It follows that we can assume that the branch surfaces under consideration are \emph{horizontally large}, i.e. no component of $\partial_hN(B)$ is a disc or an annulus.\end{remarks}

\section{On the Classification of Heegaard Splittings I: Finiteness}

The long-standing classification problem is to exhibit for each closed 3-manifold a complete list, without duplication, of all its Heegaard splittings, up to isotopy.  In this section and the next two we survey the classification problem for irreducible splittings in irreducible 3-manifolds as well as the authors' solution for non Haken closed hyperbolic 3-manifolds.  This problem for  reducible splittings as well as reducible manifolds is also interesting. See \S 10 for statements of some results in that direction.

\vskip 10pt
Let M be a closed Haken 3-manifold.  In 1990 Klaus Johannson, announced the following result, Theorem 4 \cite{Jo1}, stated here for closed manifolds, asserting proof in the 446 page book \cite{Jo2}.

\begin{theorem}  Let $M$ be a closed Haken 3-manifold.  Modulo twisting along essential tori, the set of all genus-$g$ Heegaard splittings of $M$ is finite and constructible.\end{theorem}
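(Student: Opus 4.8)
The plan is to reduce the problem, by a sequence of now-standard moves, to a finite enumeration inside the pieces of the JSJ (characteristic submanifold) decomposition of $M$. First I would dispose of the reducible (stabilized) splittings by induction on genus: a genus-$g$ stabilized splitting destabilizes, and since the stabilization of a given splitting is unique up to isotopy (the Reidemeister--Singer circle of ideas recalled in \S2), every genus-$g$ stabilized splitting is isotopic to the stabilization of one of the, inductively, finitely many splittings of genus $g-1$. So it suffices to enumerate \emph{irreducible} splittings of genus $\le g$ modulo twisting along essential tori. For an irreducible splitting $S$, invoke the Casson--Gordon dichotomy stated above: either $S$ is strongly irreducible, or it is weakly reducible. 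In the weakly reducible case I would untelescope $S$ à la Scharlemann--Thompson thin position \cite{ST1}, exhibiting $S$ as the amalgamation (in Schultens' sense) of a generalized Heegaard splitting whose thin levels form a system $F$ of closed incompressible surfaces and whose thick levels are strongly irreducible Heegaard surfaces of the components of $M\setminus F$. Refining $F$, one may arrange that $F$ contains the JSJ tori up to isotopy, so that each complementary piece is atoroidal, Seifert fibered, or an $I$-bundle. The problem then splits into three sub-problems, all to be carried out modulo twisting along the essential tori of $M$: (i) enumerate the systems $F$; (ii) enumerate, piece by piece, the strongly irreducible constituents of the generalized splitting; (iii) enumerate the amalgamation/gluing data.

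For (i) and (ii) I would fix a triangulation of $M$ (Moise \cite{Mo}), put incompressible surfaces in normal form and strongly irreducible Heegaard surfaces in almost normal form (Rubinstein, Stocking), and invoke Haken--Jaco--Oertel finiteness: there are finitely many fundamental normal surfaces, and every normal surface is a non-negative integral sum of them. The only obstruction to outright finiteness, up to isotopy, of normal (or almost normal) surfaces of genus $\le g$ is the possibility of adding a copy of a vertical torus of the characteristic submanifold before resolving a Haken sum. Inside an atoroidal piece every normal torus is boundary-parallel, and adding such a torus and resolving returns an isotopic surface once one discards boundary-parallel components and boundary-parallel tubes; hence in atoroidal pieces there are genuinely finitely many normal incompressible and almost normal strongly irreducible surfaces of genus $\le g$ up to isotopy, with no torus-twisting needed.

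For the Seifert-fibered and $I$-bundle pieces I would instead classify directly: incompressible surfaces are horizontal or vertical, and the strongly irreducible Heegaard splittings of such pieces are understood (Waldhausen and its descendants, Boileau--Otal, Moriah--Schultens, Scharlemann, \dots). Here an infinite family genuinely appears, but any two members differ by vertical Dehn twists along fibered tori---that is, precisely by twisting along essential tori---so finitely many remain after passing to the quotient. For (iii), the amalgamation across a torus $T\subset F$ is determined by the isotopy class of the attaching data along $T$, on which the mapping class group of $T$ acts through Dehn twists; modulo twisting along the tori of $F$ only finitely many choices survive, and the constraint $\operatorname{genus}(S)\le g$ bounds all the genera occurring in the pieces, leaving only finitely many inputs at step (ii). Constructibility is then inherited at every stage: the triangulation, the JSJ decomposition (the Jaco--Shalen--Johannson algorithm), the enumeration of fundamental normal and almost normal surfaces of bounded genus, the untelescoping, and the classification of Heegaard splittings of the Seifert pieces are all algorithmic.

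The hard part---and the reason the complete argument fills the book \cite{Jo2}---is step (ii) inside the characteristic submanifold: proving that twisting along essential tori is genuinely the \emph{only} source of infinitude, i.e. controlling exactly how the ``add a vertical torus'' operation interacts with the (almost) normal Heegaard structure, and pinning down the Heegaard splittings of Seifert-fibered and graph-manifold pieces tightly enough that the residual family is finite modulo torus twists. A secondary obstacle is making the untelescoping--amalgamation correspondence effective, and verifying that two splittings of $M$ that yield the same generalized splitting and the same gluing data (up to twists) are in fact isotopic, so that the enumeration neither misses nor multiply counts isotopy classes.
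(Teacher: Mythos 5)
Be aware that the paper does not prove this theorem; it is stated as a citation of Johannson \cite{Jo1}, \cite{Jo2}, so there is no ``paper's own proof'' to compare against. That said, your sketch is a coherent outline of the \emph{modern} route to such a result, in the style of Scharlemann--Thompson untelescoping, Rubinstein--Stocking almost normal surfaces, Jaco--Oertel/Haken normal surface finiteness, and the classification of splittings of Seifert pieces (Moriah--Schultens) --- essentially the circle of ideas Tao Li \cite{Li1}, \cite{Li2}, \cite{Li3} later made work for the non-Haken case and for fixed-genus enumeration. Johannson's original argument takes a genuinely different route: he does not use thin position or almost normal surfaces (neither was available in usable form at the time of his 1990 announcement), but instead develops his own combinatorial theory of ``Heegaard graphs'' and a normal form for Heegaard surfaces relative to hierarchies and handle structures of the Haken manifold, with finiteness established through handle-slide combinatorics rather than through normal-surface fundamental solutions. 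What Johannson's approach buys is self-containment within classical Haken-hierarchy machinery; what yours buys is a conceptually cleaner reduction to a small number of well-separated and now-standard technical lemmas, plus compatibility with the algorithmic/effective viewpoint emphasized throughout this paper. You are right to flag step (ii) --- showing that torus twisting is the \emph{only} source of infinitude and pinning down the Seifert and graph-manifold pieces --- as the real content; your outline as written also leaves open that the untelescoping is not unique (different thin positions of one splitting can yield different generalized splittings and different systems $F$, and one must argue separately that a chosen system of thin levels can be isotoped to include the JSJ tori), and that two distinct amalgamation data can produce isotopic splittings, so the enumeration may overcount; these are the ``secondary obstacle'' you mention and are nontrivial to close.
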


In particular, if $M$ is also hyperbolic, there are only finitely many genus-$g$ Heegaard surfaces and they are constructible.  For Haken manifolds, this resolved a corrected form of Waldhausen's \cite{Wa1} conjecture: that a closed 3-manifold supports only finitely many Heegaard surfaces of a fixed genus.   

Now assume that $M$ is a closed non-Haken  3-manifold.  We have the following result, the last piece being done in \cite{CG}.

\begin{theorem}\label{complete} If $M$ is a closed non-Haken 3-manifold, then there exists an effective algorithm to produce a finite list of Heegaard surfaces which contains all the irreducible Heegaard splittings up to isotopy.\end{theorem}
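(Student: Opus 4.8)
The plan is to combine the Pitts--Rubinstein theorem with the branched-surface finiteness of §4--§5 and a Tao-Li-style combinatorial argument, and to make every step effective using the growth estimates of §7.

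\emph{Reductions.} Since $M$ is non-Haken it is in particular irreducible, so the Casson--Gordon theorem above applies: an irreducible splitting that were weakly reducible would force an embedded incompressible surface in $M$, which is excluded. Hence \emph{every} irreducible Heegaard splitting of $M$ is strongly irreducible, and it suffices to produce an effectively constructible finite list containing all strongly irreducible Heegaard surfaces up to isotopy. By geometrization and the known (effective) classification of Heegaard splittings of spherical space forms and small Seifert fibered spaces, we may further assume $M$ is hyperbolic. Equip $M$ with a bumpy metric of sectional curvature $<\eta$ for some $\eta$ slightly larger than $-1$; such a metric exists by a small perturbation of the hyperbolic metric (bumpiness is generic by White).

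\emph{From surfaces to branched surfaces.} Let $H$ be a strongly irreducible Heegaard surface. By Theorem \ref{pittsrubinstein}, $H$ is isotopic either to a closed embedded minimal surface of index $\le 1$, or, after a single compression, to $\partial N(\Sigma)$ for $\Sigma$ a stable one-sided Heegaard surface; in the latter case $\Sigma$ is a closed embedded minimal surface with a uniform bound on $|A|^2$. In the first case Theorem \ref{t:t4} exhibits one of finitely many effectively constructible $\eta$-negatively curved branched surfaces carrying $H$; in the second case $\Sigma$ is carried by one of the finitely many branched surfaces that carry all minimal surfaces with bounded $|A|^2$ (produced inside the proof of Theorem \ref{t:t4}), and $H$ is recovered from such a $\Sigma$ by adding a single tube, which enlarges the list by an effectively computable amount (at the cost of relaxing $\eta$ harmlessly). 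Thus we obtain finitely many effectively constructible $\eta$-negatively curved branched surfaces $B_1,\dots,B_n$ such that every strongly irreducible Heegaard surface of $M$ is carried by some $B_j$.

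\emph{Finiteness for a single branched surface.} It remains to show that a fixed $\eta$-negatively curved branched surface carries only finitely many, effectively listable, strongly irreducible Heegaard surfaces up to isotopy. Apply Proposition \ref{r splitting}: regularly split each $B_j$ and replace it by the resulting branched surfaces, which by Remarks \ref{pants} we may take horizontally large, and for which — via Theorem \ref{plante} and the polynomial-versus-exponential dichotomy — every component of $\partial_h N(\cdot)$ has $|\chi|$ exceeding any prescribed bound. Fix such a $B$. If $B$ carried infinitely many pairwise non-isotopic strongly irreducible Heegaard surfaces $H_k$, then normalizing their transverse weight vectors and passing to a limit would produce a nonzero measured lamination fully carried by a splitting of $B$; horizontal largeness together with $\eta$-negative curvature excludes the degenerate sphere/torus/annulus leaves and forces this lamination to be essential, contradicting non-Hakenness of $M$. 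Hence the transverse weights of any Heegaard surface carried by $B$ are bounded by an effectively computable constant, so $B$ carries only finitely many isotopy classes of embedded surfaces with such weights; among these we effectively select the Heegaard ones, and we test two candidates for isotopy using the mean convex foliation of Theorem \ref{t:t5} through the minimal representative of a candidate. Collecting over the finitely many $B_j$'s yields the desired finite effective list.

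\emph{Main obstacle.} The hard part is the last step: proving that a horizontally-large $\eta$-negatively curved branched surface in a non-Haken manifold carries no essential measured lamination, and, quantitatively, that the transverse weights of carried strongly irreducible Heegaard surfaces are bounded by an \emph{effectively computable} constant. This is exactly where the negative-curvature growth estimates of §7 (Theorem \ref{plante}, Proposition \ref{r splitting}, Remarks \ref{pants}), the local structure theory of index-$\le 1$ minimal surfaces (Theorems \ref{t:t1}--\ref{t:t3}), and the non-Haken hypothesis must be combined; mere finiteness à la Tao Li does not supply the bounds, and it is the passage from finiteness to an algorithm that constitutes the real work.
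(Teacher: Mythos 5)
Your plan matches the paper's architecture up through step 4 (Casson--Gordon reduction, geometrization, Theorem \ref{t:t4}, and Proposition \ref{r splitting} with Remark \ref{pants}), but the crucial final step diverges and leaves a genuine gap, which you yourself flag.

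The gap: your finiteness argument for a fixed horizontally-large branched surface $B$ is a compactness argument --- normalize the weight vectors of an infinite sequence of carried Heegaard surfaces, extract a limiting measured lamination, and derive a contradiction with non-Hakenness. This is precisely Tao Li's route and it is intrinsically non-effective: it tells you a bound on weights \emph{exists} but gives no recipe for computing it. You then write ``Hence the transverse weights \ldots are bounded by an effectively computable constant,'' which is a non sequitur --- nothing in the compactness argument makes the constant computable. The paper's proof of Theorem \ref{main1} avoids the limit entirely and replaces it with a direct combinatorial criterion: writing $S=n_1F_1+\cdots+n_qF_q$ in fundamental solutions, one shows that if some $n_j$ exceeds an explicitly computable $N_i$ then $S$ is weakly reducible. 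The mechanism is concrete: sort the coefficients, locate a sub-branched surface $B'$ fully carried by the ``heavy'' part $S_2$, observe that $\partial_h N(B')$ must be compressible (else $N$ is Haken), and then extend the two monogons of a compressing disc --- via strips that run through the interstitial bundle of $S$ and project into the $\pi_1$-injective pair of pants $P'$ supplied by Remark \ref{pants} --- to a pair of disjoint compressing discs on opposite sides of $S$. This monogon-to-weak-reduction step is exactly what turns Li's finiteness into an effective bound, and it is absent from your proposal.

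A secondary point: you try to ``test two candidates for isotopy using the mean convex foliation of Theorem \ref{t:t5}.'' This is both unnecessary for Theorem \ref{complete}, which explicitly allows the output list to contain duplicates and reducibles (see Remark i) following it), and not achievable at this level of the argument --- deciding isotopy between two strongly irreducible surfaces is the content of Theorem \ref{main} and requires the Thick Isotopy Lemmas of \S 9, a two-parameter min-max argument, and the graph $\mG$ of $\delta$-incompressible surfaces. Theorem \ref{t:t5} by itself does not give an isotopy test.
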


\begin{remarks} i)  Note that this list may contain reducible splittings and duplicates, i.e. pairwise isotopic splittings.  

ii) The result for Seifert fibered spaces follows from the 1998 paper of Moriah and Schultens \cite{MS}.

iii) In 2006 Tao Li \cite{Li1}, \cite{Li2} proved that  $M$ has only finitely many isotopically distinct irreducible Heegaard splittings, thereby establishing a strong form of  Waldhausen's conjecture. In 2011 he showed that given $g>0$, there exists an effective algorithm to produce a finite list, possibly with duplication, of all the Heegaard surfaces of genus-$g$, up to isotopy.  

iv)  Since  by Perelman's geometrization theorem a closed non-Haken 3-manifold is either hyperbolic or Seifert fibered  it remained to effectively bound the genus of non Haken irreducible splittings of hyperbolic 3-manifolds.  This is done by the first two authors in the 2018 paper \cite{CG}.  \end{remarks}

\begin{theorem}  \label{main1} If $N$ is a closed non-Haken hyperbolic 3-manifold, then there exists an effectively computable $G(N)$ such that any irreducible Heegaard splitting of $N$ has genus bounded above by $G(N)$.  \end{theorem}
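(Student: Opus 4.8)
The plan is to reduce an irreducible Heegaard surface to a closed embedded index $\le 1$ minimal surface, carry it by one of the finitely many effectively constructible branched surfaces of Theorem \ref{t:t4}, and read off an effective genus bound from the way the handlebodies of the splitting meet a horizontally large, negatively curved branched surface. First I would reduce to the strongly irreducible case: a closed hyperbolic $3$-manifold is irreducible, and non-Hakenness means it contains no two-sided incompressible surface, so if an irreducible Heegaard splitting of $N$ were weakly reducible the theorem of Casson--Gordon \cite{CaGo1} would exhibit an incompressible surface, a contradiction; hence every irreducible splitting $S=\partial Y_0=\partial Y_1$ of $N$ is strongly irreducible. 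By the now-established Pitts--Rubinstein Theorem \ref{pittsrubinstein}, either (a) $S$ is isotopic to a closed embedded index $\le 1$ minimal surface, or (b) a single compression of $S$ is isotopic to $\partial\cN(K)$ for a stable minimal one-sided Heegaard surface $K$; since a compression changes $\chi$ by $2$ and $\partial\cN(K)\to K$ is a double cover, in case (b) $|\chi(S)|=2|\chi(K)|+2$, so in both cases it suffices to bound $|\chi(\Sigma)|$ for a closed embedded index $\le 1$ minimal surface $\Sigma$ whose complement in $N$ is a union of handlebodies ($\Sigma=S$ in case (a); $\Sigma=K$ in case (b)). By Theorem \ref{t:t4} such a $\Sigma$ is carried by one of finitely many effectively constructible $\eta$-negatively curved branched surfaces $B_1,\dots,B_n$, and by Proposition \ref{r splitting} with Remarks \ref{pants} we may, after regular splitting, take the carrying branched surface $B$ to be horizontally large and to have the horizontal pieces of $\cN(B)$ and of its subbranched surfaces of Euler characteristic below any prescribed effectively chosen bound (Remarks \ref{growth} i) together with the upper curvature bound converts the embedded metric $r$-balls furnished by Proposition \ref{r splitting} into arbitrarily negative Euler characteristics).

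The crux is to bound the genus of a strongly irreducible Heegaard surface carried by such a branched surface $B$. Morally: if the genus is large then $\Sigma$ traverses some horizontal piece $F$, with $\chi(F)$ very negative relative to the genus, in a long stack $P_1,\dots,P_m$ of parallel sheets; transversality to the $I$-fibres forces the co-orientations of the $P_k$ to alternate, so the complementary product slabs $F\times(k,k+1)$ lie alternately in $Y_0$ and in $Y_1$. On $P_1$, whose fundamental group is free of rank $1-\chi(F)>\genus(Y_0)$, the inclusion $\pi_1(P_1)\to\pi_1(Y_0)$ has nontrivial kernel, so the loop theorem produces a compressing disk for $Y_0$ meeting $\Sigma$ only in $P_1$; since $N$ is irreducible and the sheets may be taken incompressible in $\Sigma$, its boundary can be made essential on $\Sigma$. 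The same construction on a slab of $Y_1$ bounded by $P_3$, which is disjoint from $P_1$, yields disjoint essential compressing disks for $Y_0$ and $Y_1$ --- a weak reduction, contradicting strong irreducibility (and in case (b) the analogous one-sided statement fails). This is an effective incarnation of Tao Li's theorem \cite{Li1},\cite{Li2} that a branched surface carries only finitely many strongly irreducible Heegaard surfaces; equivalently one could take the limit of the projectivized weight vectors, obtaining a measured lamination carried by $B$, hence of polynomial growth by Theorem \ref{plante} and --- by horizontal largeness --- with no sphere, disk, or annulus leaves, so that an essential sublamination, and hence (via a compact leaf) an incompressible surface, would force $N$ Haken.

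The main obstacle is the effectivity in this last step: ``finitely many'' must be replaced by an \emph{explicit} bound, which means running Tao Li's argument quantitatively --- bounding the combinatorial complexity of the $B_i$, the sizes of the regular splittings of Proposition \ref{r splitting}, and the threshold past which the weak reduction (or the essential-lamination limit) must appear --- and threading the apparent circularity that $|\chi(F)|$ be forced past the genus, which one resolves by positing a hypothetical genus bound, choosing the splitting parameter accordingly, and deriving a contradiction. This is exactly where the geometric input of \S\S 5--7 enters: the branched surfaces are effectively constructible and $\eta$-negatively curved, and the contrast between the polynomial growth of the laminations they carry and the exponential growth of hyperbolic discs (Theorem \ref{plante}, Remarks \ref{growth} i)) lets one arrange, effectively, both the horizontal largeness of the $B_i$ and the very negative Euler characteristics of their horizontal pieces that the argument consumes. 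Taking the maximum over the finite list $B_1,\dots,B_n$, and adding the bounded loss from the compression in case (b), then yields the effectively computable $G(N)$.
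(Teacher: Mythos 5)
Your overall skeleton --- Casson--Gordon to reduce to strongly irreducible splittings, Pitts--Rubinstein (Theorem \ref{pittsrubinstein}) to replace $S$ by an index $\le 1$ minimal surface or its one-sided associate, Theorem \ref{t:t4} to capture these by finitely many effectively constructible $\eta$-negatively curved branched surfaces, and Proposition \ref{r splitting} with Remarks \ref{pants} and \ref{growth} to make the horizontal boundary pieces have very negative Euler characteristic --- is exactly the paper's. The gap is in your crux. You claim that since $\pi_1(P_1)$ is free of rank $1-\chi(F)>\genus(Y_0)$, the inclusion $\pi_1(P_1)\to\pi_1(Y_0)$ ``has nontrivial kernel'' and the loop theorem then furnishes a compressing disc for $Y_0$ with boundary in $P_1$. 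That inference is false: free groups of arbitrarily large (even countably infinite) rank embed in the free group of rank two, so a rank inequality says nothing about injectivity, and in fact handlebodies contain embedded $\pi_1$-injective boundary subsurfaces with $|\chi|$ well above $g$. Nothing in the setup forces $P_1$ itself to compress into $Y_0$, and the alternation of slabs does not by itself manufacture the two disjoint essential discs. Your parenthetical treatment of case (b), ``the analogous one-sided statement fails,'' is likewise unsubstantiated.

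The paper's actual mechanism is different. It writes the carried surface as $S=n_1F_1+\cdots+n_qF_q$ in terms of the fundamental solutions of the normal surface equations for the branched surface $B_i$, normalizes $n_1=1$ (else one finds a weak reduction directly), chooses the threshold $N_i$ so large that the coefficient vector has a big jump, and passes to the subbranched surface $B$ fully carrying the high-multiplicity tail $S_2=n_jF_j+\cdots+n_qF_q$. Non-Hakenness enters not through a rank count but to force $\partial_hN(B)$ to be compressible; a compressing disc is arranged to meet $B_i$ in a single arc cutting it into two monogons, and those monogons are extended, via strips of uniformly bounded penetration in the interstitial bundle projecting to immersed arcs in a $\pi_1$-injective pair of pants $P'\subset B$, to disjoint essential compressing discs on opposite sides of $S$. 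Your final paragraph correctly identifies effectivity as the hard point and correctly names the geometric inputs (polynomial versus exponential growth, horizontal largeness) that make the splitting of $B_i$ effective, but the weak-reduction step has to be this fundamental-solution/monogon construction from \S 7 of \cite{CG} rather than the $\pi_1$-rank heuristic, which as stated does not prove anything.
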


\begin{remarks}  \label{effective} i)  The paper \cite {CG} also effectively finds, for each $g$, a finite list of genus-$g$ splittings which contains the irreducible splittings.  It makes essential use of the negative curvature as well as deep results from minimal surface theory.  Tao Li \cite{Li2} on the other hand only starts with a triangulation. With bare hands, armed only with normal surface theory, he proves his result. 

ii) An \emph{effectively computable algorithm}, is one that produces an output within some function of the input data.  In Theorem \ref{main1}  the input  is a triangulation by hyperbolic simplices, each of which has uniformly bounded dihedral angles and edge lengths.  Such a triangulation exists by \cite{Br}.   

iii) An effective algorithm starting from a combinatorial triangulation would follow from \S 7 \cite{CG} and a positive solution to Conjecture \ref{branched surface}.\end{remarks}
\noindent\emph{Idea of the proof of Theorem \ref{main1}}.  To start with \cite{CaGo1} implies that the irreducible splittings are also strongly irreducible.  By \cite{KLS} a strongly irreducible Heegaard surface $S$ is isotopic to either an index-$\le 1$ minimal surface or by attaching an unknotted tube between the sheets of the double cover of a non orientable index-0 surface.  By Theorem \ref{t:t4} these index-$\le 1$ surfaces are carried by finitely many $\eta$-negatively curved branched surfaces $B_1, \cdots, B_n$.  By Theorem \ref{r splitting} and Remark \ref{pants} we can assume that each sub branched surface $B$ of each $B_i$ has the property that  $\partial_h(N(B_i))$ contains a $\pi_1$-injective pair of pants.  Let $F_1, \cdots, F_q$ represent the fundamental solutions to the normal surface equations of $B_i$. The goal is to find an $N_i\in \BN$ such that if $S=n_1F_1+\cdots+n_qF_q$, some $n_j\ge N_i$ and  $S$ is a Heegaard surface then $S$ is weakly reducible. Suppose that $0<n_1\le n_2\le \cdots\le n_q$.   It turns out that we can then assume $n_1=1$ or we readily find a weak reduction.  Also we can assume that for some very large $N_i$, if $j$ is the smallest value with $n_j\ge N_i$, then $n_{j-1}/n_j$ is very small.  Let $S_2 = n_j F_j+ \cdots n_q F_q$. If $B$ is the subbranched surface that fully carries $S_2$, then either $B$ is incompressible and hence $N$ is Haken or $\partial_h N(B)$ is compressible.  Since $N$ is non Haken, the former case does not occur.  Let $D$ denote a compressing disc.  We can assume that $P\subset \partial_h N(B)$ is an essential pair of pants where one component of $\partial P = \partial D$.  For simplicity assume that it projects to an embedded surface $P'$ in $B$.  It follows that $S_2$ contains at least $N_i$ parallel copies of $P$ that project to $P'$.   In the most interesting situation, a compressing disc $D$ for $\partial_h N(B)$ intersects $B_i$ in a single arc $\alpha$ which decomposes $D$ into two monogons.  Also $S$ passes through the sector $\sigma$ of $B_i$ which contains $\alpha$ exactly once.     These monogons then extend to isotopically disjoint compressing discs for $S$, one on each side of $S$.  Each such disc consists of two copies of a monogon and a strip that lies in the interstitial bundle of $S$ and which penetrates only a uniformly bounded amount. The projection of this strip to $B_i$ is a properly immersed arc in $P'$.  \vskip8pt

\begin{remark} The ideas of using branched and normal surface theory are already in  \cite{Li1} and \cite{Li2} as is the idea of extending monogons to find disjoint compressing discs.  The hyperbolicity enabled us to effectively find finitely many $\eta$-negatively curved branched surfaces which in combination with Theorem \ref{plante} enabled us to split a controlled amount to find ones with large horizontal boundaries, e.g. the $P\subset \partial_h N(B)$.  A detailed understanding of $S$ near $N(P')$ enabled us to effectively find the desired strips that built the weakly compressing discs.  \end{remark}

\section{On the Classification of Heegaard Splittings II: \newline The Thick Isotopy Lemma}

The goal of this section is to outline a proof of the following result.

\begin{theorem} \label{main} Let $N$ be a closed non-Haken hyperbolic 3-manifold. There exists an effectively constructible set $S_0, S_1, \cdots, S_n$ such that if $S$ is an irreducible Heegaard splitting, then $S$ is isotopic to exactly one $S_i$.\end{theorem}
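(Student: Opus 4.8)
The plan is to combine the finiteness result (Theorem \ref{complete}) with an effective procedure for detecting reducibility and duplication among the finitely many candidate surfaces produced there. Theorem \ref{complete} gives an effectively constructible finite list $T_1, \dots, T_m$ of Heegaard surfaces containing, up to isotopy, every irreducible Heegaard splitting of $N$. Since $N$ is non-Haken, Casson--Gordon (the theorem of \cite{CaGo1} quoted above) tells us that every irreducible splitting is in fact strongly irreducible, so by Theorem \ref{pittsrubinstein} each genuinely irreducible $T_i$ is isotopic either to an index-$\le 1$ minimal surface or to the boundary of a tubular neighborhood of a one-sided Heegaard surface after a single compression. The first task is to prune the list: discard any $T_i$ that is stabilized (hence reducible). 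For this one needs an effective test for whether a given Heegaard surface destabilizes; this can be run because, having a triangulation (a hyperbolic one by \cite{Br}), the surfaces $T_i$ can be isotoped to normal or almost-normal form, and destabilization corresponds to a detectable combinatorial move. After pruning we have an effectively constructible sublist consisting of strongly irreducible (equivalently, in this setting, irreducible) Heegaard surfaces that still contains every irreducible splitting up to isotopy.

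The second and harder task is to remove duplicates, i.e. to decide, for each pair $T_i, T_j$ remaining, whether they are isotopic in $N$. Here I would use the geometric input: by \cite{KLS} each surviving $T_i$ is isotopic to a canonical geometric representative --- an index-$\le 1$ minimal surface $\Sigma_i$, or a once-compressed boundary of a one-sided surface. If the metric is taken bumpy (a generic condition, and one can perturb the given hyperbolic metric slightly, or argue that hyperbolic metrics already have enough rigidity), then index-$\le 1$ minimal surfaces are isolated, so there are only finitely many of them; two splittings are isotopic iff they have the same minimal representative, which is a finite check once the minimal surfaces are enumerated. To make this effective, one uses Theorem \ref{t:t3} (index-$1$ minimal surfaces lie in a fixed ball $B_R(x)$, with $R$ effectively computable) and Theorem \ref{t:t4} (the index-$\le 1$ minimal surfaces, hence the relevant Heegaard surfaces, are carried by finitely many effectively constructible $\eta$-negatively curved branched surfaces $B_1, \dots, B_n$). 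One then needs an effective bound on the complexity (area, or normal-surface weight with respect to the triangulation) of these minimal representatives; the growth bound Theorem \ref{plante} and Proposition \ref{r splitting} control the geometry of surfaces carried by the $B_k$, and combined with the diameter bound from Theorem \ref{t:t3} this should give an a priori effective area bound, hence finitely many candidate minimal surfaces that can be listed and pairwise compared.

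The technical heart --- and the main obstacle --- is the \emph{thick isotopy lemma} promised by the section title: one must show that if two Heegaard surfaces $T_i$ and $T_j$ are isotopic, then they are connected by an isotopy that stays ``thick'', i.e. that can be taken through surfaces of controlled geometry (bounded area, staying in a bounded region, with controlled normal-surface complexity), so that the existence of the isotopy becomes an effectively checkable condition rather than a search over an a priori unbounded family of isotopies. The strategy I would pursue is to run the min-max / mean-convex-foliation machinery of \S6: using the mean convex foliation of $N$ coming from Theorem \ref{t:t5}, sweep $T_i$ to its minimal representative $\Sigma_i$ and $T_j$ to $\Sigma_j$ along the foliation, keeping area controlled by the foliation's leaves (which are mean convex, hence area-monotone under the flow). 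If $\Sigma_i = \Sigma_j$ up to isotopy, then concatenating the two controlled isotopies gives the thick isotopy between $T_i$ and $T_j$; conversely, distinct minimal representatives certify non-isotopy. The difficulty is that the mean curvature flow used to build the foliation develops singularities, and one must control the isotopy type and area across those singularities --- precisely the kind of analysis underlying Proposition \ref{main_deformation} and the mean convex foliation theorem --- and one must ensure the whole procedure (enumerate branched surfaces, enumerate carried minimal surfaces below the area bound, run the foliation flow, check coincidence) is effective at every stage. Once the thick isotopy lemma is in place, the theorem follows: set $\{S_0, \dots, S_n\}$ to be one representative from each isotopy class surviving the pruning-and-deduplication, and every irreducible splitting is isotopic to exactly one of them.
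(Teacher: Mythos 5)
Your high-level outline (start with the finite list from Theorem \ref{complete}, weed out reducibles and duplicates, and locate the technical heart in a ``thick isotopy lemma'') correctly matches the shape of the paper's argument, and you even guess the name of the key lemma. But two of the steps you propose for the weeding-out would not go through as stated, and the third is a genuinely different (and flawed) route to the thick isotopy lemma than the one the paper takes.

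First, for de-duplication you propose assigning to each surviving $T_i$ ``a canonical geometric representative --- an index-$\le 1$ minimal surface $\Sigma_i$'' and then asserting that ``two splittings are isotopic iff they have the same minimal representative,'' so that ``distinct minimal representatives certify non-isotopy.'' This is the fatal gap: \cite{KLS} produces \emph{some} index-$\le 1$ minimal surface isotopic to a given strongly irreducible Heegaard surface, but makes no uniqueness claim, and in general a single isotopy class of Heegaard surfaces can contain more than one index-$\le 1$ minimal surface. Bumpiness makes the minimal surfaces isolated and finite in number, but it does not make the representative unique within an isotopy class. So you have simply replaced the problem ``decide whether two Heegaard surfaces are isotopic'' by ``decide whether two minimal surfaces are isotopic,'' without a new tool. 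The paper sidesteps this entirely by never choosing a canonical representative: it builds a finite graph $\mG$ whose vertices form a net in the family of $\delta$-incompressible surfaces of bounded area, and proves (Thick Isotopy Lemma I, Lemma \ref{thin1}) that isotopic $8\delta$-incompressible Heegaard surfaces are joined by an isotopy through $\delta$-incompressible surfaces of effectively bounded area, hence by a path in $\mG$; isotopy then becomes connectivity in a finite graph, with no uniqueness needed. And the engine behind Lemma \ref{thin1} is not the mean convex foliation of \S6 as you suggest, but a $2$-parameter sweepout argument: extend the two Heegaard foliations $\mF_0,\mF_1$ to a $2$-parameter sweepout over $I\times I$, use min-max (Lemma \ref{bounds}) to replace it by one with all areas $<\max\{C_1+1,4\pi(g-1)+1\}$, and then show that if no $\delta$-incompressible path exists in the square then the original surface was already weakly reducible, contradicting non-Hakenness via Casson--Gordon. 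Sweeping along a mean convex foliation does not by itself control incompressibility along the way, nor does it resolve the representative-uniqueness issue.

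Second, for pruning reducibles you propose to ``discard any $T_i$ that is stabilized'' on the grounds that, in normal or almost-normal form, ``destabilization corresponds to a detectable combinatorial move.'' This is hand-waved: recognizing whether a given Heegaard surface destabilizes is not known to be an elementary combinatorial check, and the paper does not do this. Instead, reducibility is detected by the same graph machinery via Thick Isotopy Lemma II (Lemma \ref{thin2}): a weakly reducible $8\delta$-incompressible surface admits a controlled isotopy (again built by a $2$-parameter sweepout, now ending in a Heegaard foliation of a stabilization) to a $4\delta$-bicompressible surface through $\delta$-incompressible surfaces of bounded area; and $\eta$-bicompressibility with $2\eta <$ injectivity radius forces weak reducibility. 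So reducibility becomes: is $T_i$ in the same component of $\mG$ as some $\eta$-bicompressible vertex? This is the genuinely effective test, whereas a direct destabilization search has no a priori bound on where to look.

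In short: your proposal correctly identifies what must be proved, but the two decision procedures you supply (uniqueness of minimal representatives; combinatorial destabilization detection) are not available, and your route to the thick isotopy lemma via mean convex foliations is not the $2$-parameter sweepout that actually carries the proof.
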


By Theorem \ref{complete} we can effectively construct a set of Heegaard surfaces containing all the irreducible ones.  To prove Theorem \ref{main} we need to effectively weed out duplications as well as reducible splittings from this set.    

Let us first focus on the elimination of duplicate irreducible splittings. The main point is that when two strongly irreducible surfaces are isotopic in a hyperbolic manifold, we can apply the following \emph{Thick isotopy Lemma I} to find a path connecting them with controlled geometry in the sense that the areas of surfaces in the isotopy are bounded from above by a computable amount and the surfaces never get \emph{too thin} to either side.  To formalize the notion of \emph{thinness}, let us say a surface $\Sigma$ embedded in a three-manifold $N$ is $\delta$-incompressible if there exists no essential simple closed curve of diameter (in $N$) at most $\delta$ that bounds a disk in a complementary region. 
A finite net in the family of such surfaces is effectively constructible and gives rise to a graph $\mG$ whose vertices are elements of the net and where two vertices are connected by an edge if they correspond to \emph{close} surfaces.  
An isotopy with controlled geometry gives rise to a path in this graph. Thus, two strongly irreducible splittings are isotopic if and only if they lie in the same component of the graph.   See \cite{CGK} \S3 for details.  For technical reasons the proof is conducted in the PL category.  It uses the notions of \emph{crudely normal surface} and \emph{pinched isotopy} which may be of independent interest.

\begin{lemma}[Thick Isotopy Lemma I \cite{CGK}]\label{thin1}
Let $N$ be a hyperbolic non-Haken three-manifold of injectivity radius $\delta_0$.  Suppose $\Sigma_0$ and $\Sigma_1$ are $8\delta$-incompressible isotopic Heegaard surfaces where $\delta<\delta_0/16$.  Then there exists an isotopy $\Sigma_t$ joining $\Sigma_0$ to $\Sigma_1$ so that each surface $\Sigma_t$ is $\delta$-incompressible and the area of all intermediate surfaces $\Sigma_t$ is bounded by a computable constant $C$. 
\end{lemma}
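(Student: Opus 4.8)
My plan is to pull the isotopy tight by one‑parameter min‑max in order to control area, and then to repair thinness by a localized surgery exploiting the slack between $8\delta$ and $\delta$ together with the hypothesis $16\delta<\delta_0$.

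\emph{Setup.} Fix a geometric triangulation of $N$ with uniformly bounded simplices (Remarks \ref{effective}). We may assume $\Sigma_0,\Sigma_1$ are irreducible, hence strongly irreducible by \cite{CaGo1}, so that $g_0:=\genus(\Sigma_0)\le G(N)$ by Theorem \ref{main1}; and, since the surfaces that occur are drawn from the effectively constructed net underlying the graph $\mG$, we may assume $\area(\Sigma_i)\le C_0$ for a computable $C_0$ (the preliminary isotopy needed to arrange this, through $4\delta$‑incompressible surfaces of bounded area, being part of the crudely normal / branched surface machinery). As $\Sigma_0$ and $\Sigma_1$ are isotopic, the family $\Lambda_0$ of isotopies $\{\Sigma_t\}_{t\in[0,1]}$ from $\Sigma_0$ to $\Sigma_1$ with the endpoints held fixed is nonempty; let $\Lambda$ be its saturation under ambient isotopies that are the identity at $t=0,1$, and put $W=\inf_{\{\Sigma_t\}\in\Lambda}\sup_t\area(\Sigma_t)$.

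\emph{Bounding the width.} If $W\le\max(\area\Sigma_0,\area\Sigma_1)\le C_0$, then by definition of the infimum there is an isotopy with $\sup_t\area(\Sigma_t)\le C_0+1$. Otherwise the family is non‑trivial in the sense of \eqref{isnontrivial}, so Theorem \ref{SS} produces a nonempty minimal limit $\sum_j n_j\Gamma_j$ with $W=\sum_j n_j\area(\Gamma_j)$. Every slice of every sweepout has genus $g_0$, so Theorem \ref{genusbounds} bounds $\sum_{j\in O}n_j\,g(\Gamma_j)+\tfrac{1}{2}\sum_{j\in N}n_j(g(\Gamma_j)-1)$ by $g_0$, while the Gauss equation in the hyperbolic metric yields $\area(\Gamma_j)\le-2\pi\chi(\Gamma_j)$ for each $j$ (so no $\Gamma_j$ is a sphere); combining, $W\le 8\pi g_0\le 8\pi\,G(N)$. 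In either case we obtain an isotopy $\{\Sigma_t\}_{t\in[0,1]}$ from $\Sigma_0$ to $\Sigma_1$ with $\area(\Sigma_t)\le C:=\max(C_0+1,8\pi G(N))$, and $C$ is computable.

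\emph{Making the isotopy $\delta$‑incompressible, and the main obstacle.} The set $T\subset[0,1]$ of times at which $\Sigma_t$ carries an essential simple closed curve of diameter $\le\delta$ bounding a disc in a complementary handlebody is closed and, since $\Sigma_0$ and $\Sigma_1$ are $8\delta$‑incompressible, is contained in the interior of $[0,1]$. Over a component of $T$ fix such a curve $\gamma$; because $\diam_N(\gamma)\le\delta<\delta_0$ it lies in an embedded metric ball and so is null‑homotopic in $N$, whence (as $N$ is irreducible) $\gamma$ bounds an embedded disc of diameter $O(\delta)$ which together with its compressing disc cobounds a ball. The plan is then to work with crudely normal slices of bounded weight and with pinched isotopies as in \cite{CGK}: replace the offending stretch of $\{\Sigma_t\}$ by a compactly supported modification that pinches the thin neck off and reconstitutes it only once it can be taken of diameter $>\delta$; use strong irreducibility (every essential compression is into the same handlebody) and the unknottedness of small necks in a ball of radius $<\delta_0$ to straighten this pinched isotopy to a genuine isotopy, isotopic rel endpoints to the original stretch, by a Lightbulb‑type argument as in the proof of Proposition \ref{main_deformation}; and observe that, being localized in a ball of radius $O(\delta)$, it changes area by $O(\delta^2)$. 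Boundedly many such repairs, their number controlled by $g_0\le G(N)$, then yield the desired $\delta$‑incompressible isotopy with $\area(\Sigma_t)\le C':=C+1$. I expect this last step to be the crux: the delicate point is to control the area and the topological complexity of the necks that appear \emph{simultaneously}, and to certify that the repaired path is an honest isotopy rel endpoints rather than merely a pinched one — exactly what the notions of \emph{crudely normal surface} and \emph{pinched isotopy} are built to manage. By comparison the area bound via min‑max is soft.
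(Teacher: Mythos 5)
Your area bound is broadly in the right spirit, but the proof has two genuine problems: the min-max setup doesn't fit the stated framework, and — more seriously — the mechanism you propose for enforcing $\delta$-incompressibility is the wrong one, missing what is really the content of the lemma.

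On the area bound: your family $\Lambda_0$ of isotopies with fixed endpoints $\Sigma_0,\Sigma_1$ is not a sweepout in the sense of Theorem~\ref{SS}. That theorem requires the boundary slices to degenerate (1-complexes or empty), whereas yours are genus-$g_0$ Heegaard surfaces, so the dichotomy ``$W\le C_0$ or Theorem~\ref{SS} produces a minimal limit'' is not justified as stated. The paper circumvents this by building a \emph{two}-parameter sweepout $\Sigma_{s,t}$, $(s,t)\in X=I\times I$: the $t$-direction runs through two explicit Heegaard foliations $\mF_0$, $\mF_1$ (constructed e.g. by Haken's algorithm, giving a computable area bound $C_1$ on the boundary slices $\Sigma_{0,t}$, $\Sigma_{1,t}$), while $\Sigma_{t,0}$, $\Sigma_{t,1}$ are 1-complexes. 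This family genuinely satisfies the hypotheses of the multi-parameter min-max theorem, and Lemma~\ref{bounds} then gives the bound $C=\max\{C_1+1,4\pi(g-1)+1\}$.

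On $\delta$-incompressibility — the crux, as you correctly flag: the paper's argument is essentially two-dimensional, and a one-parameter family has no substitute for it. After min-max, one fixes the pulled-tight sweepout over $X=I^2$ and seeks a \emph{path} $\alpha$ in $X$ from $(0,1/2)$ to $(1,1/2)$ avoiding short compressions. Define $G\subset X$ to be the parameters $(r,s)$ with a $\le\delta$-short compression to the $H_0$ side and $R$ the analogous set for $H_1$. The key observations are: $G$ meets neither $0\times[1/2,1]$ nor $1\times[1/2,1]$, $R$ meets neither $0\times[0,1/2]$ nor $1\times[0,1/2]$ (by the $8\delta$-incompressibility of the endpoint foliations near their middle leaves), and $G\cap R=\emptyset$ — because $(r,s)\in G\cap R$ would make $\Sigma_{r,s}$ $1.5\delta$-bicompressible, hence weakly reducible, hence (as $N$ is non-Haken) reducible, contradicting the strong irreducibility of the splitting. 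A separation/Alexander-duality argument in the square then produces the desired path. Your proposal instead starts with a single path, finds a bad interval, and tries to ``pinch the thin neck off and reconstitute it.'' This does not work: pinching along a compressing disc is a compression, not an isotopy — it changes the genus and destroys the Heegaard structure — and there is no reason the ``reconstituted'' surface returns to the same isotopy class, nor that the reconstitution can be performed through $\delta$-incompressible surfaces. The small embedded disc of diameter $O(\delta)$ you produce in a metric ball is irrelevant here: the compressing disc lies deep in a handlebody and can be enormous, and the short curve may well be bicompressible at some parameter, which is precisely the obstruction the 2-parameter argument is designed to rule out. The number of ``repairs'' is also not controlled by $g_0$. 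In short, you need the extra parameter direction (coming from the Heegaard foliations) to give yourself room to detour around the bad sets, and you need the non-Haken hypothesis via the bicompressibility-implies-reducible contradiction; local surgery plus a Lightbulb trick is not a substitute.
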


To start with, given the Heegaard surfaces $\Sigma_i$, $i=0,1$, we can explicitly construct, e.g. by Haken's algorithm \cite{Ha2}, a Heegaard foliation $\mF_i$ with $\Sigma_i$ as a leaf and hence compute an upper bound $C_1$ for the area of any surface in either $\mF_0$ or $\mF_1$.  Since $\Sigma_0, \Sigma_1$ are isotopic Heegaard surfaces, there exists an extension to an isotopy of $\mF_0$ to $\mF_1$.  Thus, there exists \emph{some} 2-parameter sweepout $\Sigma_{s,t}$ parametrized by $X=I\times I$ where $\Sigma_{i,t}, t\in I$ corresponds to $\mF_i$  for $i=0,1$ and each $\Sigma_{t,0}$, $\Sigma_{t,1}$ is a 1-complex.  

The areas of the surfaces in the sweepout $\Sigma_{s,t}$ \emph{a priori} may be arbitrarily large.  Using min-max theory we claim that there exists a sweepout parametrized by X with the same boundary values as $\Sigma_{s,t}$ such that:
\vskip 10pt
(*) for each $s,t$, $\mbox{area}(\Sigma_{s,t})< C=\max\{C_1+1, 4\pi (g-1)+1\}$, where $g=\genus(\Sigma_i)$.
\vskip 10pt


To prove (*), we will also need the following fact (which follows from the Gauss equation and Gauss-Bonnet formula):
\begin{lemma}[Area bounds for minimal surfaces]\label{hyperbolicareabound}
Let $\Gamma$ be a genus $g$ minimal surface immersed in a hyperbolic three-manifold.  Then 
\begin{equation}\label{areabounds}
\mbox{area}(\Gamma)<4\pi(g-1).   
\end{equation}
\end{lemma}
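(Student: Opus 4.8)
\emph{Proof proposal.} The plan is to combine the Gauss equation for a minimal immersion with the Gauss--Bonnet theorem on the abstract (genus $g$) domain surface. First I would recall that for a surface $\Gamma$ isometrically immersed in a Riemannian $3$-manifold $N$, the Gauss equation reads $K_\Gamma = K_N(T\Gamma) + \det A$, where $K_\Gamma$ is the intrinsic Gauss curvature of $\Gamma$, $K_N(T\Gamma)$ is the ambient sectional curvature of the tangent $2$-plane, and $A$ is the second fundamental form. Since $\Gamma$ is minimal its principal curvatures satisfy $\kappa_1+\kappa_2=0$, so $\det A=\kappa_1\kappa_2=-\tfrac12|A|^2\le 0$. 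When $N$ is hyperbolic, $K_N\equiv -1$, hence pointwise $K_\Gamma=-1-\tfrac12|A|^2\le -1$.

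Next I would integrate this pointwise bound over $\Gamma$ and invoke Gauss--Bonnet. For a closed orientable surface of genus $g$ one has $\int_\Gamma K_\Gamma\,dA = 2\pi\chi(\Gamma)=4\pi(1-g)$, while $\int_\Gamma K_\Gamma\,dA\le \int_\Gamma(-1)\,dA=-\area(\Gamma)$. Combining the two gives $\area(\Gamma)\le 4\pi(g-1)$. If $\Gamma$ is only immersed (with self-intersections), the same computation goes through verbatim on the abstract genus-$g$ domain with area counted with multiplicity: the Gauss identity and Gauss--Bonnet are intrinsic/extrinsic identities that are valid after pulling back the metric under the immersion, and the image area is no larger.

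For the strict inequality, equality would force $\int_\Gamma(K_\Gamma+1)\,dA = 0$ with nonpositive integrand $-\tfrac12|A|^2$, hence $A\equiv 0$, i.e. $\Gamma$ totally geodesic. This is the only place where a little care is needed: for the surfaces to which this lemma is applied in the min-max arguments above---Heegaard and min-max surfaces in the non-Haken manifolds under consideration---this case is excluded (a Heegaard surface is compressible, hence not totally geodesic, and in any case a non-Haken $N$ carries no closed incompressible surface), so the inequality is strict. I do not expect any serious obstacle here; the only subtleties are pinning down the sign conventions so that $\det A\le 0$ on a minimal surface, handling the immersed case on the domain rather than on the image, and ruling out the totally geodesic equality case in the intended setting.
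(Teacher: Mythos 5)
Your proof is correct and follows exactly the route the paper indicates: the paper states the lemma ``follows from the Gauss equation and Gauss-Bonnet formula,'' and that Gauss-equation/Gauss--Bonnet computation is precisely what you carried out. Your extra remark about the equality (totally geodesic) case is a reasonable clarification, since as literally stated the strict inequality would fail for a closed totally geodesic surface; in the non-Haken setting where the lemma is applied this cannot occur, as you note.
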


The genus bounds together with Lemma \ref{hyperbolicareabound} imply:
\begin{lemma}\label{bounds}
The width of a non-trivial $k$-parameter family of genus $g$ surfaces in a hyperbolic three-manifold is at most $4\pi(g-1)$.  
\end{lemma}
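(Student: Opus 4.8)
The plan is to run the Simon-Smith min-max theorem on the given family, insert the resulting minimal surfaces into the area estimate forced by hyperbolicity, and then use the genus bounds to control the total.

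First I would apply Theorem \ref{SS}. Since the family is non-trivial we have $W>\sup_{t\in\partial X}\area(\Sigma_t)$, so some min-max sequence $\Sigma^i_{t_i}$ converges as varifolds to $\sum_{j} n_j\Gamma_j$ with the $\Gamma_j$ smooth, embedded, pairwise disjoint minimal surfaces, $n_j\in\BN$, and
\begin{equation}
W=\sum_{j} n_j\,\area(\Gamma_j).
\end{equation}
(The number of parameters $k$ is irrelevant here; it enters only the Morse index bound of Proposition \ref{indexbounds}.) Next I would use the area estimate behind Lemma \ref{hyperbolicareabound} in its Euler-characteristic form: for a closed minimal surface $\Gamma$ in a hyperbolic $3$-manifold the Gauss equation gives intrinsic curvature $K_\Gamma=-1-\kappa^2\le -1$, where $\pm\kappa$ are the principal curvatures, so Gauss-Bonnet yields $\area(\Gamma)\le -2\pi\chi(\Gamma)$ whether or not $\Gamma$ is orientable; in particular $\chi(\Gamma)<0$, so no component can be a sphere, torus, projective plane, or Klein bottle. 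Applying this termwise,
\begin{equation}
W\le -2\pi\sum_{j} n_j\,\chi(\Gamma_j).
\end{equation}

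The crux is then to show $\sum_j n_j\chi(\Gamma_j)\ge 2-2g$, and for this I would invoke the genus bounds of Theorem \ref{genusbounds} in their geometric formulation: after finitely many compressions the min-max surface $\Sigma^i_{t_i}$ becomes a surface $\widehat\Sigma$ isotopic to $n_j$ parallel copies about each $\Gamma_j$. An orientable $\Gamma_j$ contributes $n_j\chi(\Gamma_j)$ to $\chi(\widehat\Sigma)$; a one-sided $\Gamma_j$ has a twisted $I$-bundle neighborhood whose boundary is the connected orientable double cover $\widetilde\Gamma_j$, the embedded orientable parallel copies inside it are copies of $\widetilde\Gamma_j$, so $n_j$ is forced to be even and the contribution is $\tfrac{n_j}{2}\,\chi(\widetilde\Gamma_j)=n_j\chi(\Gamma_j)$ as well. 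Hence $\chi(\widehat\Sigma)=\sum_j n_j\chi(\Gamma_j)$. Since each compression raises Euler characteristic by exactly $2$ and $\Sigma^i_{t_i}$ is a connected genus-$g$ surface,
\begin{equation}
\sum_j n_j\chi(\Gamma_j)=\chi(\widehat\Sigma)\ge\chi(\Sigma^i_{t_i})=2-2g.
\end{equation}
Combining the last two displays gives $W\le -2\pi(2-2g)=4\pi(g-1)$, as required. Equivalently, one can substitute the area bound of Lemma \ref{hyperbolicareabound} directly into $W=\sum_j n_j\area(\Gamma_j)$ and compare term by term with \eqref{gb}; it reduces to the same estimate.

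The only step that needs care is the Euler-characteristic bookkeeping for one-sided limit components: one must use that Theorem \ref{genusbounds} produces a surface genuinely \emph{isotopic} to the model $\bigsqcup_j(n_j\text{ parallel copies about }\Gamma_j)$, so that Euler characteristics match on the nose, and that ``parallel copies about a one-sided surface'' must be read as parallel copies of its orientable double cover, which is what makes the multiplicity even and keeps $\chi$ additive with the correct coefficients. The rest — the Gauss-equation area bound and the additivity of $\chi$ under compression — is routine.
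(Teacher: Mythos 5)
Your proof is correct and follows the same basic route as the paper: apply Theorem \ref{SS} to get $W=\sum n_j\,\area(\Gamma_j)$, feed in the hyperbolic area bound of Lemma \ref{hyperbolicareabound}, and close the estimate with the genus bounds of Theorem \ref{genusbounds}. The one substantive difference is that the paper's proof explicitly treats only the case where all $\Gamma_i$ are orientable (``assume for simplicity''), whereas your Euler-characteristic reformulation carries the bookkeeping through the one-sided components as well, using the observation that the parallel copies about a one-sided $\Gamma_j$ are copies of its orientable double cover (so $n_j$ is even and $\tfrac{n_j}{2}\chi(\widetilde\Gamma_j)=n_j\chi(\Gamma_j)$); this makes your version slightly more complete than what is written in the paper, while buying the same final inequality.
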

\emph{Proof:}
By the min-max theorem, we obtain the existence of a collection of minimal surfaces $\Gamma_1,..., \Gamma_k$ as well as positive integers $n_1...,n_k$ so that $W=\sum n_i\mbox{area}(\Gamma_i)$.  Let us assume for simplicity the $\Gamma_i$ are all orientable.   Then we have:
\begin{equation}
W=\sum n_i \mbox{area}(\Gamma_i) <  4\pi \sum n_i (\mbox{genus}(\Gamma_i)-1) \leq-4\pi+4\pi \sum n_i (\mbox{genus}(\Gamma_i)) \leq   4\pi  (g-1).    \end{equation}
The first inequality is from Lemma \ref{hyperbolicareabound} and the last inequality follows from the Genus Bounds \eqref{genusbounds}. 
\qed
\\
\\
Roughly speaking, Lemma \ref{bounds} asserts that no matter how complicated topologically a genus $g$ sweep-out might be in a hyperbolic manifold, by pulling the entire family as tightly as possible, the areas of the surfaces can be controlled just in terms of the genus $g$.   
\\
\\
\emph{Proof of (*):}

We consider the saturation $\Lambda$ of the sweepout $\Sigma_{s,t}$ and the corresponding min-max value $W_\Lambda$.  There are two cases: either $W_\Lambda > C_1$ or else $W_\Lambda \leq  C_1$.  In the second case, from the definition of width we have sweepouts with areas satisfying (*).  In the first case, the family is non-trivial and Lemma \ref{bounds} together with the definition of width then gives (*).\qed



\vskip 10pt
We now continue the proof of Lemma \ref{thin1}.  Suppose the sweepout obtained from (*) has been parametrized so that for $i=0,1, \Sigma_i = \Sigma_{i,1/2}$.  Thus, any properly embedded path $\alpha(t)$ in $X$ from $(0,1/2)$ to $(1,1/2)$ gives rise to an isotopy defined by $\Sigma_t=\Sigma_{\alpha(t)}$.  By construction $\area(\Sigma_t)<C$.  To complete the proof of Lemma \ref{thin1} we need to show that there exists a path $\alpha(t)$ so that each $\Sigma_t$ is $\delta$-incompressible. Suppressing the details, the idea of the proof is contained in the following figure.  The figure shows that the failure of finding the desired path implies that $\Sigma_0$ is weakly reducible and hence reducible \cite{CaGo1}, since $N$ is non-Haken.  (In reality, \cite{CGK} the green region $G$ is compact with piecewise smooth boundary and contains all the parameters $(r,s)$ such that $\Sigma_{r,s}$ has a $\le\delta$ short compression to the $H_0$ side and if $(r,s)\in G$, then $\Sigma_{r,s}$ has some $\le 1.5\delta$ compression to the $H_0$ side.   Also $(0\times [1/2,1]\cup 1\times [1/2,1])\cap G=\emptyset$.  Analogous statements hold for the red region $R$, with $H_0$ replaced by $H_1$ and $(0\times [0,1/2]\cup 1\times [0,1/2])\cap R=\emptyset$. If $(r,s)\in G\cap R$, then $\Sigma_{r,s}$ is $1.5\delta$-bicompressible and hence the corresponding Heegaard splitting is weakly reducible as we discuss below.)

\begin{figure}[ht]
\includegraphics[scale=0.60]{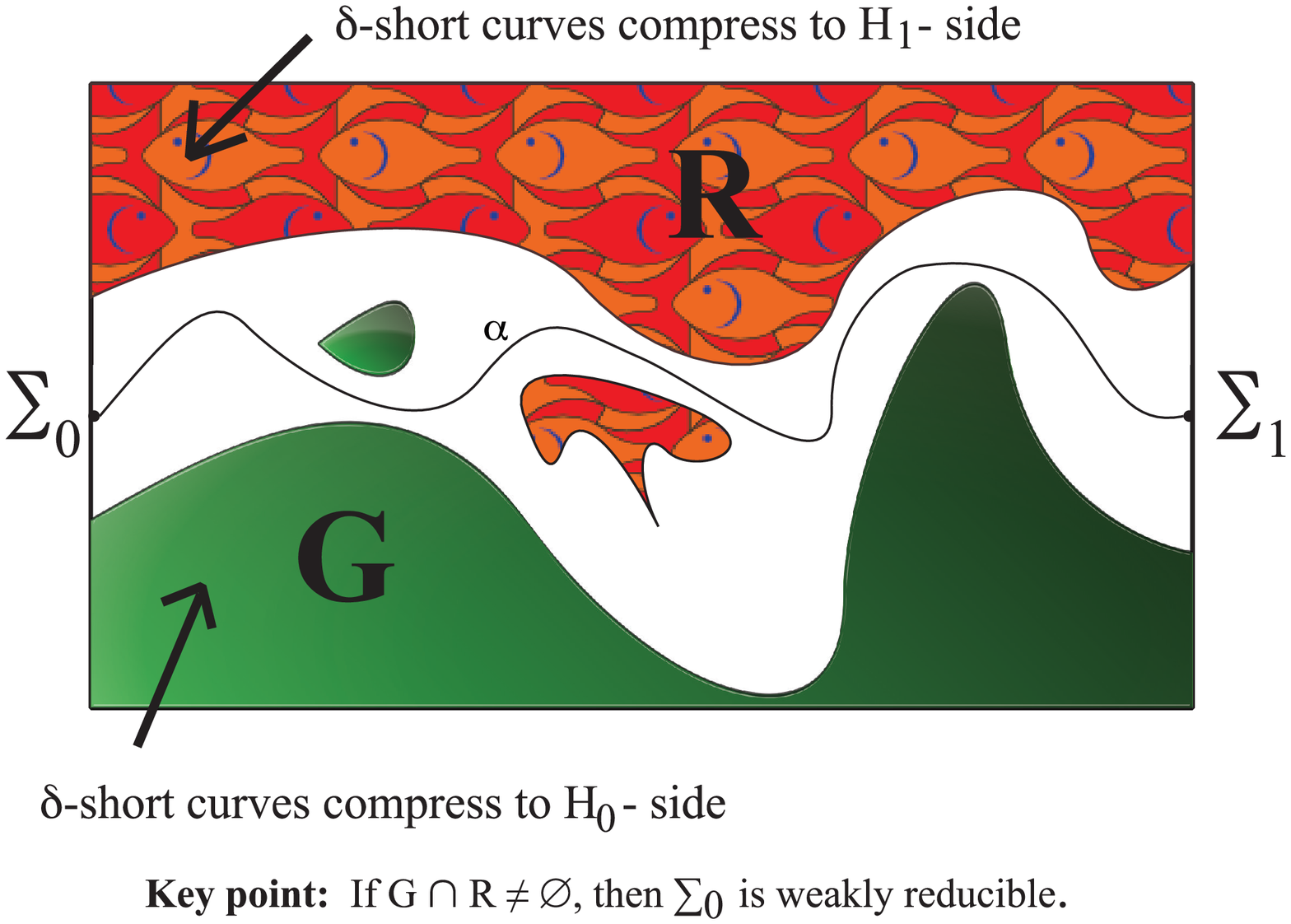}
\end{figure}

Let us now focus on the elimination of reducible splittings. Here we introduce the notion of $\Sigma$ being $\eta$-bicompressible. This means that there exist essential simple closed curves of diameter $\le \eta$ which respectively compress to distinct sides of $\Sigma$. A simple fact is that if $2\eta <$  the injectivity radius of $N$, and N is not the 3-sphere, then an $\eta$-bicompressible Heegaard surface is weakly reducible.  The key geometric result needed is Lemma \ref{thin2}, an analogue of Lemma \ref{thin1} which roughly says that if $\Sigma$ is weakly reducible, then for $\delta$ sufficiently small, it is isotopic to an $4\delta$-bicompressible surface through $\delta$-incompressible surfaces whose areas are uniformly bounded above.  Using the same graph $\mG$ we see that $\Sigma$ is reducible if and only if it is in the same component as an $\eta$-bicompressible one.  See \cite{CGK} \S3 for details.  In both cases, i.e. the weeding out of duplications and reducibles, the path in $\mG$ corresponds to a pinched crudely normal isotopy with respect to a triangulation $\Delta_3$ such that the weights of the interpolating surfaces remain uniformly bounded above.  That we can uniformly bound the weights follows from the area bound in Lemmas \ref{thin1}, \ref{thin2}.  The $\delta$-incompressibility condition allows us to work in the category of crudely normal and crudely almost normal surfaces.  The number of such surfaces of uniformly bounded weight is finite, hence the finiteness of $\mG$.  By construction $\Delta_3$ is a subcomplex of a much coarser $\Delta_2$.  It is at the level of $\Delta_2$ that bicompressibility is effectively detected.  Very heuristically, from the eyes of $\Delta_3$, a path in $\mG$ looks like an isotopy of an incompressible surface, while the vision of $\Delta_2$ is sufficiently broad to detect weak reducibility.

\begin{lemma}[Thick Isotopy Lemma II \cite{CGK}]\label{thin2}  Let $N$ be a closed non-Haken hyperbolic 3-manifold with injectivity radius $\delta_0$.   If $\Sigma_0$ is weakly reducible and $8\delta$-locally incompressible, then there exists an effectively computable $C$ and an isotopy $\Sigma_t$ from $\Sigma_0$ to a $\Sigma_{t_1}$ such that for each $t\le t_1$, $\area(\Sigma_t)<C$ and $\Sigma_t$ is $\delta$-incompressible.  Finally $\Sigma_{t_1}$ is $4\delta$-bicompressible.\end{lemma}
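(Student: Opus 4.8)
The plan is to run the proof of Lemma~\ref{thin1}, but with the isotopic target surface allowed to be bicompressible rather than incompressible. First I would build the target. Fix a weak reduction of $\Sigma_0$: essential compressing disks $D_0\subset H_0$, $D_1\subset H_1$ with $\partial D_0\cap\partial D_1=\emptyset$, put in minimal position, with disjoint collar neighbourhoods $N(D_i)\cong D_i\times[-1,1]$ meeting $\Sigma_0$ exactly in the annuli $\partial D_i\times[-1,1]$. Using the injectivity radius bound $\delta_0>16\delta$ to guarantee room, perform an ambient isotopy of $N$ supported in $H_0$ that drags $N(D_0)$ along a "fat sausage" of cross-sectional diameter between $2\delta$ and $3\delta$ into a suitable region of $H_0$, and simultaneously one supported in $H_1$ doing the same to $N(D_1)$; let $\Sigma_{t_1}$ be the resulting surface. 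Then $\Sigma_{t_1}$ is isotopic to $\Sigma_0$; the image of each core $\partial D_i\times\{0\}$ is an essential simple closed curve of diameter $<4\delta$ bounding a disk of diameter $<4\delta$ to the $H_i$-side, so $\Sigma_{t_1}$ is $4\delta$-bicompressible; and since off the sausages $\Sigma_{t_1}$ agrees with the $8\delta$-locally incompressible surface $\Sigma_0$, while every essential closed curve meeting a sausage has diameter exceeding $\delta$, the surface $\Sigma_{t_1}$ is $\delta$-incompressible.

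It then remains to connect $\Sigma_0$ to $\Sigma_{t_1}$ by an isotopy of controlled area through $\delta$-incompressible surfaces, and here I would copy the proof of Lemma~\ref{thin1}. Construct Heegaard foliations $\mathcal{F}_0\ni\Sigma_0$, $\mathcal{F}_1\ni\Sigma_{t_1}$ by Haken's algorithm~\cite{Ha2} with a computable bound $C_1$ on the areas of all their leaves; since $\Sigma_0,\Sigma_{t_1}$ are isotopic Heegaard surfaces, extend to a $2$-parameter sweepout $\Sigma_{s,t}$ over $X=I\times I$ whose vertical edges carry $\mathcal{F}_0,\mathcal{F}_1$, whose horizontal edges are $1$-complexes, normalised so that $\Sigma_0=\Sigma_{0,1/2}$ and $\Sigma_{t_1}=\Sigma_{1,1/2}$. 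Passing to the saturation and running min-max exactly as in the proof of $(\ast)$: either the width is $\le C_1$, in which case the definition of width already yields a competitor sweepout with all areas $<C$, or the family is non-trivial and Lemma~\ref{bounds} bounds the width by $4\pi(g-1)$; either way we obtain a sweepout with the same boundary data and every slice of area $<C=\max\{C_1+1,\,4\pi(g-1)+1\}$, so any properly embedded path in $X$ from $(0,1/2)$ to $(1,1/2)$ gives an isotopy from $\Sigma_0$ to $\Sigma_{t_1}$ through surfaces of area $<C$.

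For $\delta$-incompressibility along the path I would run the green/red analysis of \cite{CGK}\,\S3: let $G$ (resp.\ $R$) be the compact, piecewise-smoothly bounded parameter set whose surface has a $\le\delta$ essential compression to the $H_0$- (resp.\ $H_1$-) side, arranged so that $G$ misses $\{0,1\}\times[1/2,1]$, $R$ misses $\{0,1\}\times[0,1/2]$, and every surface over $G$ (resp.\ $R$) has some $\le1.5\delta$ compression to the corresponding side. If $G\cup R$ does not separate $(0,1/2)$ from $(1,1/2)$ in $X$, route the path through the complement to $\Sigma_{t_1}$: all of its surfaces are $\delta$-incompressible and $\Sigma_{t_1}$ is $4\delta$-bicompressible, as wanted. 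If $G\cup R$ does separate, then by the edge conditions any separating connected piece must join the bottom edge of $X$ to its top edge, hence meets $G\cap R$ at a point whose surface is $1.5\delta$-bicompressible and a fortiori $4\delta$-bicompressible; routing the path from $(0,1/2)$ through the complement and truncating at the first such point gives the required isotopy, with all surfaces before time $t_1$ being $\delta$-incompressible. As in Lemma~\ref{thin1} this is carried out in the PL category (so "isotopy" means pinched crudely normal isotopy with respect to the relevant triangulation), $2\delta<\delta_0$ guarantees that the $1.5\delta$-bicompressible surface is genuinely weakly reducible, and non-Hakenness of $N$ enters exactly as there.

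The step I expect to be the main obstacle is the separation analysis just described: making precise that $G$ and $R$ are compact with controlled boundary, miss the designated edges of $X$, and overlap in exactly the way needed, all while every bound remains effectively computable; this is the same technical core as in Lemma~\ref{thin1}. A secondary difficulty particular to this lemma is checking that $\Sigma_{t_1}$ is simultaneously $\delta$-incompressible and $4\delta$-bicompressible, which constrains the geometry of the two sausages and uses both the injectivity radius hypothesis and the $8\delta$-local incompressibility of $\Sigma_0$ to rule out unexpected small compressions along or near them.
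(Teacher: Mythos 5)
Your proposal diverges from the paper at the very first step, and this is where the essential gap lies. You never invoke the key structural fact: since $N$ is irreducible and non-Haken, Casson--Gordon gives that a weakly reducible $\Sigma_0$ is in fact \emph{reducible}, hence a stabilization of a strongly irreducible splitting. The paper uses this to build the target foliation $\mF_1$ through a destabilized surface with a trivial handle whose meridian and longitude both have diameter $<\delta$; consequently \emph{every} leaf of $\mF_1$ is $\delta$-bicompressible, and moreover the destabilization lets one apply Theorem~\ref{t:t5} (or Theorem~\ref{pittsrubinstein}) and two applications of Lemma~\ref{bounds} at genus $g-1$ to bound the area of every leaf by $4\pi((g-1)-1)+1$. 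Your replacement construction --- pinching a weak reduction pair $(D_0,D_1)$ into $2\delta$-to-$3\delta$ ``sausages'' --- builds only a single bicompressible target surface, and its properties are not clearly controlled: it is not automatic that an ambient isotopy can simultaneously shrink both $\partial D_0$ and $\partial D_1$ into the prescribed diameter window without the pinching itself creating an essential curve of diameter $\le\delta$ (new waists, or curves straddling the collar and the unchanged region), nor that $8\delta$-local incompressibility off the collars survives in the required quantitative form.

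The second gap is in the path-finding step, and it is a consequence of the first. The paper's argument that a path $\alpha$ from $(0,1/2)$ exists with $\alpha\cap\partial X=\alpha_0$, $\delta$-incompressibility along $\alpha$, and a $4\delta$-bicompressible endpoint relies on the fact that the \emph{entire edge} $\{1\}\times I$ lies in the $\delta$-bicompressible locus: the green and red regions both meet that edge, so they cannot fail to overlap and the path must terminate at a point of $\bar G\cap \bar R$. In your version, only the single parameter $(1,1/2)$ carries a bicompressible surface, and your claim that ``by the edge conditions any separating connected piece must join the bottom edge of $X$ to its top edge'' does not follow: a separating component of $G\cup R$ could a priori lie entirely in $G$ (running between side edges through the bottom half), in which case no $G\cap R$ point and no bicompressible surface is produced. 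It is precisely the bicompressibility of all of $\mF_1$ that forces the overlap and makes the separation dichotomy clean. So your plan imports the thin1 machinery unchanged, whereas the whole point of the proof of Lemma~\ref{thin2} is that the boundary data on the $\{1\}$-side has been redesigned, using reducibility and Pitts--Rubinstein, to guarantee a bicompressible terminal point.
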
  

This means that at the scale of $\delta$, each $\Sigma_t$ looks incompressible, while at the scale of $4\delta$, $\Sigma_{t_1}$ is weakly reducible.  Note that the compressing discs need not have small diameter.

Here is the idea of the proof of Lemma \ref{thin2}.  As before we let $\mF_0$ be a Heegaard foliation extending $\Sigma_0$ and $C_1$ the maximal area of its leaves.  If $\Sigma_0$ is weakly reducible, then it is reducible, so it is isotopic to $\Sigma_1$ with Heegaard foliation $\mF_1$  such that each leaf of $\mF_1$ is $ \delta$-bicompressible.  Indeed $\Sigma_1$ can be taken to be a stabilization of a strongly irreducible splitting and  so $\mF_1$ can be constructed so that the curves in the trivial handle part of the stabilization  have diameter $<\delta$.   Further using Theorem 
\ref{t:t5} or Theorem \ref{pittsrubinstein} and two applications of Lemma \ref{bounds}, $\mF_1$ can be also constructed so  such that the area of each leaf is $<4\pi ((g-1)-1)+1<4\pi(g-1)$.    Thus there exists a 2-parameter sweepout parametrized by $X=I\times I$ such that for $i=0,1$, \ $ \Sigma_{i,t}$ are leaves of $\mF_0$ and $\mF_1$ and for $t\in [0,1],\ i\in \{0,1\}, \Sigma_{t,i}$ is a 1-complex.  Thus for $x\in \partial X$, $\area(\Sigma_x)< C=\max\{C_1+1,4\pi(g-1)+1\}$.  As in the proof of Lemma \ref{thin1}, there exists a sweepout $\Sigma_{s,t}$ parametrized by $X$ taking on the same boundary values as the original one, such that for each $x\in X, \area(\Sigma_x)< C$.  

Now parametrize $\mF_0$ so that $\Sigma_0=\Sigma_{0,1/2}$.  To complete the proof it suffices to find a smooth path $\alpha(t)\subset X, t\in [0, t_1]$ such that $\alpha_0=(0,1/2),\alpha\cap \partial X=\alpha_0$,  each $\Sigma_{\alpha(t)}$ is $\delta$-incompressible and $\Sigma_{\alpha(t_1)}$ is $4\delta$-bicompressible.  The idea of the proof that such an $\alpha(t)$ exists, is contained in the following figure.  See \S 3 \cite {CGK} for the details.
\begin{figure}[ht]
\includegraphics[scale=0.60]{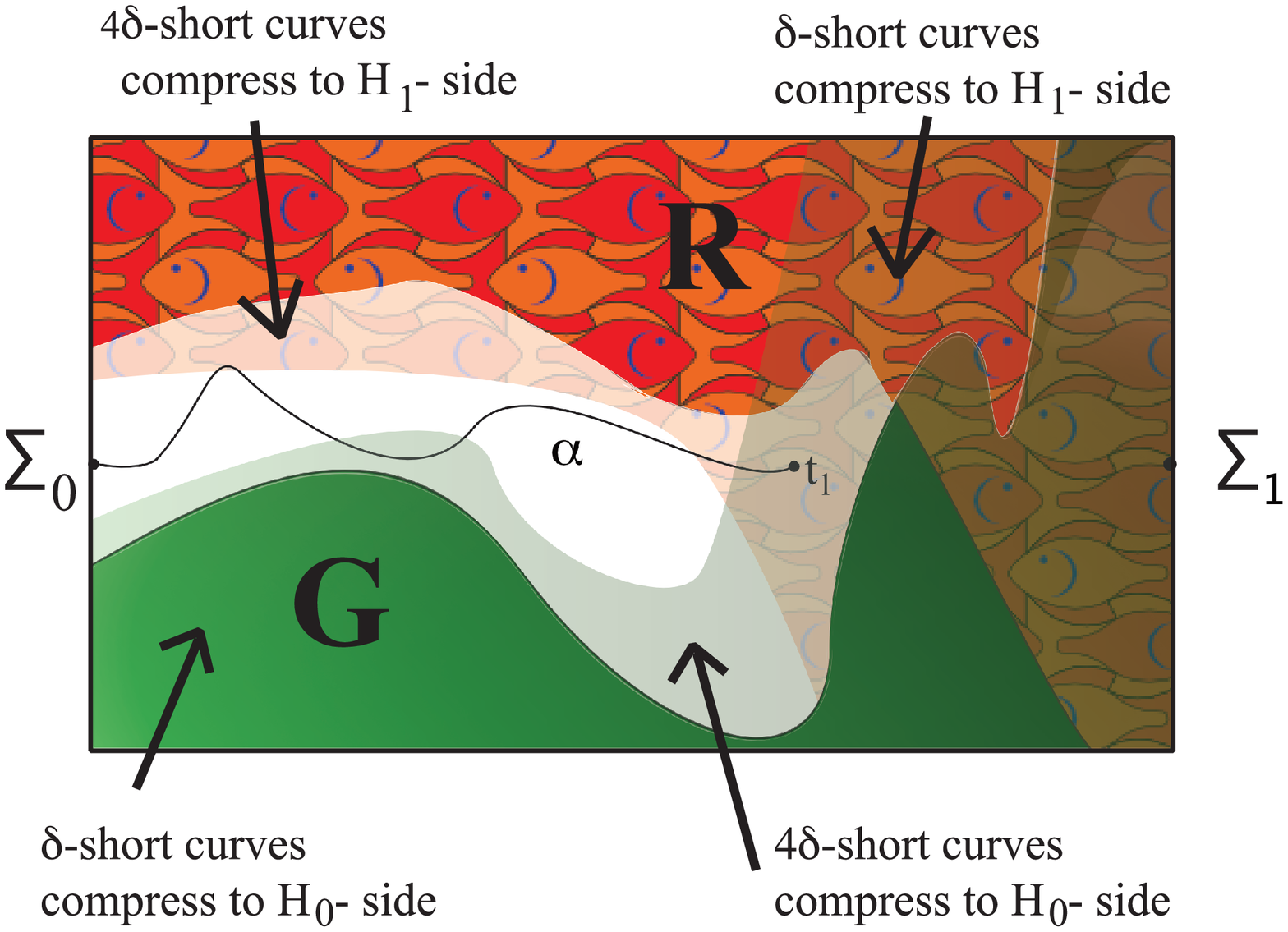}
\end{figure}

\section{Problems}

\subsection{The Heegaard tree}  

Associate to a 3-manifold $M$ a directed graph $\mathcal{T'}$ whose vertices are isotopy classes of Heegaard splittings of $M$ and an edge points from $v$ to $v'$ if a splitting representing $v$ is a stabilization of one representing $v'$.  The Reidemeister - Singer theorem \cite{Re}, \cite{Si} implies that any two Heegaard splittings have a common stabilization. Since stabilization is unique up to isotopy, $\mathcal{T'}$ is a tree.  We define the \emph{Heegaard tree} of $M$ to be the minimal subtree $\mathcal{T}(M)$ that contains all the irreducible splittings.  

\begin{problem} Give an effective algorithm to construct the Heegaard tree for a closed irreducible non-Haken 3-manifold $M$. In particular find an effective algorithm to compute how many stabilizations are needed to make distinct Heegaard splittings isotopic. \end{problem}

\begin{remarks} i)  Rubinstein and Scharlemann \cite{RS3} show that 5p+7q-9 stabilizations suffice where q (resp. p) is the larger (resp. smaller) genus of the Heegaard splittings.  See \cite {Tak} for an approach using singularity theory. Combining these results with \cite{Li2} and \cite{CG} it follows that $\mathcal{T}(M)$ is finite and the number of vertices is bounded by an effectively computable function.

ii) Johannson \cite{Jo2} gave a polynomial upper bound on the number of stabilizations needed for splittings of Haken manifolds to become equivalent.
 
iii) More than one stabilization may be needed \cite{HTT}, \cite{Ba4}, \cite{Jon2}.  

iv) For reducible 3-manifolds there are the foundational results of Bachman \cite{Ba2} and Qiu-Scharlemann \cite{QS} solving what was known as the Gordon conjecture.  They showed that if the closed 3-manifold $M$ is the connect sum of $M_1$ and $M_2$, then the sum of unstabilized splittings in $M_1$ and $M_2$ is unstabilized.  Bachman \cite{Ba2} further showed that an unstabilized splitting in $M$ has a unique expression as the connect sum of Heegaard splittings of prime 3-manifolds. \end{remarks}
\vskip 10pt

\subsection{Effective splitting of branched surfaces}
\begin{definition}  We say that the branched surface $B\subset M$ is \emph{quasi-hyperbolic} if it does not carry any sphere or torus, but fully carries a surface.  A \emph{regular splitting} of $B$ is one which opens up $B$ along its branch locus. \end{definition}

Note that fully carrying $\eta$-negatively curved branched surfaces are quasi-hyperbolic.

\begin{conjecture} \label{branched surface} Let $B$ be a branched surface in the compact triangulated atoroidal irreducible 3-manifold $N$.  Then there exist finitely many effectively constructible  quasi-hyperbolic branched surfaces $B_1, \cdots, B_n$ such that each $B_i$ is the result of passing to a subbranched surface of some regular splitting of $B$ and every strongly irreducible or incompressible surface carried by $B$ is carried by some $B_i$.\end{conjecture}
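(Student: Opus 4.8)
The plan is to run the argument of Proposition~\ref{r splitting} in the purely combinatorial category, replacing its geometric engine---Plante's bound, Theorem~\ref{plante}, played off against the exponential growth of hyperbolic balls from Remarks~\ref{growth}---by a combinatorial complexity on branched surfaces that strictly decreases under the relevant regular splittings. First I would fix a triangulation $\Delta$ of $N$ fine enough that every sector of $B$ is a union of normal pieces and $N(B)$ sits in $\Delta$ in the standard way, so that every surface carried by $B$ is normal with respect to $\Delta$, while a strongly irreducible Heegaard surface carried by $B$ is, after an isotopy supported in $N(B)$, almost normal. The bookkeeping device is that a (normal or almost normal) surface $S$ carried by $B$ is \emph{fully} carried by a canonical subbranched surface $B_S$ of a regular splitting of $B$: pass to the maximal regular splitting on which $S$ still has a well-defined nonnegative weight vector, and let $B_S$ be the union of the sectors on which that weight is positive. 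The target is to show that, as $S$ ranges over incompressible and strongly irreducible surfaces carried by $B$, only finitely many---and effectively listable---$B_S$ occur, and that each such $B_S$ is quasi-hyperbolic.

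Next I would clear away spheres and tori. If a subbranched surface $E$ of a regular splitting carries a $2$-sphere or torus, then since $N$ is irreducible and atoroidal that surface bounds a ball, or is compressible or $\partial$-parallel, which forces a disc of contact, a monogon, or a compressing annulus for $\partial_h N(E)$; a further regular splitting removes the responsible sector. This cannot discard an incompressible or strongly irreducible surface carried by $B$: such a surface, being $\pi_1$-injective or almost essential, never crosses a disc-of-contact sector with positive weight, by the usual innermost-disc and isotopy arguments (cf.\ Floyd--Oertel and Tao Li \cite{Li1}, \cite{Li2}). After finitely many removals every retained piece fully carries a surface yet carries no sphere or torus, i.e.\ is quasi-hyperbolic; so only the finiteness of the list remains in doubt.

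The real work is bounding the depth of regular splitting that incompressible and strongly irreducible surfaces can demand. Here Theorem~\ref{plante}, which is metric-independent on the branched-surface side, still applies: an incompressible surface carried by $B$ is itself a measured lamination carried by $B$, so its normal-combinatorial growth is bounded by an effectively constructible polynomial $p(B)$ in the normal distance. The point where hyperbolicity entered \cite{CG} was that an embedded disc of hyperbolic radius $r$ has area $\sim e^{2r}$, beating $p(B)$ once $r$ is large. Without a metric one must instead show directly that a surface $S$ whose support $B_S$ requires very deep regular splitting contains a long stack of mutually parallel normal sheets pressed along a branch sector, and then that beyond a computable depth---using Kneser--Haken finiteness for $(N,\Delta)$, which is effective---the stack yields a compressing disc for $S$ or a normal sphere or torus, contradicting incompressibility in the atoroidal irreducible $N$. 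For strongly irreducible $S$ one repeats this in the almost-normal category: beyond a computable depth a parallel stack joined by trivial interstitial tubes produces two disjoint compressing discs on opposite sides of $S$, exactly the weak reduction excluded by strong irreducibility, via the monogon-extension device of \cite{CG} (compare the region-between-sheets analysis in Proposition~\ref{mult1}). I expect the substitute for hyperbolic exponential growth to be the genuine obstacle: atoroidality together with irreducibility give, through geometrization, that the universal cover of $N$ has exponential volume growth outside the Seifert-fibered case---where one falls back on Moriah--Schultens \cite{MS}---but converting that coarse growth into an effective depth bound on regular splittings, without passing through a geometric triangulation (which would defeat the purpose; cf.\ Remark~\ref{effective}), is exactly what is missing. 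A monotone sutured-manifold complexity in the spirit of Gabai that strictly decreases under essential regular splitting would be an alternative route to termination, but its effectiveness is itself unclear.

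Finally, granting such a depth bound $d=d(B,\Delta)$, effectiveness is routine: regular splittings of $B$ form an algorithmically enumerable tree, so one lists all subbranched surfaces of regular splittings of $B$ to depth $d$, discards those that carry no surface and those carrying a $2$-sphere or torus (both decidable by normal-surface algorithms), and outputs the rest as $B_1,\dots,B_n$. Every incompressible or strongly irreducible $S$ carried by $B$ then has $B_S$ among the $B_i$ by the support description and the depth bound, which is the conclusion of the conjecture.
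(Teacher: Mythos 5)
The statement you were asked to prove is, in the paper, a \emph{conjecture}, not a theorem: the authors do not offer a proof, and Remark (i) following it explicitly attributes only the \emph{non-effective} version to Tao Li (Proposition 8.1 of \cite{Li1}), obtained via a compactness argument that provides no algorithm for producing the $B_i$. There is therefore no in-paper proof to compare your attempt against, and a complete argument would be a new result.

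That said, you have correctly reconstructed the intended framework and, more importantly, you have correctly located the unresolved crux. Your reductions---passing to the subbranched surface $B_S$ fully carrying a given $S$, eliminating disc-of-contact sectors and sphere/torus-carrying pieces by regular splitting, and observing that incompressible or strongly irreducible surfaces never put positive weight on the offending sectors---are all sound and parallel what Li does combinatorially and what \cite{CG} does geometrically in Proposition~\ref{r splitting}. The genuine gap is exactly the one you name: an \emph{effective} termination bound on the depth of regular splitting, playing the role that the $e^{2r}$ vs.\ $p(B)(r)$ comparison between hyperbolic area growth and Plante/Theorem~\ref{plante} polynomial growth plays in the hyperbolic-triangulation setting of \cite{CG}. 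Without a geometric triangulation you cannot invoke exponential volume growth directly, and neither Kneser--Haken finiteness nor the parallel-stack/monogon-extension mechanism by itself yields a computable depth bound in the combinatorial category; Li's compactness argument is precisely what papers over this, and it is exactly what an effective proof must replace. Your candidate substitute---a Gabai-style sutured complexity strictly monotone under essential regular splitting---is a reasonable direction, but as you note its effectiveness is itself unestablished, so at present your argument establishes at most the known non-effective statement. Remark (ii) after the conjecture records that resolving this gap would upgrade Theorem~\ref{main} to start from a combinatorial triangulation (cf.\ Remark~\ref{effective}~iii)), which is the motivation for posing it; you have essentially rederived why the problem is open rather than closed.
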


\begin{remarks}   i) In 2007 Tao Li proved the non effective version of this conjecture \cite{Li1}.  There the passage from $B$ to $B_1, \cdots, B_n$ is obtained via a compactness argument.  See Proposition 8.1 \cite{Li1}. 

ii) \S 7 \cite{CG} shows that a proof of this  conjecture would complete an effective  proof of Theorem \ref{main} starting only with a combinatorial triangulation.\end{remarks}
\vskip 10pt
\subsection{Index of common stabilizations}
The following is a special case of a conjecture of David Bachman \cite{Ba3}.

\begin{conjecture} If $M$ is a non-Haken Riemannian 3-manifold, then the minimal common stabilization surface $S$ of distinct irreducible Heegaard splittings is isotopic to a surface of index-$\le 2$.\end{conjecture}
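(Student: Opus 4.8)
\medskip
\noindent\emph{A proof strategy.}
Since $M$ is non-Haken, every irreducible Heegaard splitting is strongly irreducible by the Casson--Gordon theorem, and we may assume $M\ne S^3$ (otherwise the hypothesis is vacuous), so the minimal common stabilization $S$ of the two distinct irreducible splittings $\Sigma_1,\Sigma_2$ is weakly reducible of genus $g\ge 2$. The plan is to realize $S$ by a $2$-parameter min-max construction and then quote the upper index bound: it suffices to produce a non-trivial sweepout $\{\Sigma_{s,t}\}_{(s,t)\in I\times I}$ of $M$ by genus-$g$ surfaces whose width $W$ equals the area of a minimal surface isotopic to $S$ and whose min-max limit is exactly one copy of such a surface. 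Then $\mathrm{index}(S)\le 2$ by Proposition \ref{indexbounds}.

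First I would build the $2$-parameter family. Because $S$ is a common stabilization, there are Heegaard foliations $\mathcal G_1,\mathcal G_2$ of $M$, both having $S$ as a leaf, obtained by stabilizing the Heegaard foliations of $\Sigma_1,\Sigma_2$ along the two (a priori distinct) handle regions; concatenating a $1$-parameter path of sweepouts from $\mathcal G_1$ through the two destabilizing degenerations to $\mathcal G_2$ and closing up produces a square of sweepouts $\{\Sigma_{s,t}\}$ whose restriction to $\partial(I\times I)$ consists of $1$-complexes and of the genus-$g_i$ families associated to $\Sigma_i$. The key point — and the place where I expect to feed in that $S$ is the \emph{minimal} common stabilization and that $\Sigma_1\not\simeq\Sigma_2$ — is that this square cannot be swept-homotoped (through genus-$g$ families with the same $\partial(I\times I)$-values) into its boundary: otherwise one would extract a destabilization of $S$ of lower genus simultaneously dominating $\Sigma_1$ and $\Sigma_2$, or an isotopy $\Sigma_1\simeq\Sigma_2$. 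In Bachman's language this is exactly the statement that the two destabilizations give an essential loop in the disk complex $\mathcal D(S)$ while $\mathcal D(S)$ is connected, so that $S$ has topological index exactly $2$; this non-triviality forces $W>\sup_{\partial(I\times I)}\area$ and also $W$ strictly above the widths of the $1$-parameter problems attached to $\Sigma_1,\Sigma_2$, so the Min-Max Theorem \ref{SS} applies and the limit cannot be one of the old low-genus minimal surfaces.

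Next I would analyze the min-max limit $\sum n_j\Gamma_j$. By Theorem \ref{genusbounds} it is obtained from a genus-$g$ slice by surgeries, and I would run the multiplicity-one and no-essential-degeneration analysis in the spirit of Proposition \ref{mult1} and Proposition \ref{stableboundary}: a critical (index-$2$) surface admits essential compressions into both sides but only in a bounded, controlled way, so positive-genus orientable pieces occur with multiplicity one, and whenever the min-max sequence degenerates onto stable minimal surfaces one removes the stable pieces, minimizes the unstable boundary inward, and re-runs min-max on the resulting compression-body by Proposition \ref{stableboundary} (using the interpolation of Proposition \ref{main_deformation} to keep competitor families from gaining area near the stable boundary). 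Bumpiness, generic by White's theorem, makes the iteration terminate, yielding a minimal surface isotopic to $S$; the exceptional one-sided outcome is handled as in case (2) of Conjecture \ref{pitts}, and the general (non-bumpy) Riemannian case by perturbing to a bumpy metric, producing the index-$\le 2$ surface there, and passing to a varifold limit while tracking that genus and isotopy class are preserved.

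The main obstacle is this second half: the index-$2$ analogue of the Pitts--Rubinstein dichotomy, namely ruling out that the $2$-parameter min-max limit degenerates to something of strictly smaller complexity in a way the Proposition \ref{stableboundary}-iteration cannot repair (for instance a strongly irreducible genus-$<g$ surface of multiplicity one together with a stable piece, which would be index $1$, not $2$). For index $1$ this is precisely Theorem \ref{pittsrubinstein}, whose proof uses strong irreducibility essentially; for index $2$ one must replace ``strongly irreducible'' by ``critical'', control compressions to \emph{both} sides at once, and it is here that the minimality of the common stabilization must be re-inserted to force the limit to be $S$ rather than a simpler surface. A secondary, purely topological difficulty — needed both for the sweepout non-triviality and for the identification of the limit — is to prove that the disk complex of the minimal common stabilization is connected, so that its topological index is exactly $2$.
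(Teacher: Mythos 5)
This statement is Bachman's conjecture (in the special case stated), listed in \S 10 under open problems. The paper does not prove it, and no proof is currently known; the remark following it even notes that Appel's computations disprove some of Bachman's companion conjectures, so the precise form that survives is itself delicate. Your ``proposal'' is therefore not a proof but a sketch of an attack, and you yourself flag the two places where it breaks off.

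Those two places are not technical loose ends but the actual content of the open problem. First, the index-$2$ analogue of the Pitts--Rubinstein dichotomy is not known. The proof of Theorem \ref{pittsrubinstein} leans on strong irreducibility at every stage: it forces all essential compressions to one side, it yields the multiplicity-one statement of Proposition \ref{mult1}, and it makes the iteration via Proposition \ref{stableboundary} terminate at a surface isotopic to the Heegaard surface. A common stabilization $S$ is weakly reducible, so none of those arguments apply verbatim; replacing ``strongly irreducible'' by ``critical'' (topological index $2$) is exactly what one needs, and no version of Theorem \ref{genusbounds}/\ref{mult1}/\ref{stableboundary} is currently available in that setting. In particular, even granting a non-trivial $2$-parameter sweepout, there is nothing preventing the min-max limit from being, say, an index-$1$ strongly irreducible surface of lower genus together with stable pieces, or a union whose areas happen to exceed the $1$-parameter widths; Proposition \ref{indexbounds} only gives $\mathrm{index}\le 2$ for whatever the limit is, not that the limit is isotopic to $S$. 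Second, the claim that the minimal common stabilization has topological index exactly $2$ (i.e., $\Delta(S)$ connected with $\pi_1(\Delta(S))\ne 0$) is itself unproved; you need it both to get $W>\sup_{\partial(I\times I)}\mathrm{area}$ and to identify the limit, and it is part of what Bachman's conjecture is asserting rather than a lemma you can quote. So the outline is the natural one, but as a proof it is circular at the two points you identify, and the conjecture remains open.
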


\begin{remarks} i) In \cite{Ba3} Bachman defined the topological index of certain surfaces $H\neq T^2, S^2$ as follows.  Define the \emph{disc complex} $\Delta(H)$, the complex whose vertices are isotopy classes of embedded discs in $M$ that intersect $H$ exactly along their boundaries and in essential curves.  Its $m$-simplices are $m+1$-tuples of distinct vertices representable by pairwise disjoint discs.  Define $H$ to have topological index $0$ if $\Delta(H)=\emptyset$, and topologically of index-$k$, for $k\ge 1, $ if $ \pi_{k-1}(\Delta(H))$ is the first non trivial homotopy group of $\Delta(H)$.  He conjectured that a surface of topological index-k is isotopic to a surface of index-$\le k$.  For $k=0$, this is true by \cite{FHS}.  A Heegaard surface of topological index-1 is strongly irreducible \cite{CaGo1} and hence isotopic to an index-$\le 1$ surface by \cite{KLS}.  Note that by McCullough the disc complex of a handlebody is contractible \cite{Mc}, hence does not have a defined topological index.  

ii) In 2010 Daniel Appel \cite{Ap} proved that the Heegaard surface of genus $g\ge 2$ in the 3-sphere has topological index $2g-1$.  Furthermore these are the only connected surfaces in the 3-sphere with non trivial topological index.  On the other hand the minimal index for a Heegaard surface of genus-2 is at least $6$ \cite{Ur}.   Appel's results disprove Conjectures 5.6 and 5.9 of \cite{Ba3} and  give positive solutions to Questions 5.3, that there is a non Haken 3-manifold with surfaces of topological index $\ge 3$ and Question 5.5, that there is a 3-manifold with surfaces of arbitrarily high topological index.  Can Bachman's conjectures be modified to take   account of the special nature of $S^3$?\end{remarks}
\vskip 10 pt
\subsection{The Goeritz conjecture}
\begin{definition} Define the genus-$g$ Goeritz group $\mH_g$ as the group of isotopy classes of orientation preserving diffeomorphisms of the 3-sphere that leave the standard genus-g Heegaard splitting invariant.\end{definition} 

\begin{problem} Is  $\mH_g$ finitely generated and if so find a set of generators? \cite{Gor2}, \cite{Po}, \cite{Sc3}. \end{problem}

\begin{remarks} i) For $g=0, 1$ this is the trivial group.  L. Goeritz \cite{Gor2} answered this question for $g=2$.  A modern proof can be found in \cite{Sc3}.

ii) J. Powell \cite{Po} proposed a set of generators for the general $\mH_g$. His argument that they sufficed, had a gap.   See \cite{Sc3}.

iii) M. Freedman and M. Scharlemann recently proved that Powell's generators suffice for $\mH_3$, \cite{FS}.  E. Akbas \cite{Ak} and S. Cho \cite{Cho} showed that $\mH_2$ is finitely presented.

iv) Results on the analogous Goeritz group in 3-manifolds can be found in \cite{JM}.\end{remarks}

\vskip 10 pt
\subsection{More problems}
For other problems and questions see the Appendix of this paper, \cite{Li5}, \cite{So}, \cite{Go1} and \cite{BDS}.

\section{Appendix: On the Geometry of Handlebodies, David Gabai and Tobias Holck Colding}  

Motivated by the following remarkable result, this appendix speculates on the geometry of ultra large volume handlebodies with Cheeger constant uniformly bounded below. 

\begin{theorem} \label{tau} (Long - Lubotzky - Reid \cite{LLR}, Bourgain - Gamburd \cite{BoGa})
Let $N$ be a  closed, connected hyperbolic 3-manifold.  Then there exists an infinite tower $\cdots \to N_2\to N_1\to N_0=N$ such that 

i) For all $i$, $Ch(N_i)>c_1>0$,

ii) $\injrad(N_i)\to \infty$ and

iii) $N_j$ is regular cover of $N_i$ if $j>i$.

Here $Ch(N_i)$ denotes the  Cheeger constant of $N_i$.\end{theorem}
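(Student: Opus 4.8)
\emph{Proof proposal.} The plan is to realize the tower by a congruence-type construction on $\pi_1(N)$ and then to check the three properties in turn; the one serious input will be a uniform expansion estimate for the associated Schreier graphs.

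First I would set up the arithmetic. Write $\Gamma=\pi_1(N)$; it is a torsion-free cocompact Kleinian group, hence non-elementary and so Zariski-dense in $\mathrm{SL}_2$. Lifting the holonomy from $\mathrm{PSL}_2(\mathbb{C})$ to $\mathrm{SL}_2(\mathbb{C})$ (possible since $\Gamma$ is torsion-free), Mostow rigidity together with cocompactness makes every trace of $\Gamma$ an algebraic integer, so after conjugation $\Gamma\subset\mathrm{SL}_2(R)$ with $R=\mathcal{O}_{k,S'}$ a ring of $S'$-integers in a number field $k$, $S'$ finite (if the invariant quaternion algebra is a division algebra one replaces $\mathrm{SL}_2$ by the relevant $k$-form, which changes nothing below). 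By strong approximation (Matthews--Vaserstein--Weisfeiler, Nori) the reduction $\Gamma\to\mathrm{SL}_2(R/\mathfrak m)$ is onto for all but finitely many maximal ideals $\mathfrak m\subset R$. Using Chebotarev I would pick an infinite sequence of rational primes $p_1<p_2<\cdots$, all $\ge 5$, split completely in $k$, with primes $\mathfrak m_j\mid p_j$ of residue degree one for which $\Gamma\to\mathrm{SL}_2(\mathbb{F}_{p_j})$ is onto, and set $\Gamma_i=\ker\bigl(\Gamma\to\mathrm{SL}_2(R/\mathfrak m_1\cdots\mathfrak m_i)\bigr)$ and $N_i=\mathbb{H}^3/\Gamma_i$. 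Goursat's lemma (the simple groups $\mathrm{PSL}_2(\mathbb{F}_{p_j})$ are pairwise non-isomorphic, having distinct orders) gives $\Gamma/\Gamma_i\cong\prod_{j\le i}\mathrm{SL}_2(\mathbb{F}_{p_j})$. Each $\Gamma_i$ is normal in $\Gamma$ of finite, strictly increasing index, and $\Gamma_{i+1}\subset\Gamma_i$, so $\cdots\to N_2\to N_1\to N_0=N$ is a tower; since $\Gamma_j\triangleleft\Gamma$ forces $\Gamma_j\triangleleft\Gamma_i$ for $j>i$, the cover $N_j\to N_i$ is regular, giving (iii).

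Next I would verify (i). Fix a finite generating set $S$ of $\Gamma$. By normality and the isomorphism above, the Schreier graph of $\Gamma$ on $\Gamma/\Gamma_i$ with respect to $S$ is the Cayley graph $\mathrm{Cay}\bigl(\prod_{j\le i}\mathrm{SL}_2(\mathbb{F}_{p_j}),\bar S\bigr)$, i.e.\ a Cayley graph of the group $\mathrm{SL}_2(\mathbb{Z}/q_i\mathbb{Z})$ with $q_i=p_1\cdots p_i$ squarefree. Since $\bar S$ is the reduction of the Zariski-dense group $\Gamma$, the Bourgain--Gamburd theorem \cite{BoGa}, together with its extension to squarefree moduli (Bourgain--Varj\'u; or Salehi-Golsefidy--Varj\'u for perfect groups), shows that these graphs form an expander family, with graph Cheeger constant $\ge h_0>0$ independent of $i$. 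I would then pass from graphs to manifolds by a standard discretization: $N_i$ is tiled by $[\Gamma:\Gamma_i]$ translates of a fixed Dirichlet fundamental domain $\mathcal D$ for $\Gamma$, glued along faces exactly according to the Schreier graph, so rounding any separating hypersurface to a union of tiles compares $Ch(N_i)$ with the graph Cheeger constant up to multiplicative factors depending only on $\vol(\mathcal D)$, the face areas, and the in-tile isoperimetric constant --- hence only on $N$. Thus $Ch(N_i)\ge c_1>0$, which is (i); equivalently, by Cheeger--Buser, $\lambda_1(N_i)$ is bounded below, i.e.\ the tower has property $(\tau)$.

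Then (ii): if $1\ne\gamma\in\Gamma_i$ then $\gamma\equiv I\pmod{\mathfrak m_1\cdots\mathfrak m_i}$, so $\mathrm{tr}\,\gamma-2$ lies in an ideal of norm $q_i$; cocompactness makes $\gamma$ loxodromic, so $\mathrm{tr}\,\gamma\ne 2$, and then $\mathrm{tr}\,\gamma-2$ is a nonzero algebraic integer divisible by every $\mathfrak m_j$, whence $p_j\mid N_{k/\mathbb{Q}}(\mathrm{tr}\,\gamma-2)$ for all $j\le i$. For fixed $R_0>0$ the set of $1\ne\gamma\in\Gamma$ with translation length $\le R_0$ is finite, and each of their nonzero norms $N_{k/\mathbb{Q}}(\mathrm{tr}\,\gamma-2)$ has only finitely many prime divisors; let $P$ be their union. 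If some $1\ne\gamma\in\Gamma_i$ had translation length $\le R_0$ then $\{p_1,\dots,p_i\}\subset P$, impossible once $i>|P|$. Hence the shortest closed geodesic in $N_i$ has length tending to $\infty$, i.e.\ $\injrad(N_i)\to\infty$, which is (ii). The main obstacle is the uniform expansion used in (i): for \emph{arithmetic} $N$ it is classical (Selberg's eigenvalue bound, and property $(\tau)$ for congruence subgroups of arithmetic lattices via Burger--Sarnak and Clozel), but for a general, possibly non-arithmetic, $N$ it is exactly the Bourgain--Gamburd ``superstrong approximation'' machinery \cite{BoGa} --- a genuine sum--product and additive-combinatorics input --- that supplies the spectral gap; combined with the Long--Lubotzky--Reid strong-approximation setup \cite{LLR} it makes the argument go through in full generality. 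The remaining steps (the $\mathrm{SL}_2$-lift, surjectivity onto the product after discarding $p\in\{2,3\}$, the tiling comparison, and the trace-divisibility estimate) are routine.
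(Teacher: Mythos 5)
The paper does not give a proof of this theorem --- it is stated and cited directly from \cite{LLR} and \cite{BoGa} --- so there is nothing in the paper to compare against, and I am judging your reconstruction on its own merits. Your argument is essentially correct and uses exactly the machinery behind those references: strong approximation to produce surjections onto $\mathrm{SL}_2$ of finite quotients, a Bourgain--Gamburd-type expansion theorem for the uniform Cheeger bound, a tiling comparison to pass from Schreier graphs to the covering manifolds, and trace divisibility for the injectivity radius. The one structural difference is your choice of tower: you take kernels modulo products $\mathfrak m_1\cdots\mathfrak m_i$ of distinct primes, so the quotients are $\prod_{j\le i}\mathrm{SL}_2(\mathbb F_{p_j})$, and the expansion input you really need is the Bourgain--Varj\'u / Salehi-Golsefidy--Varj\'u extension to squarefree moduli, not the cited \cite{BoGa}. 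That reference is specifically about $\mathrm{SL}_2(\mathbb Z/p^n\mathbb Z)$, which indicates the intended tower is built from powers of a single prime $\mathfrak p$, where \cite{BoGa} applies as cited. Both towers work and deliver (i)--(iii); the single-prime version is the one matching the paper's references.

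Two technical points you mark as routine deserve more care. First, ``cocompactness makes every trace an algebraic integer'' is not automatic: Mostow gives algebraicity, and integrality requires a Bass--Serre argument (a trace with negative $\mathfrak p$-adic valuation yields a nontrivial action on the Bruhat--Tits tree and hence a splitting), which can fail for Haken $N$; your $\mathcal O_{k,S'}$ formulation already absorbs this, provided the chosen $p_j$ avoid $S'$, which you implicitly arrange. Second, conjugating all of $\Gamma$ (rather than the finite-index subgroup $\Gamma^{(2)}$) into $\mathrm{SL}_2(R)$ or the unit group of an order is not automatic when the invariant quaternion algebra is a division algebra; but part (iii) requires $\Gamma_j\triangleleft\Gamma$, not merely $\Gamma_j\triangleleft\Gamma^{(2)}$. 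The clean repair is to use the adjoint representation $\Gamma\to\mathrm{PGL}_2\hookrightarrow\mathrm{SO}_3$, defined on all of $\Gamma$ over the invariant trace field, or to replace $\Gamma_j$ by the intersection of its finitely many $\Gamma$-conjugates (checking expansion survives). Neither issue is fatal, and both are handled in the cited literature.
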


In addition we have 

\begin{theorem} (Lackenby \cite{La2})

iv) For all $i$, $ g(N_i)/\vol(N_i)>c_2>0$, where $g(N_i)$ denotes the Heegaard genus of $N_i$.\end{theorem}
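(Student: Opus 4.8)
The plan is to combine the width bound for sweepouts in hyperbolic $3$-manifolds, Lemma~\ref{bounds}, with the isoperimetric content of item~(i) of Theorem~\ref{tau}. Fix $i$ and write $g := g(N_i)$ for the Heegaard genus and $v := \vol(N_i)$. A minimal-genus Heegaard splitting $N_i = H_0 \cup_\Sigma H_1$ gives a $1$-parameter sweepout $\{\Sigma_t\}_{t\in[0,1]}$ by genus-$g$ surfaces degenerating to the spines of $H_0$ and $H_1$ at the endpoints; as in \S4 the isoperimetric inequality forces the width $W$ of its saturation $\Lambda$ to be positive, so the family is non-trivial and Lemma~\ref{bounds} gives $W \le 4\pi(g-1) < 4\pi g$.

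For the matching lower bound I would show that \emph{every} family in $\Lambda$ has a slice of area at least $\tfrac12\,Ch(N_i)\,v$. Any such family is obtained from $\{\Sigma_t\}$ by an ambient isotopy, so for $t$ in the interior of $[0,1]$ its slice is a closed embedded surface isotopic to $\Sigma$, hence separates $N_i$; let $A_t \subset N_i$ be the region swept out up to time $t$. Then $t \mapsto \vol(A_t)$ is continuous with $\vol(A_0)=0$ and $\vol(A_1)=v$, so by the intermediate value theorem there is $t_0$ with $\vol(A_{t_0}) = v/2$. Applying the definition of the Cheeger constant to $A_{t_0}$ and using item~(i) of Theorem~\ref{tau}, $\area(\Sigma_{t_0}) = \area(\partial A_{t_0}) \ge Ch(N_i)\cdot\min\{\vol(A_{t_0}),\,v-\vol(A_{t_0})\} = \tfrac12\,Ch(N_i)\,v \ge \tfrac12\,c_1 v$. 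Taking the supremum over $t$ and then the infimum over $\Lambda$ gives $W \ge \tfrac12 c_1 v$. Combining the two estimates, $\tfrac12 c_1 v \le W < 4\pi g$, hence $g(N_i)/\vol(N_i) > c_1/(8\pi) =: c_2 > 0$, a bound independent of $i$.

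The main obstacle is the regularity bookkeeping behind the bisecting-slice step: one must ensure that for generic interior parameters the slice is genuinely an embedded surface to which the isoperimetric Cheeger constant applies, that $t \mapsto \vol(A_t)$ is continuous with the stated endpoint values across the degeneration of the slices to $1$-complexes, and that the swept region exhausts $N_i$ as $t \to 1$. This is routine within the sweepout framework, but one should fix the representative family carefully, or first perturb $\{\Sigma_t\}$ so that all but finitely many slices are smooth embedded surfaces and argue with those. One also uses that $N_i$, being a cover of the hyperbolic manifold $N$, is hyperbolic and so has Heegaard genus $g \ge 2$, making $4\pi(g-1)$ a genuine positive upper bound — though in fact only the weaker inequality $4\pi(g-1) < 4\pi g$ is needed above.
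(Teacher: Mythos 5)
The paper states this result as a theorem of Lackenby and does not supply a proof of its own, so there is no in-paper argument to compare against; your proposal is a correct reconstruction and is, in substance, Lackenby's argument from \cite{La2}. The two ingredients are exactly the right ones: a min-max/width upper bound linear in the genus, and the bisecting-slice Cheeger estimate using Theorem~\ref{tau}(i). The one genuine difference in route is in how the width upper bound is obtained. You invoke Lemma~\ref{bounds}, which rests on the Simon--Smith min-max theorem together with Ketover's genus bounds (Theorem~\ref{genusbounds}); Lackenby, writing in 2006, did not have those genus bounds and instead produced bounded-area sweepouts by untelescoping an arbitrary Heegaard splitting into a Scharlemann--Thompson generalized Heegaard splitting, bounding the area of the incompressible ``thin'' surfaces by Gauss--Bonnet after isotoping them to stable minimal representatives, and handling the strongly irreducible ``thick'' surfaces with a partial version of the Pitts--Rubinstein programme. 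Your route is shorter and, with Lemma~\ref{bounds} in hand, applies directly to the full genus-$g$ Heegaard sweepout without any untelescoping, which is a real simplification (and conveniently avoids any case distinction between Haken and non-Haken $N_i$). You correctly flag the only delicate points --- continuity of $t\mapsto\vol(A_t)$ across the degeneration at $\partial X$, regularity of the generic interior slice, and that non-triviality of the sweepout is forced by the isoperimetric inequality since the endpoint slices are $1$-complexes of zero area --- and these are routine in the Simon--Smith framework. No gap.
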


\begin{remarks}  1)  Recall that the \emph{Cheeger constant} $(M) = \inf \{\area(A_1\cap A_2)/\min\{\vol(A_1), \vol(A_2)\}\}$ where $A_1\cup A_2=M$ and $A_1\cap A_2=\partial A_1=\partial A_2$.

2) Clozel \cite{Cl} proved that congruence covers of a given closed arithmetic 3-manifold have a uniform lower bound for their Cheeger constants.\end{remarks}

\begin{question} \label{vhc} Given a sequence $\{N_i\}$ as above, is $N_j$ Haken for $j$ sufficiently large?  \end{question}

\begin{remarks} \label{goal} 1)  We investigated this question in an attempt to address the now resolved virtual Haken conjecture due to I. Agol \cite{Agol}, D. Wise and his collaborators e.g. \cite{Wi}, and J. Kahn - V. Markovic \cite{KM}.  As far as we know,  Question \ref{vhc} is still open.     

2)  In general either the $N_i$'s are eventually Haken or by \cite{CaGo1} there is a sequence $(H^i_0, H^i_1)$ of minimal genus strongly irreducible Heegaard splittings of the $N_i$'s.  By the Pitts - Rubinstein conjecture \ref{pitts}, \cite{KLS} we can assume that $S^i=H^i_0\cap H^i_1$ is minimal of index-$\le 1$.  By i) and iv)  both $Ch(N_i)$ and $g(N_i)/\vol(N_i)$ are uniformly bounded below. 

3) This leads to the following question which we think is of independent interest and worthy of further study.  Papers \cite{CG} and \cite{CGK} had its origins in efforts to understand this question.\end{remarks}

\begin{question} \label{handlebody geometry}  Let $(H_0, H_1)$ be a strongly irreducible Heegaard splitting of the closed hyperbolic 3-manifold $N$ with $H_0\cap H_1=S$.  What is the \emph{geometry} of $H_0$ and $H_1$ if $S$ is a minimal surface of index-$\le 1$ and $Ch(N)$  and $g(N)/\vol(N)$ are uniformly bounded below where $\vol(N)>>0$.\end{question}  

We now briefly describe four \emph{types} of handlebodies and for the larger of $H_0$ and $H_1$ offer a potential model.  The first three describe known constructions.    \vskip 10 pt

\noindent\textbf{Neighborhoods of 1-complexes.}  This is the way we usually first think of handlebodies.  Here the neighborhood is not meant to be too large, so that each element of a \emph{complete set} of compressing discs, i.e. a set that cuts the handlebody into a ball,  has both small area and boundary length.  
\vskip 10 pt

\noindent\textbf{Generalized Cannon - Thurston handlebodies.}    i) Let $S$ be a closed surface of genus $g$ with two transverse binding measured foliations (or geodesic laminations) $(\mu_1, dx)$ and $(\mu_2, dy)$.  \emph{Binding} means that every essential simple closed curve $\gamma$  has positive measure with respect to one of $\mu_1$ or $\mu_2$, where the measure of a curve is calculated as the infimum over all curves in its homotopy class.   This defines a natural pseudo-metric $ds_0^2=dx^2+dy^2$ on $S$.  Given $\lambda>0$ and $k>1$, J. Cannon and W. Thurston \cite{CT} define an infinitesimal pseudo-metric $\rho$ on $S\times (-\infty, \infty)$ by $ds^2=k^{2t} dx^2 + k^{-2t}dy^2 +\lambda^2 dt^2$, where $t$ comes from the second factor.  They showed that if $\mu_1, \mu_2$ are the invariant foliations of a pseudo-Anosov mapping $f$ with stretch factor $k$, then $\rho$ is quasi-comparable with the hyperbolic metric $\hat\rho_f$ on $S\times (-\infty, \infty)$.  The latter is the pull back to the infinite cyclic cover, of  the hyperbolic metric $\rho_f$ on the closed manifold $M_f$ that fibers over $S^1$ with monodromy $f$.  Here the infinite cyclic cover $S\times (-\infty, \infty)$ is parametrized so that $S\times [i,j]$ is the manifold obtained by gluing $j-i$ fundamental domains.  See \cite{CT} for a detailed analysis of  $\rho$  on $S\times (-\infty, \infty)$ and  its pull back to $\BH^2\times (-\infty, \infty)$.

Given $g$ pairwise disjoint simple closed curves in $S\times 0$ cutting it into a planar surface, and $i\in \BN$ construct a handlebody $H_i$ by restricting the $\rho$ metric to $S\times [0,i]$ and attaching 2-handles to the $S\times 0$-side along these curves and capping off the resulting 2-sphere with a 3-ball.   We call such a handlebody a \emph{Cannon - Thurston Handlebody}.  
\vskip10pt

\noindent ii)  Let $f:S\to S$ be a pseudo-Anosov mapping of a closed surface of genus-$g$ and $M_f$ the mapping torus.  Let $\rho_f$ be the hyperbolic metric on $M_f$ and  $\epsilon >0$.  Let $H_i$ be the handlebody of genus-$g$ parametrized as $S\times [0,i]$ with a handlebody attached to the $S\times 0$-side.  H. Namazi and J. Souto \cite{NaSo} showed that if $i$ is sufficiently large, then one can construct a hyperbolic structure on $H_i$ whose restriction to $S\times [0,i]$ is $\epsilon$-close to the restricted metric $\hat\rho_f$ defined in i).  J. Hass, A. Thompson and W. Thurston \cite{HTT} showed  by modifying the Namazi - Souto construction, that for $j$ sufficiently large, there exists a Riemannian metric $\rho_j$ on  $H_j$ with sectional curvatures between $-1-\epsilon$ and $-1+\epsilon$ such $\rho_j$ coincides with $\rho_f$ near $S\times j$ and hence, for $k>j$ one can construct $\rho_k$ on $H_k$ which coincides with $\rho_j$ on $H_j$ and $\hat\rho_f$ on $S\times [j,k]$.
\vskip10pt

iii) The following is a very rough approximation of a special case of a construction of \cite{BMNS1} due to J. Brock, Y. Minsky, H. Namazi and J. Souto.   Construct a handlebody $K$ by gluing together finitely many compression bodies, where the $-$-side of one is glued to the $+$-side of another.  Geometrically, there is a long product region between any two compression bodies whose fiberwise metric is given by a segment of a thick Teichmuller geodesic.  The compression bodies themselves have fixed Riemannian metrics.  
\vskip 8pt

\begin{remark}  Note that i) yields handlebodies with either small Cheeger constant $C$ or compressing discs whose boundary lengths are of the order $\log(g)$.  Here we assume that short curves are used on $S\times 0$.  In ii) and iii) either there is a small Cheeger constant or the geometric complexity of the manifold is mostly in the unknown compression bodies.  For us, these are the regions of interest.  \end{remark} 

\noindent\textbf{Neighborhoods of Gropes.}  Let $H$ be a handlebody inductively constructed as follows.  Start with $G_1= S\times I$, where $S$ is a compact surface with non empty connected boundary.   Construct the second stage  $G_2$ as follows.  Glue $T_1\times I, \cdots, T_n\times I$ to $S\times 0\cup S\times 1$ where $T_1, \cdots, T_n$ are compact surfaces with non empty connected boundary and the $(\partial T_i)\times I$'s are glued to pairwise disjoint neighborhoods of  simple closed curves $\gamma_1,\cdots, \gamma_m$ on $S\times \{0,1\}$.  We require that if $P_2$ is the projection to $S$ of the union of the  $\gamma_i$'s, then $S$ deformation retracts to a 1-complex that includes $P_2$.   In a similar manner for some $k\in \BN$ construct $G_3, G_4,\cdots G_k$ where each $G_p$ is obtained from $G_{p-1}$ by attaching thickened surfaces to $G_{p-1}\setminus G_{p-2}$ as above.    It is an exercise to show that $G_k$ is a handlebody and to find systems of compressing discs.  $G_k$ naturally deformation retracts to a 2-complex called a \emph{grope}.  See \cite{FQ}. 

\begin{remark}  Suppose that for each  $T_i\times I$ of $G_p\setminus G_{p-1}, \length(\partial T_i)$ and the total length of a set of arcs cutting $T_i$ into a disc are uniformly bounded, as is the length of the $I$-fibers.   Since for $i<j$ annuli of uniformly bounded area separate off components of $G_k\setminus G_i$, neighborhoods of gropes generally have very small Cheeger constant.  Also, $G_k$ has some compressing discs with  boundaries of uniformly bounded length, but a complete set of compressing discs contains discs whose lengths grow exponentially in $k$.  For example, let $T$ denote the torus with an open disc removed.  Let $G_1=T\times I$ and $G_{i+1}$ be obtained from $G_{i}$ by attaching a single $T\times I$ to the $i$'th stage.      Suppose that $\vol(T\times I)=1$ and the length of $\partial T$ and the lengths of two disjoint essential arcs in $T$ have length 1.  Then $\vol(G_k)=k$ and $G_k$ has a compressing disc of approximate boundary length 2, but a complete system requires a disc of boundary length approximately $2^k$.\end{remark}
This brings us to our thoughts towards addressing Question \ref{handlebody geometry}.
  
\begin{conjecture} There exists $C>0$ such that if $H^i_0, H^i_1$ are as in Remark \ref{goal}, then by switching if necessary $H^i_0$ and $H^i_1$,  if $D$ is a compressing disc for $H^i_0$, then $\length(\partial D)>C g$.  \end{conjecture}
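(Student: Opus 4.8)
\medskip\noindent\emph{Towards a proof.} The plan is to argue by contradiction using the uniform Cheeger bound $Ch(N_i)>c_1$ of Theorem \ref{tau}(i) together with Lackenby's inequality $g(N_i)/\vol(N_i)>c_2$. If the Conjecture fails then, by a diagonal argument, after passing to a subsequence both handlebodies $H_0^i$ and $H_1^i$ admit compressing discs whose boundary lengths are $o(g_i)$, $g_i=g(N_i)$; relabelling so that $\vol(H_0^i)\ge \tfrac12\vol(N_i)$, we fix such a disc $D_i$ for the ``big'' side $H_0^i$ with $\ell_i:=\length(\partial D_i)=o(g_i)$.

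\emph{Step 1 (geometric normalization).} By Casson--Gordon \cite{CaGo1} the minimal genus splitting $(H_0^i,H_1^i)$ is strongly irreducible, so by Theorem \ref{pittsrubinstein} we may take $S^i=H_0^i\cap H_1^i$ minimal of index $\le 1$; hence $S^i$ has intrinsic curvature $\le -1$ and, by Lemma \ref{hyperbolicareabound}, $\area(S^i)<4\pi(g_i-1)$. Combined with $\vol(H_0^i)+\vol(H_1^i)=\vol(N_i)$ and Lackenby's bound this gives $\vol(N_i)\asymp g_i$ and $\vol(H_0^i)\asymp g_i$. I would always arrange $\gamma_i:=\partial D_i$ to be a geodesic of $S^i$ (replacing $D_i$ by a least area compressing disc with the same boundary class), so that $\area(D_i)\le C_0\ell_i$ by the linear isoperimetric inequality in $\mathbb{H}^3$, and so that any subsurface $F\subset S^i$ of genus $a$ bounded by $\gamma_i$ has $\area(F)\le 4\pi a$ by Gauss--Bonnet and $K_{S^i}\le -1$. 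Thus $D_i$ cuts $H_0^i$ into two (generically two) handlebodies $P_i,Q_i$ of genera $a_i\le b_i$, $a_i+b_i=g_i$, with $\partial P_i=F_{0,i}\cup D_i$ and $\area(\partial P_i)\le 4\pi a_i+C_0\ell_i$; in particular, if $a_i=o(g_i)$ then $\area(\partial P_i)=o(g_i)$.

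\emph{Step 2 (the favourable case).} Since $\vol(N_i)\asymp g_i$, if $\min\{\vol(P_i),\vol(N_i\setminus P_i)\}\ge \tfrac1{10}\vol(N_i)$ then Cheeger forces $\area(\partial P_i)\ge c_1\cdot\tfrac1{10}\vol(N_i)\gtrsim g_i$, contradicting $\area(\partial P_i)=o(g_i)$ when $a_i=o(g_i)$. So in the subcase where the short disc cuts off a genus $o(g_i)$ piece that is also ``spread out'' in $N_i$ we are done; likewise if the short disc cuts $H_0^i$ roughly in half in genus while the two pieces have comparable, hence $\asymp g_i$, volumes, one gets the same contradiction after tracking constants (this forces one to show $\area(\partial P_i)$ is genuinely smaller than $c_1\vol(P_i)$, using the sharp Gauss--Bonnet bound $\area(F)\le 4\pi a-2\pi$ against the sharp linear isoperimetric constant). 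What remains are the \emph{bad} configurations: the short disc cuts off a handlebody $Q_i$ that has large genus ($b_i\asymp g_i$) but small volume ($\vol(Q_i)=o(g_i)$), bounded by a sub-piece $F_{1,i}$ of the minimal surface $S^i$ of area $\asymp g_i$, together with the disc $D_i$. Heuristically $Q_i$ would then have to be a thin regular neighbourhood of a $1$-complex (or grope) with many long edges, i.e.\ one of the model handlebodies discussed above, for which the Cheeger constant is small -- but this is about $Q_i$, not $N_i$, and the Heegaard surface $F_{1,i}\subset\partial Q_i$ has area $\asymp g_i$, so no cut of $N_i$ running along $F_{1,i}$ is cheap enough to contradict $Ch(N_i)>c_1$ directly.

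\emph{Step 3 (ruling out the thin handlebody -- the hard part).} The remaining and genuinely difficult step is to show that such a thin large genus handlebody $Q_i$ cannot occur. Here I would bring in the structure theory for index $\le 1$ minimal surfaces: by Theorems \ref{t:t1}--\ref{t:t4} the second fundamental form of $S^i$ is uniformly bounded away from its single possible catenoidal neck, $S^i$ is carried by one of finitely many $\eta$-negatively curved branched surfaces, and by Theorem \ref{plante} it has polynomial area growth and uniformly bounded embedded disc radius. A thin handlebody of genus $b_i\asymp g_i$ bounded by a subsurface of such a minimal surface $F_{1,i}$ (of area $\asymp b_i$ with curvature $\le -1$ and bounded $|A|$) sitting in a hyperbolic manifold with $\injrad(N_i)\to\infty$ should be impossible: large injectivity radius forces the many independent loops of $Q_i$ that are not null homotopic in $N_i$ to be long, while strong irreducibility -- which prevents $\gamma_i$ from bounding a disc in $H_1^i$ and, more subtly, constrains the geometry of $H_1^i$ in a bounded neighbourhood of $F_{1,i}$ -- obstructs the ``thin'' model, contradicting $\vol(Q_i)=o(b_i)$. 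If this can be made rigorous, it forces $a_i\asymp g_i$ and $\vol(P_i),\vol(Q_i)\asymp g_i$ back into the favourable case of Step 2, yielding the contradiction; one then also has to dispatch the case where $\partial D_i$ is non separating on $S^i$ (cutting $H_0^i$ to a genus $g_i-1$ handlebody, handled by the same volume/area bookkeeping) and the bookkeeping in the ``switching'' clause. The main obstacle is exactly this Step 3: turning the local structure of index $\le 1$ minimal surfaces plus a large injectivity radius into a lower bound on the volume of a handlebody bounded by a piece of such a surface, equivalently, excluding the ``Cannon--Thurston / grope'' models as the large side -- this is why the statement is posed as a conjecture rather than a theorem.
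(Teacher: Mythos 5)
This statement is posed as an open \emph{conjecture} in the paper's speculative appendix, and the paper offers no proof of it, so there is no authorial argument to compare your proposal against. Your sketch is in the spirit of the appendix (the discussion of ``gropes,'' ``zipped-up'' and ``Cannon--Thurston'' handlebodies is exactly the authors' informal cataloguing of the bad configurations your Step 3 would have to rule out), and you are candid that Step 3 is an acknowledged gap, not a lemma. That honesty is appropriate: the heart of why the statement is a conjecture is precisely that nobody knows how to exclude a thin, small-volume, large-genus piece bounded by a chunk of an index-$\le 1$ minimal surface, and nothing in Theorems \ref{t:t1}--\ref{t:t4} or Theorem \ref{plante} (which are about intrinsic geometry of the minimal surface, not the volume on one side of it) currently forces a lower volume bound on such a piece.

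Beyond Step 3, though, your ``favourable case'' in Step 2 also has a hole. The Cheeger bound only kicks in when \emph{both} $\area(\partial P_i)=o(g_i)$ \emph{and} $\min\{\vol(P_i),\vol(N_i\setminus P_i)\}\gtrsim g_i$; but if $a_i=o(g_i)$ and $\vol(P_i)$ is small, you get nothing, and that is in fact the generic expectation for a genus-$o(g_i)$ handlebody. Your parenthetical attempt to salvage the $a_i\asymp g_i$ subcase by playing the sharp Gauss--Bonnet bound $\area(F_{0,i})\le 4\pi a_i-2\pi$ against $c_1\vol(P_i)$ cannot close: Cheeger constants of closed hyperbolic $3$-manifolds are bounded above (by roughly $2$, and certainly well below $4\pi$), and the ratio $\vol(N_i)/g_i$ is a fixed constant of the tower, so $4\pi a_i$ need not be smaller than $c_1\vol(P_i)$ for any admissible $c_1$. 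Finally, the reduction ``if the Conjecture fails then both handlebodies admit $o(g_i)$-length compressing discs'' is a correct negation only if you take care of the quantifier over the constant $C$ and pass to a diagonal subsequence, which you gesture at but should be spelled out. None of this is a criticism of the strategy---the Cheeger/Lackenby framework is the natural one and matches the authors' intent---but even the ``easy'' part of your outline is not yet a proof, and you should not present Step 2 as settled.
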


\noindent\textbf{Zipped up Handlebodies.}  Let $K$ be a compact connected 1-complex each of whose edges has length 1.  Let $V_0$ denote a small 3-dimensional neighborhood of $K$ with $\{D_j\}$ a  family of \emph{standard} compressing discs for $V_0$, one for each edge of $K$.  Let $\gamma\subset \partial V_0$ denote an embedded arc which intersects each $D_j$ at least twice.  Let $z_0$ be the midpoint of $\gamma$.  Parametrize  by arc length the closure $\alpha_0, \alpha_1$ of the components  of $\gamma\setminus z_0$ by $[0,L]$, where $\alpha_0(0)=\alpha_1(0)=z_0$.  Identify $N(\alpha_i)\subset \partial V_0$ with $[0,L]\times [0,\epsilon]$.  Let $B=[0,L-1]\times [0,\epsilon]\times [0,2]$.  Now glue $B$ to $V_0$ to obtain $V_1$ so that, after rounding corners  $\gamma\times [0,\epsilon]$ is identified with $[0,L-1]\times [0,\epsilon]\times \{0,2\}\cup 0\times [0,\epsilon]\times [0,2]$  where $z\times [0,\epsilon]$ is identified with $0\times [0,\epsilon]\times 1$.  See Figure A.1.   If the gluing is essentially arc length preserving, then metrically, $V_1$ is close to the handlebody obtained by gluing $\alpha_0\times[0,\epsilon]$ to $\alpha_1\times[0,\epsilon]$. We say that $V_1$ is obtained from $V_0$ by  \emph{zipping along $\gamma$} and that $V_1$ is a \emph{zipped up handlebody}.

\begin{remark} It is not difficult to construct such a $\gamma$ that hits each $D_j$ exactly two times.\end{remark}

\begin{figure}[ht]
\includegraphics[scale=0.60]{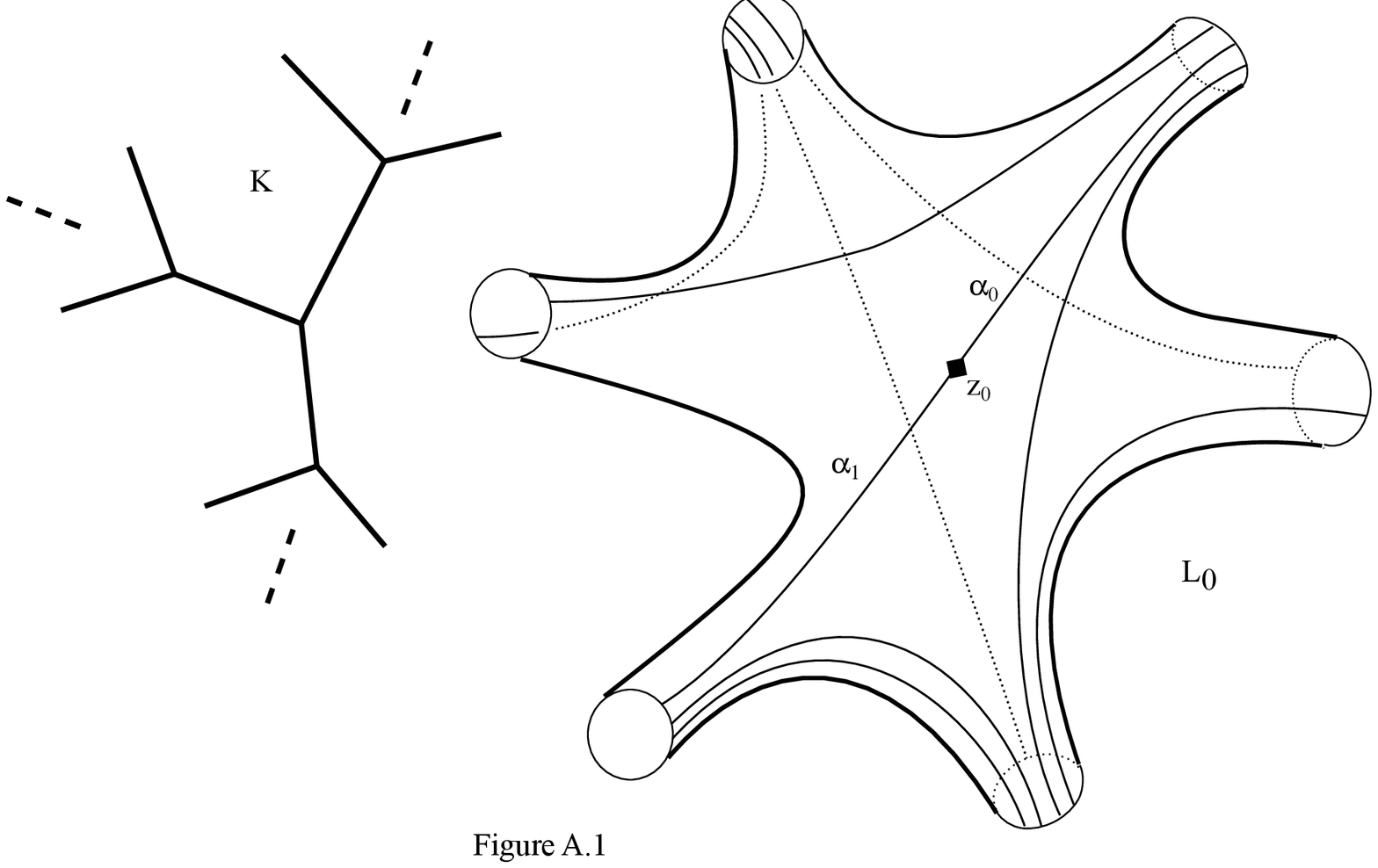}
\end{figure}
The \emph{obvious} compressing disc $E_i\subset V_1$  arising from $D_i$ is obtained by extending $D_i\subset V_0$ into $B$ and attaching a feeler from each component of $B\cap \partial D_i$ to $(L-1)\times [0,\epsilon]\times [0,2]$.  See Figure A.2.  It follows that  $\length(\partial E_i)$ is approximately twice the total length of its feelers.   If $\genus(V_0)=g$, and $K$ is degree 3, then $K$ has about $3g$ edges.  So, if $\gamma$ intersects each $D_i$ $8$ times, then the average length of a $\partial E_i$ is about $96 g$. To see this observe that $L$ is approximately $12 g$, so the average feeler is of length $6g$.  
\begin{figure}[ht]
\includegraphics[scale=0.60]{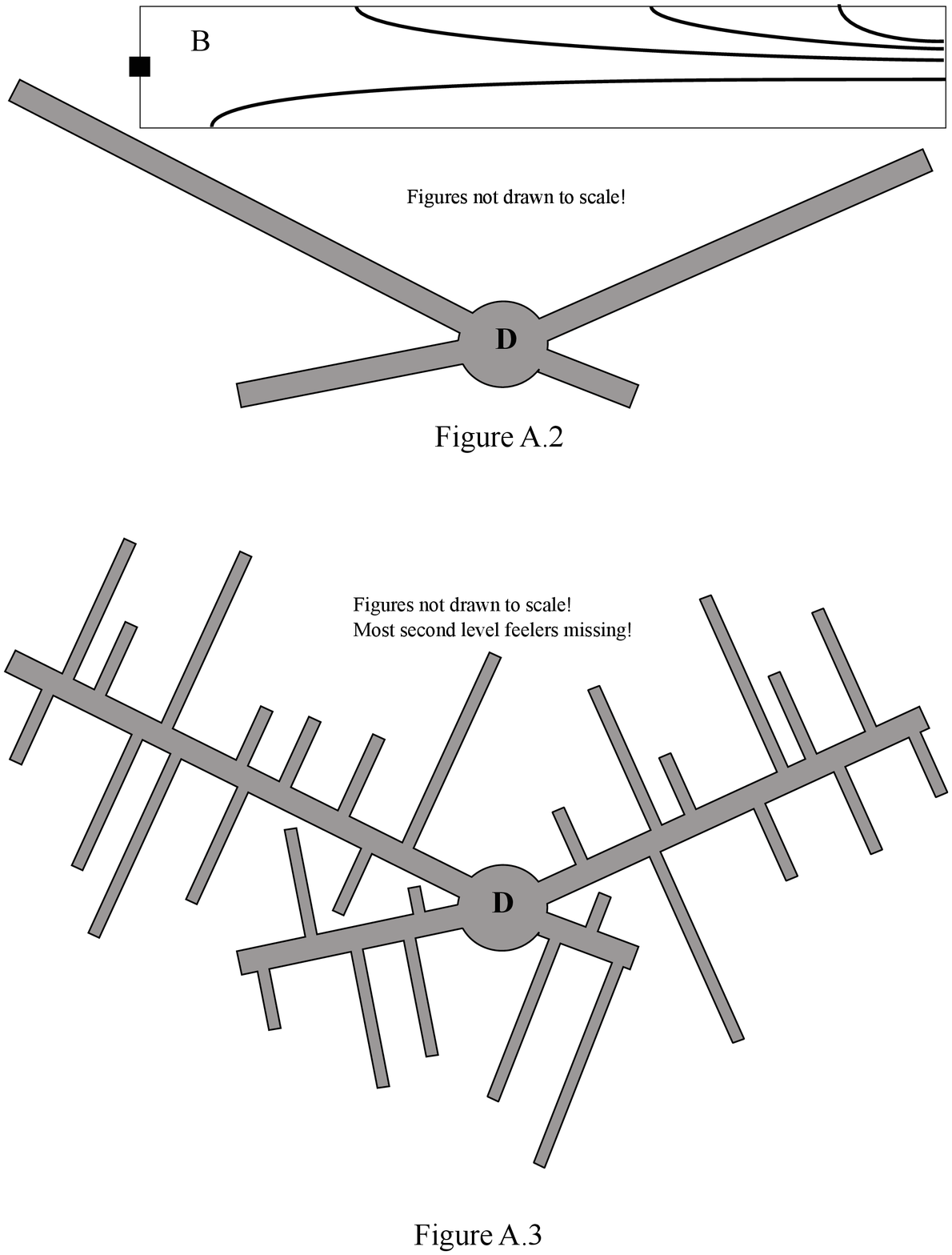}
\end{figure}

\begin{question} Is it possible to construct evenly distributed $\gamma$'s?  If so, how?\end{question}

\begin{question} For $g$ sufficiently large, does there exist a genus-g zipped up handlebody $V_1$ arising from a degree-3 graph and a length $<24 g$ zipping arc, such that if $E$ is a compressing disc, then $\length(\partial E)>10g$?\end{question}

\begin{remark}  Notice that zipping up a handlebody $V_0$  adds a small multiple of the zipping arc length to its volume.   Thus it costs relatively little, measured by volume, to go from the conventional looking handlebody to one with very complicated compressing discs.  \end{remark}

\noindent\textbf{Disclaimer.}  Here and in what follows, we are very heuristic regarding the metric which determines the volume of the handlebody and the boundary length of compressing discs.  The metric should be natural with respect to the construction, e.g. a combinatorial metric where volume is measured by the number of 3-simplices and length by intersection number with the 1-skeleton on the boundary.   We guess that if $\gamma$ goes over all the $D_i$'s many times, say approximately $r$, and $V_1$ is the zipped up handlebody, then a hyperbolic handlebody $H$ with index-$\le 1$ boundary that \emph{approximates} $V_1$ would have a fairly thick $V_0$, i.e. the radius of the $D_i$'s would be O$(\log r)$.  
\vskip 10pt

\noindent\textbf{Multi-Zipped up Handlebodies} We can generalize zipping as follows.  Let $M$ be a manifold with boundary and $\gamma\subset \partial M$ an embedded arc.  Zip up $M$ along $\gamma$ by gluing in a $B=[0,L-1]\times [0,\epsilon]\times [0,2]$  as we did with handlebody zipping.  Thus if $M$ is the handlebody $V_0$, then we can zip up $V_0$ $k$  times to obtain $V_1, V_2, \cdots, V_k$.  If $D$ is a standard compressing disc for $V_0$, then the \emph{obvious} associated compressing disc $F_2\subset V_2$ is a spiny 2-disc as in Figure A.3.

\begin{question} Fix $n\in \BN$.  For $g$ sufficiently large is it possible to construct a genus-g handlebody $V_g$ arising from a multi-zipped thickened  degree-3 graph, such that $\vol(V_g)<1000n g$ and for every compressing disc $D$, $\length(\partial D)>g^n$.\end{question}

\begin{remark}  We see this phenomena both in minimal and normal surface theory.  Suppose $M$ has the Heegaard splitting $(H_0, H_1)$ with the index-1 Heegaard surface $S$.  Let the mean convex surface $S_0$ be obtained by  pushing $S$ slightly into the $H_0$.  Suppose that $S_0$  becomes extinct after applying mean curvature flow for finite time.   Reversing the process we see $H_0$ built up from balls, 1-handles and expansion.  The expansion may \emph{correspond} to zipping.  We also see this in normal surface theory.  If $S$ is an almost normal Heegaard surface, then normalization to the  $H_0$ side collapses $H_0$ to $H'_0$ where $\partial H'_0$ is a possibly empty normal surface.  Reversing the process one may see zipping in addition to creation of 0 and 1-handles.\end{remark}  

\begin{question}  Let $\{N_i\}$ be a tower $\mT$ of covers as  in Theorem \ref{tau}.  Let $mg_i(\mT)$ be the number of distinct minimal genus Heegaard splittings of $N_i$.  What is $\liminf mg_i/d_i$ where $N_i\to N_0$ is a degree-$d_i$ cover?  Fix an irreducible Heegaard splitting $\mH$ of $N_0$.  Let $\mH_i$ denote its preimage in $N_i$ and $r_i$ denote the number of distinct irreducible Heegaard splittings of $N_i$ obtained by destabilizing $\mH_i$.  What is $\liminf r_i/d_i$?\end{question}

\noindent\textbf{Summary.} In this appendix we presented constructions of geometrically different families of handlebodies.  Through the multi-zipped handlebody construction, we offer a conjectural approach towards Question \ref{handlebody geometry}, including ideas for constructing, for $g$ sufficiently large and $n$ fixed, hyperbolic or simplicial handlebodies $V_g$ of genus $g$ with $\volume (V_g)<C_0 g$ such that all compressing discs have $\length > C_1 g^n$.

\end{document}